\documentclass{amsart}

\usepackage{todonotes}
\usepackage{amssymb,latexsym,amsthm,amsmath}
\usepackage{tikz,tikz-cd}
\usepackage{tikz-cd}

\newtheorem{theorem}{Theorem}[section]
\newtheorem{prop}[theorem]{Proposition}
\newtheorem{lemma}[theorem]{Lemma}
\newtheorem{cor}[theorem]{Corollary}

\newtheorem{defn}[theorem]{Definition}

\newtheorem{rmk}[theorem]{Remark}

\newtheorem{eg}[theorem]{Example}

\newcommand{\G}{{\Gamma}}

\newcommand{\natls}{{\mathbb N}}

\newcommand\FF{{\mathcal F}}

\newcommand\LL{{L}}
\newcommand\MM{{\mathcal M}}

\newcommand\PP{{\mathcal P}}

\newcommand\RR{{\mathcal R}}

\newcommand\PMF{{\PP\kern-2pt\MM\FF}}

\newcommand\PML{{\PP\kern-2pt\MM\LL}}

\newcommand\Z{{\mathbb Z}}
\newcommand\R{{\mathbb R}}

\renewcommand{\L}{{\Lambda}}

\newcommand\til{\widetilde}

\numberwithin{equation}{section}
\allowdisplaybreaks
\usepackage{amsfonts,amssymb,verbatim,amsmath,amsthm,latexsym,textcomp,amscd}
\usepackage{latexsym,amsfonts,amssymb,epsfig,verbatim}
\usepackage{amsmath,amsthm,amssymb,latexsym,graphics,textcomp, hyperref}
\usepackage{graphicx}
\usepackage{color}
\usepackage{url}
\usepackage{psfrag}

\usepackage{cite}
\usepackage{xcolor}
\usepackage{tikz-cd}
\usepackage{float}
\usepackage{bm}
\usepackage{mathtools}
\usepackage{graphicx}
\usepackage{enumerate}
\usepackage{enumitem}

\usepackage{amsfonts,amssymb,verbatim,amsmath,amsthm,latexsym,textcomp,amscd,hyperref}
\usepackage{latexsym,amsfonts,amssymb,epsfig,verbatim}
\usepackage{graphicx}
\usepackage{color}
\usepackage{url}
\usepackage{pgfplots}
\usepackage{tikz}
\usepackage{pdfpages}
\usepackage{subcaption}

\graphicspath{ {./images/} }

\newtheorem*{theorem*}{Theorem}
\newtheorem*{lemma*}{Lemma}
\newtheorem*{proposition*}{Proposition}
\newtheorem*{corollary*}{Corollary}
\newtheorem{theoremA}{Theorem}

\theoremstyle{definition}
\newtheorem{remark}[theorem]{Remark}

\newtheorem{question}[theorem]{Question}

\def\RR{\mathbb R_{\geq 0}}
\def\dl{\delta}

\def\pr{^\prime}

\def\ri{\rightarrow}

\def\sse{\subseteq}
\def\gm{\gamma}
\def\bt{\beta}
\def\al{\alpha}

\def\pa{\partial}
\def\map{\rightarrow}

\def\L{\mathcal{L}}

\def\Y{\mathcal{Y}}
\def\G{\mathcal{G}}

\def\R{\mathbb {R}}

\def\N{\mathbb {N}}

\def\Z{\mathbb {Z}}

\title{Finiteness of Cannon--Thurston fibers}

\subjclass[2010]{20F65, 20F67 (Primary), 57M50 (Secondary)}
\keywords{Hyperbolic metric space, hyperbolic group, Cannon--Thurston map, Metric graph bundle}

\author{Indranil Bhattacharyya}
\address{Tata Institute of Fundamental Research (TIFR), Mumbai, India}
\email{indranil38e2@gmail.com}

\author{Rakesh Halder} 
\address{Tata Institute of Fundamental Research (TIFR), Mumbai, India}
\email{rhalder.math@gmail.com}

\author{Nir Lazarovich}

\address{Department of Mathematics
	Technion
	Haifa 32000
	Israel}

\email{lazarovich@technion.ac.il}

\author{Mahan Mj}

\address{School of Mathematics, Tata Institute of Fundamental Research, Mumbai-40005, India}
\email{mahan@math.tifr.res.in}
\email{mahan.mj@gmail.com}
\urladdr{http://www.math.tifr.res.in/~mahan}
\date{\today}

\thanks{IB, RH, MM were supported by the Department of Atomic Energy,
Government of India, under project no.12-R\& D-TFR-5.01-0500 as also by an endowment of the
Infosys Foundation. NL was partially supported by the Israeli Science Foundation (grant no. 1576/23). 
Most of this work was done during a visit of NL to Tata Institute of Fundamental Research in 2026. Part of this work is also supported  by the National Science
Foundation under Grant No. DMS-1928930: MM was in residence at
the Simons Laufer Mathematical Sciences Institute in Berkeley, California, during
the Spring 2026 semester. }

\begin{document}
\begin{abstract}
Let $Y\to X$ be a proper map between proper 
hyperbolic metric spaces. 
A Cannon--Thurston map is a continuous extension $\partial Y \to \partial X$.
We prove that in most known settings in which a Cannon--Thurston map exists it is uniformly finite-to-one.
This answers a question of Swarup and generalizes previous results of Cannon--Thurston, Kapovich--Lustig,
Dowdall--Kapovich--Taylor and Ghosh.
\end{abstract}

\maketitle

\section{Introduction}
Let $M$ be a closed hyperbolic 3-manifold fibering over the circle with fiber $F$. Cannon and Thurston \cite{CTpub} proved that the inclusion map $i: \til F \to \til M$ extends continuously to $\partial i: \partial\til F \to \partial\til M$. Further, they identified point-preimages of $\partial i$ precisely in terms of stable and unstable laminations on $F$. In particular, they showed that $\partial i$ is uniformly finite-to-one. 

More generally, let $Y\to X$ be a proper map between proper hyperbolic spaces. A continuous extension $\partial Y\to \partial X$, if exists, is called a \emph{Cannon--Thurston map}. In \cite[Question 1.20]{bestvinaprob}, Swarup asked the following:

\begin{question}\label{qsn-main}
Let
	 $\Pi: X \to T$ be a tree of hyperbolic metric spaces such that the inclusion of edge spaces into vertex spaces are quasi-isometric (qi) embeddings 
	as in Bestvina--Feighn's combination theorem \cite{BF}.
	Further, assume that  $X$ is hyperbolic. Let $v$ be
	a vertex of $T$ and $(X_v , d_v)$ denote the hyperbolic vertex space
	corresponding to $v$. It was proven in \cite{mitra-trees} that $i : X_v \to X$ extends continuously to
$\partial i : \partial X_v \to \partial X$. 
 Is $\partial i_v : \partial X_v \to \partial X$ finite-to-one?
\end{question}	
The main theorem of this paper answers Question~\ref{qsn-main} affirmatively. In fact we prove the following more general statement
(see Theorems~\ref{thm-finite to one TMS ray}, \ref{thm-finite to one MB gen} and Corollaries~\ref{thm-gog vertex}, ~\ref{cor-normal}).
\begin{theoremA}\label{thm-main}
Let $X,Y$ be bounded valence graphs, satisfying one of the following:
\begin{enumerate}
    \item $X$ is the Cayley graph of a hyperbolic group; $Y$ is the Cayley graph of a normal hyperbolic subgroup. 
    \item $X$ is a hyperbolic tree of hyperbolic spaces such that the vertex spaces are uniformly hyperbolic and the inclusions of edge spaces into vertex spaces are uniformly qi embeddings
(as in Bestvina--Feighn's combination theorem \cite{BF}); $Y$ is a vertex space.
    \item $X$ is a hyperbolic metric graph bundle of uniformly hyperbolic spaces with uniformly coarse surjective barycenter maps as in Mj--Sardar \cite{pranab-mahan}; $Y$ is a fiber space. 
\end{enumerate}
Then the Cannon--Thurston map $\partial Y\to \partial X$ of the natural inclusion $Y\to X$ is uniformly finite-to-one.
\end{theoremA}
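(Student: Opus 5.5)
We reduce all three cases to the metric graph bundle setting, or more precisely to a common ``ladder'' framework. We then bound the number of leaves of an ending lamination that can share an endpoint.

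Recall the structure of Cannon--Thurston fibers. By the work of Mitra and of Mj--Sardar, two distinct points $\xi\ne\eta\in\partial Y$ have the same image under $\partial i$ if and only if the bi-infinite geodesic $[\xi,\eta]\subset Y$ is a leaf of an ending lamination $\Lambda_\zeta$ for some $\zeta\in\partial X$. Concretely, $[\xi,\eta]$ is a limit of $Y$-geodesics $\lambda_n$ whose flows (ladders) $\mathcal{L}_{\lambda_n}$ exit along a quasigeodesic ray in the base converging to $\zeta$. Equivalently, the flow of $[\xi,\eta]$ under the quasi-isometric lifts of the base ray is infinite-diameter in one direction while staying ``thin''. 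So $\partial i^{-1}(p)$ is the set of endpoints of leaves of such laminations, all sharing the common endpoint in a connected family.

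The plan has four steps.

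\emph{Step 1 (reduction).} For case (1), $1\to H\to G\to Q\to 1$ gives a metric graph bundle over $Q$ with fibers uniformly quasi-isometric to $H$, satisfying the coarse surjectivity hypotheses. For case (2), we pass to the ray (or subtree) of the Bass--Serre tree. There, the relevant portion of $X$ is a tree of spaces in which the flow of $Y$ is a metric bundle over a subtree, following Mitra's ladder construction.

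\emph{Step 2 (uniform quasiconvexity of ladders).} We use the retraction $\Pi_\lambda: X\to \mathcal{L}_\lambda$, which is coarsely Lipschitz with uniform constants. This gives a uniform $K$ such that every ladder is $K$-quasiconvex. It also shows that a leaf of an ending lamination, when flowed, stays within uniformly bounded distance of a quasi-isometric section ray.

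\emph{Step 3 (counting).} Suppose $\xi_0,\dots,\xi_N\in\partial Y$ all map to $p$. The geodesics $[\xi_0,\xi_j]$ are then all leaves, and they form an ideal polygon in $Y$. Flowing along the base direction toward the exit point $\zeta$, each leaf stays uniformly thin. We pick a point deep in the fiber over a far base point $b_n$ near $\zeta$. The flowed images of the $N$ leaves then give $N$ distinct geodesic directions in the fiber $F_{b_n}$, all contained in a ball of uniform radius $R$ around a qi section. These directions are distinguished because their endpoints separate, by the uniform flaring/hyperbolicity of $X$. Since fibers are bounded valence graphs with uniform constants, a ball of radius $R$ contains at most $C(R)$ vertices. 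Distinct ideal ``spikes'' must separate within bounded distance of the center of the polygon, so we get $N\le C(R)$.

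\emph{Step 4 (main obstacle).} The hardest step is the separation in Step 3. We must show that distinct leaves through the common endpoint $\xi_0$ remain uniformly apart, at bounded distance, in the far fibers, rather than collapsing under the flow. We would try to derive this from the flaring condition. If two leaves fellow-traveled in $F_{b_n}$ over a long segment, then pulling back would produce a long fiber segment along which the flow is qi to a product. This contradicts hyperbolicity via the Bestvina--Feighn flaring inequality, which bounds the length of such segments by a uniform constant $M$. Combining this with a pigeonhole argument on vertices in a ball of radius $M+R$ gives the uniform bound.
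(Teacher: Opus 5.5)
There is a genuine gap, and it is the step you flag yourself in Step~4. Suppose the flowed leaves $(\xi_{0,n},\xi_{j,n})$ all pass within $R$ of a common point $x_n$ of $F_{b_n}$. That alone does not bound $N$. In a $3$-regular tree, arbitrarily many ends $\xi_j$ can be chosen so that every geodesic $(\xi_0,\xi_j)$ passes through one vertex $x$, so counting vertices of $B(x_n,R)$ gives nothing. What is needed is a uniform bound on every Gromov product $\langle \xi_{j,n},\xi_{k,n}\rangle_{x_n}$, and the flaring argument you sketch does not supply it. Two leaves with the common endpoint $\xi_{0,n}$ always fellow-travel for infinite length toward that endpoint, so ``long fellow-travelling contradicts flaring'' cannot be the mechanism.

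The missing idea is that you should not privilege $\xi_0$. Argue as follows:
\begin{enumerate}
\item Every pair $\xi_j\neq\xi_k$ in the fiber spans a contracting ladder over the same ray. Its uniform $K$-qi sections converge to the common image point (Propositions~\ref{prop-flows over unique ray} and~\ref{prop-unique ray in base}, Lemma~\ref{lem-lift in the bi infinte ladder}).
\item Uniform qi sections with the same endpoint are eventually uniformly close in the fibers (Lemma~\ref{lem-lifts imp close in fiber}).
\item Since the set is finite, there is one level $n_0$ at which all the geodesics $(\xi_{j,n_0},\xi_{k,n_0})$ meet a single ball $B(x,R)$.
\item Then all pairwise Gromov products at $x$ are bounded, so the rays $[x,\xi_{j,n_0})$ leave through distinct points of a sphere of uniform radius.
\item Bounded valence and injectivity of boundary flow then bound the cardinality, as in Proposition~\ref{prop-bdd nm barycenters in bdd ball}.
\end{enumerate}
The paper packages this with barycenters of triples (Lemmas~\ref{lem-barycenter flow TMS over ray} and~\ref{lem-barycenter flow coincide}), which amounts to the same thing. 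Neither flaring nor quasiconvexity of Mitra's ladders (your Step~2) is needed.

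A second, smaller problem is the reduction in Step~1 for case (2). There the edge-to-vertex maps are only qi embeddings, so boundary flow is only partially defined, and the flow of $Y$ is not a metric bundle over a subtree. What makes the reduction to a ray work is the input (Propositions~\ref{prop-flows over unique ray} and~\ref{prop-unique ray in base}) that all points of a Cannon--Thurston fiber with at least two elements flow along one and the same ray $[u,\eta)$. One also needs that the fiber computed in $\Pi^{-1}([u,\eta))$ coincides with the fiber in $X$ (Lemma~\ref{lem-enough to count over ray}, Corollary~\ref{cor-preimages are same}). Your Step~3 tacitly assumes this when it flows all leaves to a common far fiber. Note also that the exit point $\eta$ lies in $\partial T$ (or $\partial B$), not at $\zeta\in\partial X$ as you write.
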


The existence of a Cannon--Thurston map in the cases of the theorem was proven respectively in \cite[Theorem 4.3]{mitra-ct}, \cite[Theorem 3.10]{mitra-trees} and \cite[Theorem 5.3]{pranab-mahan}.
Case (1) follows from Case (3), and generalizes previous results of Kapovich and Lustig \cite[Theorem A]{kl15}, Dowdall, Kapovich and Taylor \cite[Theorem 6.3]{dkt} and Ghosh  \cite[Theorem $4.8$]{pritam-ctfinite}.
Case (2) answers a question of Swarup \cite[Question 1.20]{bestvinaprob}.

Unlike previous approaches, we prove uniform finiteness of fibers of the Cannon--Thurston map directly without going through the description of Cannon--Thurston laminations as in \cite{kl15,dkt,mj-rafi,pritam-ctfinite}. We sketch the main idea in the case of a hyperbolic metric bundle over the ray:

\bigskip

\noindent {\bf Sketch of the main idea:}
Let $T=[0, \infty)$ be a ray with vertices at integer points $\{0\}\cup\natls$. Let $X \to T$ be a tree of (uniformly) $\delta-$hyperbolic spaces such that edge to vertex space maps are uniform quasi-isometries (and not just qi embeddings). Hence there exist uniform 
quasi-isometries $\phi_n: X_n \to X_{n+1}$. Let $\partial \phi_n: 
\partial  X_n \to \partial  X_{n+1}$ denote the induced boundary homomorphisms.
Let $\partial \Phi_n: 
\partial  X_0 \to \partial  X_{n}$ be given by $\partial \Phi_n = 
\partial \phi_{n-1} \circ \cdots\circ\partial \phi_0$.
Suppose $X$ is hyperbolic.

Let $i: X_0 \to X$ denote the inclusion of the base vertex space. Let
$\partial i: \partial X_0 \to \partial X$ denote the Cannon--Thurston map \cite{mitra-trees}.  
Let $\zeta\in \partial X$. Our goal is to uniformly bound the size  $|\partial i^{-1}(\zeta)|.$ We will do so by uniformly bounding the size of any finite subset $A\subseteq \partial i^{-1}(\zeta)$.

Let $\xi_1, \xi_2, \xi_3 \in A$ be distinct points.
 The triple of points $\partial \Phi_n(\xi_1), \partial \Phi_n(\xi_2),\partial \Phi_n(\xi_3)$ defines a coarse barycenter 
   $Bary_{X_n}(\partial \Phi_n(\xi_1), \partial \Phi_n(\xi_2),\partial \Phi_n(\xi_3))$ (Definition~\ref{barycenter map}) in each fiber $X_n$. 
  The ray $$r=r(\xi_1,\xi_2,\xi_3):\{0\}\cup \natls \to X\quad  \text{ defined by} \quad r(n) = Bary_{X_n}(\partial \Phi_n(\xi_1), \partial \Phi_n(\xi_2),\partial \Phi_n(\xi_3))$$ is a quasigeodesic ray (with uniform quasi-geodesic constants).
  Moreover, since  $\xi_1,\xi_2,\xi_3\in \partial i^{-1}(\zeta)$ we have $r(\infty) = \zeta$. 
  
  By the previous paragraph, it follows that for any two triples (of distinct points) $\xi_1,\xi_2,\xi_3$ and $\xi'_1,\xi'_2,\xi'_3$ in $A$ the rays $r=r(\xi_1, \xi_2, \xi_3)$ and $r'=r(\xi'_1, \xi'_2. \xi'_3)$ are asymptotic. 
  That is, there exists $D\ge 0$ (independent of $A$) and $N=N(\xi_1,\xi_2,\xi_3,\xi'_1,\xi'_2,\xi'_3)$ such that for all $n\ge N$, $d_{X_n} (r(n),r'(n))\le D$.
  Since $A$ is finite there exists $M\in \natls$ such that the distance between barycenters of distinct triples in 
  $\partial \Phi_M(A)$ is bounded above by $D$ in
  $\partial X_{M}$. 
  By compactness of $\partial X_M$ and bounded valence of $X_M$, it then follows that $A$ is of   bounded size (see Proposition~\ref{prop-bdd nm barycenters in bdd ball} for further detail). 
 
 To deal with the case of trees of hyperbolic metric spaces, we shall need the recent technology of boundary flows (see Section~\ref{sec-bflow}) in the context
 of trees of hyperbolic spaces developed by Kapovich and Sardar in \cite{ps-kap}.
 They generalized the more straightforward case above where the technology was already developed 
 in \cite{pranab-mahan}.

As an application we give a new and short proof of the main technical theorem of \cite{NirMaMj-commen}
(see Theorem~\ref{thm-lmm}). We also recover a theorem of Thurston \cite{thurston-slither1,calegari-dunfield} that proves that a Cannon-Thurston map exists and is finite-to-one for leaves of foliated hyperbolic 3-manifolds slithering over the circle, see Section~\ref{sec-newproofs}.\\

\noindent {\bf A historical note:} Swarup's question \cite[Question 1.20]{bestvinaprob} had two parts. The first part asked for a description of point-preimages, i.e. Cannon--Thurston laminations in the terminology of \cite{mj-rafi}. The second part is addressed by Theorem~\ref{thm-main}. The question was stated 
originally in
\cite{bestvinaprob} in the somewhat more restrictive group-theoretic setting of a finite graph of hyperbolic groups satisfying the qi-embedded condition.
The recent exhaustive monograph \cite{ps-kap} (especially Chapter 8) does furnish a description, building on related work in \cite{mitra-endlam,pranab-mahan}. However, the description in 
\cite{mitra-endlam} and \cite{ps-kap} is not enough to address Question~\ref{qsn-main}. The description of Cannon--Thurston laminations in \cite{mitra-endlam} was leveraged in the special case where one has a finitely generated free normal subgroup of a hyperbolic group \cite{kl15,dkt,pritam-ctfinite} to answer positively the analog of 
Question~\ref{qsn-main}. But this approach makes essential use of the fairly sophisticated index theory of free group automorphisms developed in \cite{chl07,ch14}. All these techniques involving laminations and index theory were inspired by Thurston's theory of pseudo-Anosov surface diffeomorphisms
and geodesic laminations.
The key new contribution of this paper is that it largely circumvents the use of any lamination-related machinery and addresses Question~\ref{qsn-main} directly in a fairly general context.\\

\noindent {\bf Organization of the paper:} In Section~\ref{sec-prel} we discuss some preliminary material on hyperbolic spaces and Cannon--Thurston maps. We single out Proposition~\ref{prop-bdd nm barycenters in bdd ball} from this section as a basic fact that is used crucially in the rest of the paper. Section~\ref{sec-treebdls} is devoted to trees of hyperbolic spaces. The main theorem of this paper, Theorem~\ref{thm-finite to one TMS ray} is proven there, answering Question~\ref{qsn-main}. Section~\ref{sec-mgb} proves the analogous result for metric graph bundles. Section~\ref{sec-further extension} provides further extensions and generalizations. 
In particular, we give a quick short proof of one of the main theorems of \cite{NirMaMj-commen}.

\section{Preliminaries}\label{sec-prel}

\subsection{Hyperbolic metric spaces and barycenter maps}

We refer the reader to \cite{gromov-hypgps},\cite[Chapter III.H]{bridson-haefliger}, \cite{abc} for basics on  hyperbolic spaces and groups and their Gromov boundaries. We assume henceforth that all spaces apart from underlying Bass-Serre trees  are   proper.
Also, for brevity of notation, we shall use $k-$qi embeddings to mean $(k,k)-$qi embeddings and $k-$quasi-geodesic to mean $(k,k)-$quasi-geodesic.

The following lemma collects together \cite[Lemmas 1.17, 3.2, 3.3, III.H]{bridson-haefliger}

\begin{lemma}\label{lem-barycenter}
Let $X$ be a proper $\dl$-hyperbolic  metric space. Let $ \pa X$ denote its boundary. 
Given  $\xi_1 \neq \xi_2\in \partial X$,  there exists a bi-infinite geodesic  $(\xi_1,\xi_2)$ in $X$ joining $\xi_1$ and $\xi_2$.

There exist $D_{\ref{lem-barycenter}}(\dl)\ge0$ and $R_{\ref{lem-barycenter}}(\dl)\ge0$ depending only on $\dl$ such that the following holds.
For any triple $\xi_1 , \xi_2 , \xi_3$ of distinct points in $\pa X$ there exists $x\in X$ such that the $D_{\ref{lem-barycenter}}(\dl)$-neighborhood of $x$ intersects  $(\xi_i, \xi_j)$  for $i\ne j$ and $i,j\in\{1,2,3\}$. Moreover, $x$ is coarsely unique, i.e.\ for any  $x'\in X$ satisfying the above condition,  $d(x,x')\le R_{\ref{lem-barycenter}}(\dl)$. We refer to $x$ as a 
 \emph{barycenter} of the triple $\Xi= (\xi_1,\xi_2,\xi_3)\in\pa^3X$.
\end{lemma}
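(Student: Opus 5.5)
The plan is to prove the three assertions of Lemma~\ref{lem-barycenter} in turn, using only thin triangles and the visual boundary of a proper geodesic space.

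\emph{Existence of bi-infinite geodesics.} Fix a basepoint $p\in X$. Choose geodesic rays $\gamma_1,\gamma_2$ from $p$ with $\gamma_i(\infty)=\xi_i$. For each $n$, pick a geodesic segment $\sigma_n=[\gamma_1(n),\gamma_2(n)]$. Since $\xi_1\neq\xi_2$, the Gromov product $(\xi_1\cdot\xi_2)_p$ is finite. By thinness of the geodesic quadrilaterals with vertices $p,\gamma_1(n),\gamma_2(n)$, every $\sigma_n$ passes within a uniform distance ($(\xi_1\cdot\xi_2)_p+O(\dl)$) of $p$. The spaces are proper, so Arzel\`a--Ascoli applies to the $\sigma_n$, reparametrized so that time $0$ is their closest point to $p$. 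A subsequence then converges uniformly on compacta to a bi-infinite geodesic $c$. Its two ends stay at bounded distance from $\gamma_1$ and $\gamma_2$, again by thinness passed to the limit, so $c(\pm\infty)=\xi_1,\xi_2$.

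\emph{Existence of a barycenter.} Given distinct $\xi_1,\xi_2,\xi_3$, take bi-infinite geodesics $(\xi_i,\xi_j)$. Ideal geodesic triangles in a $\dl$-hyperbolic space are $C\dl$-thin for a universal constant $C$. To see this, truncate the triangle far out, apply $\dl$-thinness to the resulting hexagon, and note that the truncation edges are far away. Hence each side lies in the $C\dl$-neighborhood of the union of the other two. Consider $(\xi_1,\xi_2)$ and the closed sets
\[ U_3=\{t: d(c(t),(\xi_1,\xi_3))\le C\dl\},\qquad U_3'=\{t: d(c(t),(\xi_2,\xi_3))\le C\dl\}. \]
These cover $\R$, and $U_3$ contains a neighborhood of $-\infty$ while $U_3'$ contains a neighborhood of $+\infty$. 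Because $\R$ is connected they meet, which gives a point $x\in(\xi_1,\xi_2)$ within $C\dl$ of both other sides. Setting $D_{\ref{lem-barycenter}}=C\dl$ finishes this part.

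\emph{Coarse uniqueness.} This is the main step. Suppose $x,x'$ are both $D$-close to all three sides, and suppose $d(x,x')$ is large. Project both points to nearby points $y,y'$ on $(\xi_1,\xi_2)$ and look at the subsegment $[y,y']$. Each point of the side $(\xi_1,\xi_2)$ is close to one of the other sides. Beyond the $2D$-neighborhood of $\{y,y'\}$, the side runs along $(\xi_1,\xi_3)$ on one end and along $(\xi_2,\xi_3)$ on the other. Using the tripod (insize) picture, the set of points of $(\xi_1,\xi_2)$ lying close to both other sides has diameter bounded by a function of $\dl$ and $D$. The cleanest route is to approximate the ideal triangle by a finite one with vertices far out. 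By Gromov-product estimates, for such a triangle the set of points close to all three sides has diameter $O(\delta+D)$, since the insize points are unique up to $4\dl$. This bound is independent of the truncation, so it passes to the ideal triangle and yields $R_{\ref{lem-barycenter}}(\dl)$.

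The obstacle I expect is controlling the constants uniformly during the passage from finite to ideal triangles. I would handle it by first proving all estimates for geodesic triangles, then passing to a limit using properness.
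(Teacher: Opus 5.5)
Your proposal is correct and follows essentially the paper's route. The paper gives no argument of its own; it cites the standard facts from Bridson--Haefliger III.H (visibility of $\partial X$ via Arzel\`a--Ascoli, slimness of ideal triangles, and the insize characterization of thin triangles), and your three steps reconstruct exactly those arguments.

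The one detail worth writing out is in the uniqueness step. For a sufficiently far-out truncation, a point $D$-close to all three ideal sides is $D'$-close to the sides of the truncated triangle, with $D'=D+O(\delta)$; this follows from the same slimness argument, since the discarded pieces are far from $x$ and $x'$. The elementary estimate $|d(z,a)-(b\cdot c)_a|\le 2D'$ then puts both points within $4D'$ of the same internal point, so no limiting argument via properness is needed.
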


Let $X$ be a proper $\dl$-hyperbolic geodesic metric space. Let $$(\xi_1,\xi_2,\xi_3)\in\pa^3 X:=\{(\eta_1,\eta_2,\eta_3)\in\pa X\times\pa X\times\pa X:\eta_1\ne\eta_2\ne\eta_3\}.$$ An \emph{ideal triangle} with vertices $\xi_1,\xi_2,\xi_3$, denoted by $\triangle(\xi_1,\xi_2,\xi_3)$, is a union of
bi-infinite geodesics $(\xi_i, \xi_j)$ for $i\ne j$ and $i,j\in\{1,2,3\}$. 
The following definition is motivated by Lemma \ref{lem-barycenter} (see \cite[p. $1668$]{pranab-mahan}).
\begin{defn}[Barycenter map]\label{barycenter map}
Let $X$ be a proper $\dl$-hyperbolic geodesic metric space. For any $\Xi= (\xi_1,\xi_2,\xi_3)\in\pa^3X$, let  $x$ be  a barycenter as in Lemma \ref{lem-barycenter}. We thus have a coarsely well-defined \emph{barycenter map} $Bary_X:\pa^3X\map X$  sending $\Xi$ to its barycenter. 
	
	For $D \ge 0$, a point $z \in X$ is called a $D$-barycenter of the ideal triangle $\triangle(\xi_1, \xi_2, \xi_3)$ if $z$ is $D$-close to each bi-infinite geodesic joining $\xi_i,\xi_j$ for distinct $i, j \in \{1, 2, 3\}$. Further,  $z \in X$ is called a $D$-barycenter of the triple of points $\xi_1 \neq  \xi_2 \neq  \xi_3
    \in \partial X$, if it is a $D$-barycenter of the
    ideal triangle $\triangle(\xi_1, \xi_2, \xi_3)$.
\end{defn}

 It is  a fact that any two $D$-barycenters of an ideal triangle are $D'$-close for some $D'\ge0$ depending on $D$ and $\dl$ (see \cite[Lemma 2.7]{pranab-mahan} for instance). Hence as a consequence of  stability of quasigeodesics we have the following
(the proof is the same as that of \cite[Lemma 3.5]{mitra-trees}).

\begin{lemma}\label{lem-barycenter close}
Let $\dl\ge0,L\ge1$. There exists $R\ge0$ such that the following hold.
Let $X,Y$ be  $\dl$-hyperbolic metric spaces. 
	Let $\phi:X\to Y$ be an $L$-qi embedding. Let $\pa \phi:\pa X\to\pa Y$  denote the induced topological embedding between  their boundaries \cite[Theorem $3.9$]{bridson-haefliger}.
    Let $(\xi_1,\xi_2,\xi_3)\in\pa^3 X$ and $x$ be a barycenter of $(\xi_1,\xi_2,\xi_3)$ and $y$ be a barycenter of $(\pa \phi(\xi_1),\pa \phi(\xi_2),\pa \phi(\xi_3))$. 
	Then $d_Y(\phi(x),y)\le R$.
\end{lemma}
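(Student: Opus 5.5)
The strategy is to push the three bi-infinite geodesics forward by $\phi$, straighten the resulting quasi-geodesics in $Y$, and use the defining property of $x$ (it lies near all three sides of the triangle in $X$). This will show that $\phi(x)$ is a $D$-barycenter in $Y$ for a controlled $D$. Coarse uniqueness of $D$-barycenters, quoted just before the statement (\cite[Lemma 2.7]{pranab-mahan}), then finishes the argument. All constants will depend only on $\dl$ and $L$.

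First, fix bi-infinite geodesics $(\xi_i,\xi_j)$ in $X$ for $i\ne j$. By Lemma~\ref{lem-barycenter}, $x$ lies within $D_{\ref{lem-barycenter}}(\dl)$ of each of them. Since $\phi$ is an $L$-qi embedding, each image $\phi((\xi_i,\xi_j))$ is, after connecting consecutive points by geodesic segments of length at most $2L$, a bi-infinite $L'$-quasi-geodesic in $Y$ with $L'=L'(L)$. Its endpoints are $\pa\phi(\xi_i)$ and $\pa\phi(\xi_j)$, by the definition of the induced boundary map \cite[Theorem 3.9]{bridson-haefliger}. Because $\pa\phi$ is injective, these three boundary points are distinct, so a barycenter $y$ exists.

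Next, stability of quasi-geodesics in $\dl$-hyperbolic spaces applies. Here we need its version for bi-infinite quasi-geodesics with common ideal endpoints, which follows from the finite version by a limiting argument in the proper space $Y$. It gives a constant $K=K(\dl,L)$ such that $\phi((\xi_i,\xi_j))$ lies in the $K$-neighborhood of any bi-infinite geodesic $(\pa\phi(\xi_i),\pa\phi(\xi_j))$ in $Y$. Now pick $p_{ij}\in(\xi_i,\xi_j)$ with $d_X(x,p_{ij})\le D_{\ref{lem-barycenter}}(\dl)$. Then
\[
d_Y(\phi(x),\phi(p_{ij}))\le L\,D_{\ref{lem-barycenter}}(\dl)+L,
\]
and $\phi(p_{ij})$ lies within $K$ of $(\pa\phi(\xi_i),\pa\phi(\xi_j))$. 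Hence $\phi(x)$ is a $D$-barycenter of $(\pa\phi(\xi_1),\pa\phi(\xi_2),\pa\phi(\xi_3))$ with $D=L D_{\ref{lem-barycenter}}(\dl)+L+K$.

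Finally, $y$ is itself a $D_{\ref{lem-barycenter}}(\dl)$-barycenter, hence also a $\max(D,D_{\ref{lem-barycenter}}(\dl))$-barycenter. By the quoted fact, any two such barycenters are $R$-close with $R$ depending only on $D$ and $\dl$, so $d_Y(\phi(x),y)\le R$. The only step needing care is the bi-infinite Morse lemma: the finite Morse lemma must be passed to a limit using properness and Arzelà--Ascoli. This is routine, since geodesics between ideal points already exist by Lemma~\ref{lem-barycenter}.
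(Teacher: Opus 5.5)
Your proposal is correct and follows essentially the same route as the paper. The paper also uses stability of quasi-geodesics to show that $\phi(x)$ is a $D$-barycenter of $(\pa\phi(\xi_1),\pa\phi(\xi_2),\pa\phi(\xi_3))$ with $D$ depending only on $\dl$ and $L$, and then applies the coarse uniqueness of $D$-barycenters quoted just before the statement; your treatment of the bi-infinite Morse lemma and the injectivity of $\pa\phi$ fills in details the paper leaves implicit.
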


We conclude this subsection with the following result, which gives a uniform bound on the number of boundary points such that the barycenters of all distinct triples remain within a bounded set.

\begin{prop}\label{prop-bdd nm barycenters in bdd ball}
	Given $\dl\ge0$, $D\ge0$ and $R\ge0$ there is a constant $D'$ such that the following holds. Let $X$ be a $\dl$-hyperbolic  graph such that  any vertex has valence bounded by $D$. Let $u \in X$ be any vertex.
	Let $A \subset \partial X$ be a set such that for any distinct $\xi_1,\xi_2
, \xi_3
\in A$, $Bary_X(\xi_1, \xi_2,
 \xi_3)\cap B(u;R)\ne\emptyset$.
	Then the cardinality of $A$ is bounded by $D'$.
\end{prop}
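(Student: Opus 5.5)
We would prove the statement by projecting the points of $A$ to a bounded-size set of ``directions'' seen from $u$, and then showing that two distinct points of $A$ cannot share the same direction once $|A|\ge 3$.

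The first step is a reduction. Fix a vertex $u$. For each $\xi\in\partial X$ choose a geodesic ray $\gamma_\xi$ from $u$ to $\xi$; it exists by properness, possibly after moving $u$ by a bounded amount. Let $r_0=R+C(\delta)$, where $C(\delta)$ is a constant chosen below. Define the \emph{direction} of $\xi$ to be the vertex $\gamma_\xi(r_0)$. It lies on the sphere of radius $r_0$ about $u$. Since valences are bounded by $D$, this sphere contains at most $N:=D^{r_0+1}$ vertices, and $N$ depends only on $\delta,D,R$. If we show that the direction map is injective on $A$ whenever $|A|\ge 3$, we get $|A|\le \max(N,2)=:D'$.

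The second step is the key geometric estimate. Suppose $\xi_1\ne\xi_2$ in $A$ have the same direction. Pick any third point $\xi_3\in A$ and let $x$ be the barycenter of $(\xi_1,\xi_2,\xi_3)$, so $d(x,u)\le R$ by hypothesis. By Lemma~\ref{lem-barycenter}, $x$ is $D_{\ref{lem-barycenter}}(\delta)$-close to the geodesic $(\xi_1,\xi_2)$. Hence the Gromov product $(\xi_1\cdot\xi_2)_u$ is at most $R+D_{\ref{lem-barycenter}}(\delta)+O(\delta)$. This uses the standard fact that the distance from $u$ to $(\xi_1,\xi_2)$ agrees with $(\xi_1\cdot\xi_2)_u$ up to $O(\delta)$.

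On the other hand, the rays $\gamma_{\xi_1}$ and $\gamma_{\xi_2}$ both pass through the same vertex at distance $r_0$ from $u$. By thinness of the ideal triangle with vertices $u,\xi_1,\xi_2$, this forces $(\xi_1\cdot\xi_2)_u\ge r_0-O(\delta)$. Choosing $C(\delta)$ larger than $D_{\ref{lem-barycenter}}(\delta)$ plus the two $O(\delta)$ errors gives a contradiction. Hence the direction map is injective, and the bound follows.

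The main obstacle is bookkeeping of constants in the comparison between Gromov products at infinity and distances to bi-infinite geodesics. These are standard estimates from \cite[III.H]{bridson-haefliger}, which hold up to additive errors depending only on $\delta$. One should also note that the choice of rays is arbitrary, but any choice works because the argument uses only that the two chosen rays pass through a common point. If $|A|\le 2$ there is nothing to prove.
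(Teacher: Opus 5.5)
Your proposal is correct and follows essentially the same argument as the paper. Both use the barycenters in $B(u;R)$ to bound the pairwise Gromov products $\langle \xi_i,\xi_j\rangle_u$, and then observe that geodesic rays from $u$ to distinct points of $A$ must hit distinct vertices of a sphere of slightly larger radius, whose size is controlled by the valence bound. Your explicit treatment of the case $|A|\le 2$, where no third point is available to form a barycenter, is a small detail the paper leaves implicit.
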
	

\begin{proof} Since  $Bary_X(\xi_1, \xi_2,
	\xi_3)\cap B(u;R)\ne\emptyset$ for any distinct $\xi_1 , \xi_2, \xi_3
	\in A$, it follows that the Gromov inner product
	$\langle \xi_i, \xi_j \rangle_u \leq R'$ for some $R' = R'(R, \delta)$ for all $i\ne j\in \{1,2,3\}$.
	Let $\alpha_i =[u, \xi_i)$ denote geodesic rays from $u$ to $\xi_i$.
	Let $S(u, R'+10\delta)$ denote the sphere of radius $R'+10\delta$ about $u$. 
	Since $\langle \xi_i, \xi_j \rangle_u \leq R'$, 
	$\alpha_i \cap S(u, R'+10\delta) \neq \alpha_j \cap S(u, R'+10\delta)$ for
	$i \neq j$. Hence the cardinality of $A$ is bounded by the cardinality of
	$S(u, R'+10\delta)$.
	Finally,
	since  any vertex in $X$ has valence bounded by $D$, 
	the cardinality of
	$S(u, R'+10\delta)$ is bounded by $D^{R'+10\delta}$.
	Choosing $D'=D^{R'+10\delta}$ completes the proof.
\end{proof}

\subsection{Cannon--Thurston maps}

\begin{defn}\label{CT-map}\cite{mitra-ct,mitra-trees}
	Let   $i: (Y,d_Y) \to (X,d_X)$ be a proper injective map of hyperbolic metric spaces.
	We say that  $i:Y\map X$ admits a \emph{Cannon--Thurston map} if $i$ extends continuously to a map $\partial i : \partial Y \to \partial X$.
	
	Suppose $H<G$ are hyperbolic groups. Let $\partial H, \partial G$ denote their Gromov boundaries.
	Choose a finite generating set for $G$ containing a 
	finite	generating set for $H$, so that we have an inclusion $i: \Gamma_H \to \Gamma_G$ of Cayley graphs  $\Gamma_H, \Gamma_G$ with respect to these generating sets. We say that the inclusion $H\map G$ admits a \emph{Cannon--Thurston  map} if $i: \Gamma_H \to \Gamma_G$ admits a Cannon--Thurston map.
\end{defn}

\noindent{\bf Notation.} When we want to emphasize the spaces in question, we denote the inclusion of $Y$ into $X$ by $i_{Y,X}:Y\to X$. We shall denote the corresponding Cannon--Thurston map, if it exists,  by $\pa i_{Y,X}:\pa Y\to\pa X$.

The following lemma asserts the basic fact that a composition of Cannon--Thurston maps is a Cannon--Thurston map.
\begin{lemma}\label{lem-functoriality of CT map}
Let $Z\sse Y\sse X$ be inclusions of hyperbolic graphs. (Here,  all spaces are equipped with their natural graph metrics.) Assume that the inclusions $i_{Z,Y}:Z\map Y$ and $i_{Y,X}:Y\map X$ admit Cannon--Thurston maps. Then the inclusion $i_{Z,X}:Z\map X$ admits a Cannon--Thurston map given by $\pa i_{Z,X}=\pa i_{Y,X}\circ\pa i_{Z,Y}$.\quad \qedsymbol
\end{lemma}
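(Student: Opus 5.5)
The plan is to check that $\partial i_{Y,X}\circ\partial i_{Z,Y}$ is a continuous map $\partial Z\to\partial X$ extending $i_{Z,X}$. Together, $i_{Z,X}$ on $Z$ and this composition on $\partial Z$ should form a continuous map $\bar Z = Z\cup\partial Z\to \bar X = X\cup\partial X$. By Definition~\ref{CT-map}, that is exactly a Cannon--Thurston map for $Z\to X$, and the formula $\partial i_{Z,X}=\partial i_{Y,X}\circ\partial i_{Z,Y}$ then holds by construction.

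First, I will repackage the hypotheses. By definition, $i_{Z,Y}$ admitting a Cannon--Thurston map means that $\hat i_{Z,Y}:=i_{Z,Y}\cup \partial i_{Z,Y}:\bar Z\to\bar Y$ is continuous on the compactification. Likewise, $\hat i_{Y,X}:=i_{Y,X}\cup\partial i_{Y,X}:\bar Y\to\bar X$ is continuous. Here $\bar Z,\bar Y,\bar X$ carry the usual topology of the Gromov compactification of a proper hyperbolic space.

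Second, I will compose. The composition $\hat i_{Y,X}\circ\hat i_{Z,Y}:\bar Z\to\bar X$ is continuous because it is a composition of continuous maps. On $Z$ it equals $i_{Y,X}\circ i_{Z,Y}=i_{Z,X}$, since the inclusions compose. On $\partial Z$ it equals $\partial i_{Y,X}\circ\partial i_{Z,Y}$, since $\partial i_{Z,Y}$ takes values in $\partial Y$ and $\hat i_{Y,X}$ restricts to $\partial i_{Y,X}$ there.

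The only point needing care is that a continuous extension is determined by its values on $Z$. This holds because $Z$ is dense in $\bar Z$ and $\bar X$ is Hausdorff. Hence the composition is the unique Cannon--Thurston map for $i_{Z,X}$, and no step presents a real obstacle.

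One caveat concerns a variant of the definition. If a Cannon--Thurston map is read as merely a continuous map $\partial Y\to\partial X$ (as in the introduction), without continuity on $\bar Y$, then continuity of $\partial i_{Y,X}\circ \partial i_{Z,Y}$ on $\partial Z$ is still immediate. However, the extension property needs the sequential formulation: if $z_n\to\xi\in\partial Z$, then $z_n\to\partial i_{Z,Y}(\xi)$ in $\bar Y$, and hence $z_n\to \partial i_{Y,X}(\partial i_{Z,Y}(\xi))$ in $\bar X$. This is exactly the statement that the two maps on compactifications are continuous.
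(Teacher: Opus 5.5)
Your proof is correct: composing the continuous extensions $\bar Z\to\bar Y\to\bar X$ and checking that the composite restricts to $i_{Z,X}$ on $Z$ and to $\partial i_{Y,X}\circ\partial i_{Z,Y}$ on $\partial Z$ is exactly the immediate verification the paper intends. The paper states the lemma without proof, as a basic fact. Your uniqueness remark (density of $Z$ in $\bar Z$ and Hausdorffness of $\bar X$) is a harmless addition.
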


\section{Trees of spaces}\label{sec-treebdls}
\subsection{Trees of hyperbolic metric spaces}\label{sec-tree}
We recall the notion of trees of hyperbolic metric spaces
 \cite{BF} as adapted in \cite{mitra-trees}. 

\begin{defn}\label{defn-tree-of-sps}
Let $T$ be a simplicial tree and $(X, d)$  a geodesic metric space. Let $\Pi:X\to T$ be a surjective,
	$1$-Lipschitz map. 
	 For any vertex $b\in T$, let $X_b=\Pi^{-1}(b)$ and let $d_b$ denote the length metric
	on $X_b$ induced from $X$. Also for any unoriented edge $e$ of $T$ let $m_e$ denote the midpoint of $e$.
	Let $X_e$ denote $X_{m_e}$ and let $d_e$ denote the metric $d_{m_e}$.
	
We say that $\Pi:X\map T$ is a \emph{tree of hyperbolic metric spaces satisfying the q(uasi) i(sometrically) embedded condition} if there exist $\delta_0 \geq 0$,  $L_0\geq 1$ 
and a proper function $f: \RR \ri\RR$ (i.e. $f(r)\to\infty$ as $r\to\infty$) such that the following hold:
	\begin{enumerate}
		\item  For all vertices $b\in T$ and edges $e$ of $T$, $(X_b, d_b)$
		and $(X_e, d_e)$ are $\delta_0$-hyperbolic.
		\item The inclusion maps of $(X_b, d_b)$
		and $(X_e, d_e)$ in $(X,d)$ are uniformly proper as measured by $f$,
		i.e.\ for $b$ a vertex of $T$, $u,w  \in X_b$ and $C \in \RR$, $d_X(u,w) \leq C$ implies $d_b(u,w) \leq f(C)$; similarly for $e$ an edge of $T$ and 
	$u,w  \in X_e$.
		\item For any (unoriented) edge $e$ of $T$ joining two vertices $b, b'$, there is a map
		$f_e: X_e \times[0,1]\ri\Pi^{-1}(e)\sse X$ such that the following hold.
		\begin{enumerate}
			\item $\Pi\circ f_e$ is  the projection $X_e\times [0,1]\to [0,1]$ composed with the identification $[0,1]\to e$.

			\item $f_e$ restricted to $X_e\times(0,1)$ is an isometry onto the pre-image (under $\Pi$) of the interior of $e$ equipped with the
			path metric.
			
			\item $f_e$ restricted to $X_e \times \{0\}$ and $X_e \times \{1\}$ are $L_0$-qi embeddings into $X_v$ and $X_w$ respectively. We denote these restriction maps by $f_{e,v}$ and $f_{e,w}$ respectively.
				\end{enumerate}
	\end{enumerate}
	
The constants $\dl_0$, $L_0$ and $f$ are called parameters of the tree of spaces.
\end{defn}

For the rest of this section, we assume that $\Pi:X\to T$ is a tree of hyperbolic spaces satisfying the qi embedded condition with  parameters $\dl_0,L_0,f$.
We further assume that $X$ is $\dl_0$-hyperbolic.

The first part of the following theorem was behind Swarup's question~\ref{qsn-main}.

\begin{theorem}\label{thm-existence CT TMS}\cite{mitra-trees}\cite[Theorem 8.13]{ps-kap}
Let $\Pi:X\to T$ be a tree of hyperbolic spaces satisfying the qi embedded condition. Assume that $X$ is hyperbolic. Then 
for any vertex $b$ in $T$, the inclusion $i: X_b\to X$ admits a Cannon--Thurston map. 

More generally, for
any subtree $S$ of $T$, $\Pi^{-1}(S)$  is hyperbolic (with hyperbolicity constant depending only on that of $X$)  and the inclusion $\Pi^{-1}(S)\to X$ admits a Cannon--Thurston map.
\end{theorem}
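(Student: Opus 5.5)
The plan is to follow Mitra's \emph{ladder} strategy, in the form refined by Kapovich--Sardar. I would first prove the subtree statement, since the vertex case is the special case $S=\{b\}$. By Lemma~\ref{lem-functoriality of CT map}, it also suffices to handle each piece of a chain of inclusions separately.

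\textbf{Step 1: hyperbolicity of $\Pi^{-1}(S)$.} I would build a coarsely Lipschitz retraction $\rho:X\to\Pi^{-1}(S)$. Take a vertex $v\notin S$ and let $e$ be the first edge on the tree geodesic from $v$ to $S$. On $X_v$, let $\rho$ be nearest-point projection onto the image $f_{e,v}(X_e)$, which is uniformly quasiconvex by condition (3c) and $\dl_0$-hyperbolicity. Then push across $e$ via $f_e$ and repeat until reaching $\Pi^{-1}(S)$.

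The key fact to establish is \emph{uniform} coarse Lipschitzness, meaning $d(\rho x,\rho y)\le C$ whenever $d(x,y)\le 1$, with $C$ independent of the number of hops. For $x,y$ in the same or adjacent vertex spaces, I would compare the two chains of projections. Nearest-point projection in a $\dl_0$-hyperbolic space onto a quasiconvex set is $1$-Lipschitz up to an additive constant, and it \emph{contracts} once the points are far from the set. Using this, I expect to show that the additive errors do not accumulate: after the first hop, the two images are already uniformly close, and further projections keep them close.

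Granting this, $\Pi^{-1}(S)$ with its path metric is qi embedded in $X$, by the standard fact that a coarse Lipschitz retraction of a path-connected subset gives a qi embedding. Hence it is hyperbolic with constant controlled by $\dl_0,L_0,f$ and the hyperbolicity constant of $X$.

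\textbf{Step 2: ladders and the Cannon--Thurston criterion.} Since $\Pi^{-1}(S)$ is qi embedded, its inclusion into $X$ trivially has a Cannon--Thurston map (the boundary embedding of \cite[Theorem 3.9]{bridson-haefliger}). So the real content is the vertex space case $X_b\to X$.

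For a geodesic $\lambda\subset X_b$, I would form the ladder $L_\lambda$. Its slice in each $X_v$ is a geodesic obtained by projecting $\lambda$ successively onto edge images and flowing across edges, stopping where the projected geodesic becomes short. The same projection maps as in Step 1 give a coarse Lipschitz retraction $X\to L_\lambda$. Hence $L_\lambda$ is uniformly quasiconvex in $X$, and a geodesic in $X$ joining the endpoints of $\lambda$ lies in a uniform neighbourhood of $L_\lambda$.

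I would then verify Mitra's criterion. Suppose $d_b(p,\lambda)\ge M$ for a basepoint $p\in X_b$. Then $d_X(p,L_\lambda)\ge g(M)$ for some proper function $g$. To see this, any point of $L_\lambda$ in $X_v$ arises from a point of $\lambda$ via a "flow path" through the slices. A short path in $X$ from $p$ to $L_\lambda$ could then be converted, using the uniform properness $f$ of condition (2), into a short path in $X_b$ from $p$ to $\lambda$. This criterion yields the continuous extension $\pa i$.

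\textbf{Main obstacle.} The hard part is the uniform, hop-independent Lipschitz control of the iterated projections, together with the properness of flow paths in Step 2. I would attack it by proving a single "one-hop contraction" lemma: the composite of projection onto $f_{e,v}(X_e)$ with transfer through $f_e$ moves any pair of $1$-close points to a $C$-close pair, and sends pairs that are already $C$-close to $C$-close pairs. I expect the quasi-isometry constants $L_0$ and the hyperbolicity constant of $X$ to enter only at this stage.
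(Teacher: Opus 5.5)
\textbf{The gap is in Step 1, and it is fatal for the subtree half of the statement.} Your retraction $\rho$ is not uniformly coarsely Lipschitz, and $\Pi^{-1}(S)$ is in general \emph{not} qi embedded in $X$.

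Your own special case $S=\{b\}$ already exposes this. Step 1 would then make $X_b$ qi embedded in $X$, so $\partial i$ would be a topological embedding and Step 2 would be superfluous. Now take the universal cover of a closed hyperbolic $3$-manifold fibering over the circle, viewed as a tree of spaces over $\mathbb{R}$ with vertex spaces $\widetilde F$. There the fiber is exponentially distorted, and the Cannon--Thurston map $S^1\to S^2$ is surjective. Half-spaces $\Pi^{-1}([0,\infty))$ are exponentially distorted as well, because leaf segments of one invariant foliation shrink only on one side. If $\Pi^{-1}(S)\to X$ were always a qi embedding, Theorem~\ref{thm-subtree of spaces} would be trivial.

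The failure is in your ``one-hop contraction lemma.'' Nearest-point projection onto $f_{e,v}(X_e)$ contracts only the part of a displacement transverse to that set. Two adjacent points already on $f_{e,v}(X_e)$ are not moved at all. The transfer $f_{e,w}\circ f_{e,v}^{-1}$ is merely an $L_0$-qi embedding, so it can stretch their distance by a factor of order $L_0^2$. Over $n$ hops the error compounds to order $L_0^{2n}$, which is exactly the exponential distortion in the fibered example. So ``$C$-close pairs go to $C$-close pairs'' is false. With it go both the hyperbolicity of $\Pi^{-1}(S)$ and the claim that $\Pi^{-1}(S)\to X$ ``trivially'' has a Cannon--Thurston map.

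\textbf{What is needed instead.} The paper only quotes these results from Mitra and Kapovich--Sardar; here is what they supply.

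Hyperbolicity of $\Pi^{-1}(S)$ comes from flaring, not from a retraction. Hyperbolicity of $X$ forces the Bestvina--Feighn hallways-flare condition. A $k$-qi section over a geodesic of $S$ inside $\Pi^{-1}(S)$ is still a $k$-qi section in $X$, because $\Pi$ is $1$-Lipschitz. Flaring is measured in the intrinsic fiber metrics $d_v$, so it passes to $\Pi^{-1}(S)$, and the combination theorem then gives uniform hyperbolicity.

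The Cannon--Thurston map for $\Pi^{-1}(S)\to X$ is a genuine theorem. It needs a Mitra-type argument for geodesics of $\Pi^{-1}(S)$, which need not lie in a single vertex space; that is the content of the cited Kapovich--Sardar result.

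Your Step 2 for the vertex case is Mitra's argument and is sound in outline, but the retraction onto $L_\lambda$ must \emph{not} be built from Step 1's maps. Mitra defines it fiberwise: on $X_v$ it is nearest-point projection onto the slice $\lambda_v$, and fibers beyond the support of the ladder are collapsed to a bounded set. Coarse Lipschitzness is then checked one edge at a time, using the fact that nearest-point projections onto geodesics coarsely commute with qi embeddings. Because the target is itself flowed along, no long composite of projections ever appears. That, not contraction, is what stops error accumulation.

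With this retraction, or with Mitra's qi sections inside the ladder (which your flow-path argument implicitly needs), you get quasiconvexity of $L_\lambda$. You also get the estimate $d_X(p,L_\lambda)\le D\Rightarrow d_b(p,\lambda)\le f(C(D+1))$, and together these give Mitra's criterion.
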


\begin{defn}[QI sections]\label{defn-lift} Let
 $T\pr$ be a subtree of $T$ and $k\geq 1$.
A $k$-{\em qi section} of $T\pr$ is a  $k$-qi embedding $s:T\pr\ri X$ such that
$\Pi\circ s$ is the identity map on the vertex set $V(T\pr)$ of $T\pr$.
\end{defn}

\begin{lemma}\label{lem-lifts imp close in fiber} Let $\Pi: X \to T$ be a tree of hyperbolic metric spaces satisfying the qi-embedded condition such that $X$ is hyperbolic.
 Given $k\ge1$, there is $R_{\ref{lem-lifts imp close in fiber}}=R_{\ref{lem-lifts imp close in fiber}}(k,\dl_0)\ge0$ satisfying the following. \\
 Let $T\pr \subset T$ be any geodesic ray in $T$ with vertices at $n\in\{0\}\cup\N$.
 Denote the vertex space over $n\in\{0\}\cup\N$ by $X_n$.
Let $\al$ and $\bt$ be two $k$-qi sections of $T\pr$ converging to the same point in $\pa X$. Then there exists $N \in \natls$ such that for  $n\geq N$, $d_{X_n}(\al(n),\bt(n))\le R_{\ref{lem-lifts imp close in fiber}}$.
\end{lemma}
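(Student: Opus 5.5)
We will argue in $X$ using the stability of quasigeodesics, and then transfer the resulting bound to the fiber metric via the uniform properness function $f$.

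\emph{Step 1: the sections are uniform quasigeodesic rays.} Since $T'$ is a geodesic ray in $T$ and $\al,\bt$ are $k$-qi embeddings, the maps $n\mapsto\al(n)$ and $n\mapsto\bt(n)$ are $k$-quasigeodesic rays in the $\dl_0$-hyperbolic space $X$. They converge to the same point $\zeta\in\pa X$. By the standard fact that asymptotic quasigeodesic rays are eventually uniformly close, there are $C=C(k,\dl_0)$ and $N_0$ with the following property. For every $n\ge N_0$ there is $m=m(n)$ with $d_X(\al(n),\bt(m))\le C$.

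\emph{Step 2: synchronize the parameters.} Since $\Pi$ is $1$-Lipschitz and $\Pi(\al(n))=n$, $\Pi(\bt(m))=m$, we get $|n-m|\le C$. Hence
\[
d_X(\al(n),\bt(n))\le C+d_X(\bt(m),\bt(n))\le C+k|m-n|+k\le C+kC+k=:C_1 .
\]

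\emph{Step 3: from $d_X$ to the fiber metric.} The points $\al(n)$ and $\bt(n)$ both lie in $X_n$, and $d_X(\al(n),\bt(n))\le C_1$. Condition (2) of Definition~\ref{defn-tree-of-sps} then gives $d_{X_n}(\al(n),\bt(n))\le f(C_1)$. We set $R_{\ref{lem-lifts imp close in fiber}}=f(C_1)$ and $N=N_0$.

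\emph{Main obstacle.} The only genuine input is Step 1. It must hold with a constant independent of the rays, since only $N$ may depend on them. To prove it, I would use the Morse lemma to replace both quasirays by geodesic rays $\gamma_\al,\gamma_\bt$ within uniform Hausdorff distance $M(k,\dl_0)$. The rays $\gamma_\al,\gamma_\bt$ have the same endpoint. Their starting points are joined by a segment of some length $\ell$, and thin ideal triangles then show they are eventually $8\dl_0$-close beyond distance about $\ell$. This makes the closeness constant uniform and pushes all the dependence on $\al,\bt$ into $N$.

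The dependence of $R_{\ref{lem-lifts imp close in fiber}}$ on $f$ is implicit, since $f$ is a fixed parameter of the tree of spaces, consistent with the statement's $R_{\ref{lem-lifts imp close in fiber}}(k,\dl_0)$.
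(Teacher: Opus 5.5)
Your proposal is correct and follows the paper's proof step for step. You get uniform fellow-travelling of the asymptotic $k$-quasigeodesic rays in $X$, synchronize parameters via the $1$-Lipschitz map $\Pi$ to obtain $d_X(\alpha(n),\beta(n))\le C+kC+k$, and then pass to the fiber metric using the properness function $f$. Your Morse-lemma and thin-triangle justification for the uniformity of $C$ in Step 1 is a detail the paper simply takes for granted.
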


\begin{proof} Since $\al$ and $\bt$ are $k$-quasigeodesics converging to the same point in $\pa X$, there exists 
	$D\ge0$ depending on $k$ and $\dl_0$ and $N \in \natls$ satisfying the following: for  $n\geq N$,
 there exists $m\in\N$ such that $d_X(\al(n),\bt(m))\le D$. Since $\Pi$ is 1-Lipschitz, $|n-m|=d_T(\Pi(\al(n)),\Pi(\bt(m)))\le D$. By the triangle inequality, $d_X(\al(n),\bt(n))\le d_X(\al(n),\bt(m))+d_X(\bt(m),\bt(n))\le kD+k+D$. Since vertex spaces are $f$-proper embedding in $X$, $d_{X_n}(\al(n),\bt(n))\le f(kD+k+D)$. Setting $f(kD+k+D)=:R_{\ref{lem-lifts imp close in fiber}}(k)$ completes the proof.
\end{proof}

\subsection{Boundary flows}\label{sec-bflow}
Following \cite[Definition 4.3]{ps-conical} and \cite[Section 3.3.4]{ps-kap}, we now define the {\em boundary flow} of points in the boundary of vertex spaces as follows.
\begin{defn}\label{defn-bflow} Let $\Pi: X \to T$ be a tree of hyperbolic metric spaces.
	Let $e=[u,v]$ be an edge of $T$. Let $f_{e,u}, f_{e,v}$ be the qi-embeddings
 of $X_e$ into $X_u, X_v$ respectively. Let 
 $\partial f_{e,u}:\partial X_{e}\to \pa X_u$ and $\partial f_{e,v}:\partial X_{e}\to \pa X_v$ denote the induced embeddings of the boundaries. 
 If $\xi_u\in \partial X_u$ is in
	the image of $\partial f_{e,u}$, then
	$\xi_v:=\partial f_{e,v} \circ (\partial f_{e,u})^{-1}(\xi_u)$ is called the boundary
	flow of $\xi_u$ to  $\partial X_v$. We let $\pa\Phi_{uv}$ denote the resulting
	\emph{partially defined map} from (possibly a subset of) $ \pa X_u$ to
	$ \pa X_v$ so that $\pa\Phi_{uv}(\xi_u) = \xi_v$.
	
Let $u,v\in V(T)$ be any two vertices. Let  $u=u_0,u_1, \cdots, u_n=v$ be the
	consecutive vertices on the geodesic between $u, v$ in $T$. Let $\xi_0\in \partial X_u$. We say that
	$\xi_n\in \partial X_v$ is the boundary flow of $\xi_0$ to $\partial X_v$ if there exist (necessarily unique)
 $\xi_i\in \partial X_{u_i}$, $1\leq i\leq n-1$ such that $\xi_i$ is the boundary flow
	of $\xi_{i-1}$ to $\partial X_{u_i}$ for all $1\leq i\leq n$.
	We let $\pa\Phi_{uv}$ denote the resulting
	\emph{partially defined map} from (possibly a subset of) $ \pa X_u$ to
	$ \pa X_v$ so that $\pa\Phi_{uv}(\xi_u) = \xi_v$.
\end{defn}

Let $e=[u,v]$ be an edge of $T$ as above. Let $\al_u$ be a bi-infinite geodesic  in $X_u$ such that  $\al_u(\pm\infty)$ admit a boundary flow to $\partial X_v$. Let $\al_e$, $\al_v$ be bi-infinite geodesics in $X_e$ and $X_v$ respectively such that $\pa f_{e,u}(\al_e(\pm\infty))=\al_u(\pm\infty)$ and $\pa f_{e,v}(\al_e(\pm\infty))=\al_v(\pm\infty)$. 

\begin{defn}[Bi-infinite Ladder]\label{def-ladder}
Let $u\in V(T)$. Suppose $\al=\alpha_u$ is a bi-infinite geodesic  in $X_u$ joining $\xi$ and $\xi'$ in $\pa X_u$. Let $[u,\eta)\sse T$ be a geodesic ray in $T$ where $\eta\in\pa T$. Suppose further that  $\pa\Phi_{uv}(\xi)=\xi_v$, $\pa\Phi_{uv}(\xi^{'})=\xi'_v$ are defined  for all vertices $v$ in $[u,\eta)$. Let $\alpha_v$ be a bi-infinite geodesic joining $\xi_v$ and $\xi'_v$  in $ X_v$respectively. Then $$\L(\xi,\xi'):=\bigcup_{v\in [u,\eta)} \alpha_v$$
is called a \emph{bi-infinite ladder  over the ray $[u,\eta)$} corresponding to $\alpha$ $(=\alpha_u)$.
\end{defn}

Since each $\alpha_v$ is 
 coarsely defined, so is $\L(\xi,\xi')$.

\begin{lemma}\label{lem-lift in the bi infinte ladder}
	There exists $K \geq 1$ such that the following holds.
Let $\L(\xi,\xi')$ be a bi-infinite ladder over a geodesic ray $[u,\eta)\sse T$
corresponding to $\alpha \subset X_u$. Then for any $x\in\L(\xi,\xi')$ there is a $K$-qi section of $[u,\eta)$ in $X$ whose image lies in $\L(\xi,\xi')$.
\end{lemma}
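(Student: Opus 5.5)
The plan is to build the section one vertex at a time along the ray $[u,\eta)$, starting from the given point $x$ and moving in both directions. Write the vertices of the ray as $u=v_0,v_1,v_2,\dots$, and let $e_n=[v_n,v_{n+1}]$. Suppose $x\in\alpha_{v_m}$. I will define $s(v_m)=x$ and then choose points $s(v_n)\in\alpha_{v_n}$ for all $n$ so that $d_X(s(v_n),s(v_{n+1}))\le C$, with $C$ depending only on the parameters $\delta_0,L_0,f$. Once this is done, $s$ is a $C$-Lipschitz map on vertices. Since $\Pi$ is $1$-Lipschitz, we also get $d_X(s(v_a),s(v_b))\ge |a-b|$. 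Extending over edges by the paths $f_e(\cdot\times[0,1])$ then gives a $K$-qi section whose vertex values lie in $\L(\xi,\xi')$. The images of the edge interiors have length $1$, and I will regard them as in the ladder up to bounded distance, or adjust $K$ to absorb this.

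\textbf{The one-step move.} Fix an edge $e=[v_n,v_{n+1}]$. Let $\alpha_e$ be a bi-infinite geodesic in $X_e$ joining the preimages of the endpoints of $\alpha_{v_n}$. These preimages exist because the boundary flow is defined along the whole ray.

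The maps $f_{e,v_n}$ and $f_{e,v_{n+1}}$ are $L_0$-qi embeddings, so $f_{e,v_n}(\alpha_e)$ is an $L_0$-quasigeodesic in the $\delta_0$-hyperbolic space $X_{v_n}$ with the same endpoints as $\alpha_{v_n}$. By stability of quasigeodesics (the bi-infinite version), the two lie within Hausdorff distance $H=H(\delta_0,L_0)$ of each other. The same holds for $f_{e,v_{n+1}}(\alpha_e)$ and $\alpha_{v_{n+1}}$.

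Now take $y\in\alpha_{v_n}$. Choose $z\in\alpha_e$ with $d_{v_n}(y,f_{e,v_n}(z))\le H$, and then choose $y'\in\alpha_{v_{n+1}}$ with $d_{v_{n+1}}(f_{e,v_{n+1}}(z),y')\le H$. The segment $f_e(\{z\}\times[0,1])$ has length $1$, so
\[
d_X(y,y')\le 2H+1=:C.
\]
Used forward, this gives $s(v_{n+1})$ from $s(v_n)$ for $n\ge m$. Used backward (the argument is symmetric in the two ends of the edge), it gives $s(v_{n})$ from $s(v_{n+1})$ for $n<m$. The ray $[v_0,v_m]$ is a finite segment, so the backward direction causes no trouble.

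\textbf{Checking the qi bounds and the main obstacle.} The lower bound comes from $\Pi$ being $1$-Lipschitz together with $\Pi\circ s=\mathrm{id}$ on vertices. The upper bound comes from the triangle inequality applied to the consecutive $C$-steps. Hence $s$ is a $(C,C)$-quasi-isometric embedding of the ray, and $K=\max(C,1)$ plus a small constant works.

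The only delicate point is the uniform Hausdorff bound $H$, which is where the coarseness of the choices of $\alpha_v$ enters. Here I invoke the Morse lemma for bi-infinite quasigeodesics in proper $\delta_0$-hyperbolic spaces with common endpoints at infinity. Its constant depends only on $\delta_0$ and $L_0$, not on $n$ or on the particular geodesics, and this is exactly the uniformity needed for $K$.
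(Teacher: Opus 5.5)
Your proposal is correct and follows the paper's proof: start at $x\in\alpha_{v_m}$, step to a uniformly close point of the adjacent rung in both directions along the ray, and get the lower qi bound from $\Pi$ being $1$-Lipschitz. Your edge-space and quasigeodesic-stability argument supplies the justification for the uniform closeness of consecutive rungs, which the paper asserts without proof. As in the paper, you can define the section on vertices only (or extend it piecewise-constantly over edges), so the edge-interior issue you raise does not arise.
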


\begin{proof}
Let $u=v_0,v_1,v_2\dots$ be the vertices on $[u,\eta)$ such that $d_T(u,v_n)=n$. We use the notation from Definition~\ref{def-ladder}.
Suppose $x\in \alpha_{v_n}=\L(\xi,\xi')\cap X_{v_n}$ for some vertex $v_n$ in $[u,\eta)$. Inductively we define $\sigma:[u,\eta)\to X$ as follows. Set $\sigma(v_n)=x$. There exists a uniform $K$ such that $x$ is  $K$-close to $\alpha_{v_{n\pm1}}$ in $X$. We choose $x_{n\pm1}\in \alpha_{v_{n\pm1}}$ such that $d_X(x,x_{n\pm1})\le K$ and define $\sigma(v_{n\pm1})=x_{n\pm1}$ wherever $n\pm1$ is defined. 
Since $\Pi$ is $1$-Lipschitz, $\sigma$ is a $K$-qi embedding of $[u,\eta)$. 
\end{proof}

The following result says that if we have a boundary flow of three points over a geodesic ray, then the barycenters of the flowed boundary points in vertex spaces form a qi section of that geodesic ray. 

\begin{lemma}\label{lem-barycenter flow TMS over ray}
	Let $\Pi: X \to T$ be a tree of hyperbolic spaces with 
	$T=[0,\infty)$ equipped with vertices at the integer points. 	There exists $K_{\ref{lem-barycenter flow TMS over ray}}\ge1$ depending on the parameters of the tree of spaces such that the following hold. \\
	Let $\xi_{1,0},\xi_{2,0},\xi_{3,0}\in\pa X_0$ be three distinct points such that all of them have boundary flow to $\pa X_n$ for all  $n$. Let  $\pa \Phi_{0n}(\xi_{i,0})=\xi_{i,n}$, 
	$\Xi_n=(\xi_{1,n},\xi_{2,n},\xi_{3,n})$ and  $\sigma(n)=Bary_{X_{n}}(\Xi_n)$,
	Then  $\sigma$ is a $K_{\ref{lem-barycenter flow TMS over ray}}$-qi section of $T$. 
\end{lemma}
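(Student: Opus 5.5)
To prove Lemma~\ref{lem-barycenter flow TMS over ray}, it suffices to show that $d_X(\sigma(n),\sigma(n+1))$ is bounded by a constant $C$ depending only on the parameters $\dl_0,L_0,f$. Indeed, $\Pi$ is $1$-Lipschitz and $\Pi\circ\sigma$ is the identity on vertices, so
$$|m-n|\le d_X(\sigma(m),\sigma(n))\le C|m-n|.$$
This makes $\sigma$ a $C$-qi embedding, hence a qi section. So the whole task reduces to a bound on consecutive barycenters, which I would obtain by passing through the edge space.

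Fix $n$ and let $e=[n,n+1]$. Because each $\xi_{i,n}$ flows to $\pa X_{n+1}$, there are distinct points $\eta_1,\eta_2,\eta_3\in\pa X_e$ with $\pa f_{e,n}(\eta_i)=\xi_{i,n}$ and $\pa f_{e,n+1}(\eta_i)=\xi_{i,n+1}$. The $\eta_i$ are distinct since $\pa f_{e,n}$ is injective. Let $y\in X_e$ be a barycenter of $(\eta_1,\eta_2,\eta_3)$, given by Lemma~\ref{lem-barycenter} since $X_e$ is $\dl_0$-hyperbolic.

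Now apply Lemma~\ref{lem-barycenter close} to the $L_0$-qi embedding $f_{e,n}:X_e\to X_n$. It gives $d_{X_n}(f_{e,n}(y),\sigma(n))\le R$, where $R=R(\dl_0,L_0)$. Applying the same lemma to $f_{e,n+1}$ gives $d_{X_{n+1}}(f_{e,n+1}(y),\sigma(n+1))\le R$. Here one should note that barycenters are only coarsely defined; any choice differs by at most $R_{\ref{lem-barycenter}}(\dl_0)$, which can be absorbed into the constant. Inclusions of vertex spaces into $X$ do not increase distance, since $d_n$ is the induced length metric. Finally, $f_e$ is an isometry on $X_e\times(0,1)$, so passing to the limit gives $d_X(f_{e,n}(y),f_{e,n+1}(y))\le 1$. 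The triangle inequality then yields
$$d_X(\sigma(n),\sigma(n+1))\le 2R+1,$$
and one can take $K_{\ref{lem-barycenter flow TMS over ray}}=2R+1$, plus the coarse-uniqueness slack.

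I see no real obstacle. The only points needing care are these: checking that the points $\eta_i$ are distinct, so that the triple lies in $\pa^3 X_e$; recognising that Lemma~\ref{lem-barycenter close} requires uniform hyperbolicity of both $X_e$ and the vertex spaces, which holds with constant $\dl_0$ by Definition~\ref{defn-tree-of-sps}; and the point that the hyperbolicity of $X$ itself is not needed for this step.
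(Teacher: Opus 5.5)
Your proof is correct and follows the paper's argument: you reduce to bounding $d_X(\sigma(n),\sigma(n+1))$ using the $1$-Lipschitz projection, apply Lemma~\ref{lem-barycenter close} to both qi embeddings $f_{e,n}$ and $f_{e,n+1}$ of the edge space, and join the two images by the unit-length path $f_e(\{y\}\times[0,1])$. One small correction: the $\eta_i$ are distinct simply because the $\xi_{i,n}$ are (equal $\eta_i$ would have equal images), whereas injectivity of $\pa f_{e,n+1}$ is what makes the $\xi_{i,n+1}$ distinct, so by induction every $\Xi_n$ lies in $\pa^3X_n$ and $\sigma(n)$ is defined.
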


\begin{proof}
	Since $\Pi$ is $1$-Lipschitz, it suffices to show that $d_X(\sigma(n),\sigma({n+1}))$ is uniformly bounded for all $n$. This follows  from applying Lemma \ref{lem-barycenter close} separately
	to the two qi-embeddings of the edge space $X_{n,n+1}$ into $X_{n}$ and
	$X_{n+1}$. 
\end{proof}

The next notion and the following Proposition by Kapovich--Sardar connect Cannon--Thurston maps with bi-infinite ladders. 
\begin{defn}\label{def-contractingladder}
	A bi-infinite ladder $\L(\xi_1,\xi_2)$ 
	 is said to be $(K,C)$-contracting if for all $n,m\in\Z$, there are $K$-qi sections, $\gm_{n}$, $\gm_m$ say, of a segment $[u,v]\sse [u,\eta)$ through $\al(n)$ and $\al(m)$ respectively such that  $\gm_n$, $\gm_m$ are
	 contained in $\L(\xi_1,\xi_2)$ and $$d_{X_{v}}(\gm_n(v),\gm_m(v))\le C.$$
	
	We say that $\L(\xi_1,\xi_2)$ is  contracting  if it is $(K,C)$-contracting  for some $K\ge1$ and $C\ge0$.
\end{defn}

\begin{prop}\textup{(\cite[Propositions 8.56,~8.63]{ps-kap})}\label{prop-flows over unique ray}
	With the setup and notation of Lemma \ref{lem-lift in the bi infinte ladder}, 
	assume further that $X$ is hyperbolic.
	Let $u\in V(T)$. Let $\pa i:\pa X_u\to\pa X$ be  the Cannon--Thurston map
	furnished by Theorem~\ref{thm-existence CT TMS}. Let $\xi_1\in\pa X_u$ be such that $\pa i^{-1}(\pa i(\xi_1))$ has more than one point. Then there exists a unique geodesic ray $[u,\eta)\sse T$, with $\eta\in\pa T$ satisfying the following. For any $\xi_1\ne \xi_2\in\pa i^{-1}(\pa i(\xi_1))$, a
	contracting ladder $\L(\xi_1,\xi_2)$ over $[u,\eta)$ exists.
	Further, if  $\sigma: [u,\eta) \to \L(\xi_1,\xi_2)$ is a qi section, then its 
	ideal end-point $\sigma (\infty)$ equals
 $\pa i(\xi_1)$.
\end{prop}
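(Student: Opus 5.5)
The plan is to reduce everything to Mitra's ladder construction \cite{mitra-trees} together with the Cannon--Thurston criterion: $\pa i(\xi_1)=\pa i(\xi_2)$ if and only if the $X$-distance from a base point to $i(\alpha)$ tends to infinity along both ends of the geodesic $\alpha=(\xi_1,\xi_2)\subset X_u$.

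\textbf{Step 1: the ladder and its qi sections.} Fix $\xi_1\ne\xi_2$ in $\pa i^{-1}(\zeta)$, where $\zeta=\pa i(\xi_1)$, and let $\alpha=(\xi_1,\xi_2)\subset X_u$. First I build Mitra's ladder $\mathcal{B}_\alpha$ over a subtree $T_\alpha\ni u$. Its fibres are geodesics $\alpha_v\subset X_v$, obtained by successively quasi-projecting onto the images of edge spaces. Two standard properties will be used.
\begin{enumerate}
\item $\mathcal{B}_\alpha$ is uniformly quasiconvex in $X$, via a coarse Lipschitz retraction.
\item Through every point of $\mathcal{B}_\alpha$ there passes a $K$-qi section of the subtree, contained in $\mathcal{B}_\alpha$, as in Lemma~\ref{lem-lift in the bi infinte ladder}.
\end{enumerate}

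\textbf{Step 2: producing the ray $[u,\eta)$.} For $n\in\natls$, take $K$-qi sections $\gamma_n,\gamma_{-n}$ through $\alpha(n),\alpha(-n)$. Let $v_n$ be the vertex where these two sections first become $C$-close in the fibre, with $C$ a fixed large constant; such a vertex exists because flowing the segment $[\alpha(-n),\alpha(n)]$ shrinks it. Since $\xi_1,\xi_2$ have the same CT image, the geodesic in $X$ from $\alpha(-n)$ to $\alpha(n)$ avoids the ball $B(i(\alpha(0)),r_n)$ with $r_n\to\infty$. This geodesic lies near $\mathcal{B}_\alpha$, and it is tracked by the ``hull'' formed by $\gamma_{\pm n}$ together with the segments between them. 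If the $v_n$ stayed at bounded distance from $u$, this hull would pass near $i(\alpha(0))$, a contradiction. Hence $d_T(u,v_n)\to\infty$.

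Next, the segments $[u,v_n]$ are nested up to bounded error: for $m>n$ the sections for $\pm m$ must pass through the region where the sections for $\pm n$ merge, by a tripod/thin-triangle argument in $X$. So the $[u,v_n]$ converge to a geodesic ray $[u,\eta)$.

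\textbf{Step 3: the ladder over $[u,\eta)$ is contracting.} Over every vertex $v$ of this ray, $\alpha_v$ is bi-infinite, since the edge-space images keep containing arbitrarily long subsegments. Therefore $\xi_1,\xi_2$ flow along $[u,\eta)$, and the restriction of $\mathcal{B}_\alpha$ to the ray is, coarsely, the bi-infinite ladder $\L(\xi_1,\xi_2)$. The merging of $\gamma_n$ and $\gamma_m$ (for arbitrary $n,m$) at some vertex of the ray is exactly the $(K,C)$-contracting property of Definition~\ref{def-contractingladder}.

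\textbf{Step 4: endpoints of qi sections.} Any qi section $\sigma$ of $[u,\eta)$ inside $\L(\xi_1,\xi_2)$ eventually stays within a bounded fibre distance of $\gamma_{\pm n}$, because of contraction and Lemma~\ref{lem-lifts imp close in fiber}-type estimates. Letting $n\to\infty$, the section $\gamma_n$ converges to the limit of $i(\alpha(n))$, which is $\pa i(\xi_1)=\zeta$. Hence $\sigma(\infty)=\zeta$.

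\textbf{Step 5: uniqueness of the ray.} Suppose another pair $\xi_1',\xi_2'$ in the same fibre yields a ray $[u,\eta')$ with $\eta'\ne\eta$. Barycenter sections of triples, as in Lemma~\ref{lem-barycenter flow TMS over ray}, would give qi sections over the two rays, and both would converge to $\zeta$ by Step 4. But they diverge along the branching of $T$, and $\Pi$ is $1$-Lipschitz, so they project to rays in $T$ with distinct ends. This contradicts their convergence to the same point $\zeta$.

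\textbf{Main obstacle.} I expect Step 2 to be the crux: showing the merge vertices $v_n$ exist, escape to infinity, and do so along a single direction in $T$ rather than branching. I would first try to extract this from quasiconvexity of the ladder combined with hyperbolicity of $X$, comparing the hull of $\gamma_{\pm n}$ with the $X$-geodesic $[\alpha(-n),\alpha(n)]$.
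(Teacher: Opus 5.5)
The paper gives no proof of this statement; it is imported from Kapovich--Sardar \cite[Propositions 8.56, 8.63]{ps-kap}. Your outline has a genuine gap, and it is at the step you flagged.

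Item (2) of Step~1 overstates what Mitra's ladder provides. Through a point of $\alpha_v$ there is a qi section of $[u,v]$ running \emph{back} to $X_u$. A point $\alpha(n)$, however, continues \emph{forward} across an edge only if it lies near that edge-space image, and different $n$ may continue into different branches or not at all. Forward sections through every point, as in Lemma~\ref{lem-lift in the bi infinte ladder}, exist only for a bi-infinite ladder, i.e.\ once you already know that $\xi_1,\xi_2$ flow along the ray. That is part of the conclusion.

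So the sections $\gamma_{\pm n}$ over a common ray, and their merge vertex $v_n$, are not available. ``Flowing the segment shrinks it'' is not a justification: it \emph{is} the contraction statement, and it fails without the hypothesis. For example, take the endpoints of a lift of a closed geodesic in the fibre of a closed fibered hyperbolic $3$-manifold: the flowed segments grow in both directions along $T=\mathbb{R}$, and $\gamma_{\pm n}$ never become $C$-close.

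Meanwhile, the hypothesis $\partial i(\xi_1)=\partial i(\xi_2)$ is spent where it is not needed. If $\gamma_{\pm n}$ are $K$-qi sections over $[u,v_n]$ that are $C$-close at $v_n$, then $d_X(\alpha(-n),\alpha(n))\le 2K\,d_T(u,v_n)+2K+C$. Since $d_{X_u}(\alpha(-n),\alpha(n))=2n$, $f$-properness of $X_u\to X$ alone forces $d_T(u,v_n)\to\infty$.

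The hypothesis must instead be used to \emph{produce} the contraction. Your hull argument is the right one, but applied to a different vertex. You need a flaring-type ladder lemma: for two qi sections of a ladder, the path up one section to a coarse minimum of the fibre width, across the fibre geodesic there, and back down the other, is a uniform quasigeodesic. This is not a formal consequence of quasiconvexity, and your nesting claim needs it too. Granted that lemma, the CT hypothesis shows that the \emph{neck} of $\gamma_{\pm n}$ escapes to infinity. Otherwise the crossing fibre segment passes near $\gamma_0(v_n)$, which lies within $K\,d_T(u,v_n)+K$ of $\alpha(0)$.

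You must then do two further things.
\begin{enumerate}
\item Upgrade escaping necks to genuine $C$-closeness. For instance, apply the same lemma to inner pairs $\gamma_a,\gamma_b$ to see that their fibre width is coarsely minimised near the outer necks, and conclude with flaring.
\item Since edge spaces are only qi embedded and $T$ branches, show first that the points $\alpha(\pm n)$, for all large $n$, flow into one common branch at each vertex. Then show that all of $\alpha$ flows along the limiting ray.
\end{enumerate}
None of this is supplied, and it is precisely the content of the cited results.

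Two smaller points.
\begin{itemize}
\item Step~4 invokes Lemma~\ref{lem-lifts imp close in fiber}, which presupposes a common endpoint, so it is circular as written. Once the $\gamma_n$ are known to stay uniformly fibre-close along the ray, argue directly: they share an endpoint $\zeta'$. If $\zeta'\neq\partial i(\xi_1)$, the quasigeodesic rays $\gamma_n$, which start at $\alpha(n)\to\partial i(\xi_1)$, would all pass near a fixed point of $X$, contradicting properness. An arbitrary qi section in the ladder must still be tied to the $\gamma_n$.
\item In Step~5, barycenter sections need three points flowing along a ray, which you do not have. Use the ladder sections of Lemma~\ref{lem-lift in the bi infinte ladder} instead; then your $1$-Lipschitz projection argument works.
\end{itemize}
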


We now relate the qi section obtained in Lemma \ref{lem-barycenter flow TMS over ray} to Cannon--Thurston maps.

\begin{lemma}\label{lem-barycenter flow coincide}
	We continue with the setup and notation of Lemma \ref{lem-barycenter flow TMS over ray}. 
	Suppose that $X$ is hyperbolic. Let $\sigma$ denote the qi-section furnished by
	Lemma \ref{lem-barycenter flow TMS over ray}. Let $\sigma(\infty)$ denote its ideal end-point in $\pa X$.
	Let $\pa i:\pa X_0\to\pa X$ be the Cannon--Thurston map furnished by Theorem~\ref{thm-existence CT TMS}.
	Suppose further that $\pa i(\xi_{1,0})=\pa i(\xi_{2,0})$. Then $\sigma(\infty)=\pa i(\xi_{1,0})$.
\end{lemma}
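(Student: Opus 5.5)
We prove the lemma by comparing the barycenter section $\sigma$ with a qi section lying in the ladder $\L(\xi_{1,0},\xi_{2,0})$, whose endpoint is already known from Proposition~\ref{prop-flows over unique ray}.

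Since $\pa i(\xi_{1,0})=\pa i(\xi_{2,0})$ with $\xi_{1,0}\ne\xi_{2,0}$, the fiber $\pa i^{-1}(\pa i(\xi_{1,0}))$ has at least two points. Proposition~\ref{prop-flows over unique ray} then gives a unique ray $[0,\eta)$ carrying a contracting ladder $\L(\xi_{1,0},\xi_{2,0})$. Here $T=[0,\infty)$, so this ray is $T$ itself. Moreover, by hypothesis the flows $\xi_{1,n},\xi_{2,n}$ exist for all $n$. The same proposition says that every qi section of $T$ with image in $\L(\xi_{1,0},\xi_{2,0})$ has ideal endpoint $\pa i(\xi_{1,0})$.

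Next we build a qi section inside the ladder that tracks $\sigma$. By Lemma~\ref{lem-barycenter} there is a uniform $D=D_{\ref{lem-barycenter}}(\dl_0)$ such that for each $n$ the barycenter $\sigma(n)=Bary_{X_n}(\Xi_n)$ lies within $d_{X_n}$-distance $D$ of a point $y_n\in\alpha_n=(\xi_{1,n},\xi_{2,n})\subset\L(\xi_{1,0},\xi_{2,0})$. The $y_n$ need not form a qi section on their own. So we fix one point, say $y_0$, and apply Lemma~\ref{lem-lift in the bi infinte ladder} to get a $K$-qi section $\gamma$ of $T$ through $y_0$ with image in the ladder. Proposition~\ref{prop-flows over unique ray} gives $\gamma(\infty)=\pa i(\xi_{1,0})$.

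It remains to show $\sigma(\infty)=\gamma(\infty)$; this is the main obstacle. The sections $\sigma$ and $\gamma$ are both uniform qi sections but need not stay close, since $\gamma$ may drift along $\alpha_n$ away from $y_n$. We handle this by controlling distances along the geodesics $\alpha_n$. Fix $n$ and join $y_n$ and $\gamma(n)$ by the subsegment of $\alpha_n$. This subsegment has finite length, so $d_X(y_n,\gamma(n))$ is finite for each $n$, but not uniformly so. To conclude, we use that both $\sigma$ and $\gamma$ are quasigeodesic rays in the hyperbolic space $X$, and that each $\sigma(n)$ is uniformly close to the ladder. For every $m$, Lemma~\ref{lem-lift in the bi infinte ladder} applied at $y_m$ gives a $K$-qi section $\gamma_m$ through $y_m$ inside the ladder, and each $\gamma_m(\infty)=\pa i(\xi_{1,0})$. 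The restrictions of $\sigma$ and $\gamma_m$ to $[m,\infty)$ start within $D$ of each other, and they are uniform quasigeodesic rays that both project onto $[m,\infty)$. Hence the point $\sigma(m)$ lies within bounded distance of a uniform quasigeodesic ray from $y_m$ to $\pa i(\xi_{1,0})$, for every $m$.

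We then argue by contradiction, assuming $\sigma(\infty)=\zeta\ne\pa i(\xi_{1,0})$. Consider a bi-infinite quasigeodesic joining $\zeta$ and $\pa i(\xi_{1,0})$. The rays $\gamma_m$ start at points $\sigma(m)\to\zeta$ and converge to $\pa i(\xi_{1,0})$, so they pass within a bounded distance of a fixed point $p$ on this quasigeodesic. Since $\Pi$ is $1$-Lipschitz, $\Pi(p)$ is then a fixed vertex, yet each $\gamma_m$ only lives over $[m,\infty)$. For large $m$ this is a contradiction. Hence $\sigma(\infty)=\pa i(\xi_{1,0})$.
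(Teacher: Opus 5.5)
Your argument is correct, but it takes an unnecessary detour, because the obstacle you identify is not real. The points $y_n\in\alpha_n$ with $d_{X_n}(y_n,\sigma(n))\le D$ \emph{do} form a qi section of $[0,\infty)$. For the upper bound, $d_X(y_n,y_{n+1})\le 2D+d_X(\sigma(n),\sigma(n+1))$, which is uniformly bounded because $\sigma$ is a $K_{\ref{lem-barycenter flow TMS over ray}}$-qi section. For the lower bound, $d_X(y_n,y_m)\ge|n-m|$ because $\Pi$ is $1$-Lipschitz.

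This is exactly the paper's proof; its $\sigma'$ is your $y$. Since $n\mapsto y_n$ is a qi section lying in $\L(\xi_{1,0},\xi_{2,0})$, the last clause of Proposition~\ref{prop-flows over unique ray} gives $y(\infty)=\pa i(\xi_{1,0})$. As $\sigma$ stays within $D$ of $y$, it follows that $\sigma(\infty)=\pa i(\xi_{1,0})$.

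Your route instead threads a separate section $\gamma_m$ from Lemma~\ref{lem-lift in the bi infinte ladder} through each $y_m$, and then argues by contradiction. If $\zeta=\sigma(\infty)\neq\pa i(\xi_{1,0})$, the rays $\gamma_m|_{[m,\infty)}$ are forced to pass uniformly near a fixed point $p$. That is impossible for $m$ larger than $\Pi(p)$ plus a constant, since $\Pi$ is $1$-Lipschitz.

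This is valid, but two points should be tightened:
\begin{enumerate}
\item The rays start at $y_m$, not at $\sigma(m)$; this is harmless because $y_m\to\zeta$ as well.
\item The claim that they pass near $p$ needs an explicit justification: for large $m$ one has $\langle y_m,\pa i(\xi_{1,0})\rangle_p\le\langle\zeta,\pa i(\xi_{1,0})\rangle_p+O(\delta_0)$, and you then combine this with stability of quasigeodesic rays.
\end{enumerate}

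What your route buys is modest. It applies Proposition~\ref{prop-flows over unique ray} only to the fixed-constant sections of Lemma~\ref{lem-lift in the bi infinte ladder}, and it uses nothing about $\sigma$ beyond $\sigma(m)\to\zeta$ and closeness to the ladder. Since that proposition applies to any qi section in the ladder, this generality is not needed here.

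Some of your steps are also superfluous: the auxiliary section $\gamma$ through $y_0$ is never used, and the sentence saying $\sigma(m)$ lies near a ray from $y_m$ is immediate. The paper's two-line argument is the cleaner one.
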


\begin{proof}
We will follow the notation as in Lemma \ref{lem-barycenter flow TMS over ray}.
Let $\alpha_{ij} \subset X_0$ denote bi-infinite geodesics joining
$\xi_{i,0}$ and $\xi_{j,0}$ for $i \neq j \in \{1,2,3\}$. Then $\sigma(n)$ 
is a coarse barycenter of $\{\xi_{j,n}=\partial \Phi_{0n} (\xi_{j,0})\}_{j=1,2,3}$. In particular, there exists $K_0$ depending only on the parameters of the
tree of spaces $\Pi:X \to T$ such that for all $n$, $\sigma'(n)\in \L(\xi_{1,0},
\xi_{2,0}) \cap X_n$ and $d(\sigma(n), \sigma'(n)) \leq K_0$. Since 
$\sigma$ is a $K_{\ref{lem-barycenter flow TMS over ray}}$-qi section by
Lemma~\ref{lem-barycenter flow TMS over ray}.
 $\sigma'$ is a $(K+K_{\ref{lem-barycenter flow TMS over ray}})$-qi section. By the last statement  of Proposition \ref{prop-flows over unique ray}, $\sigma'(\infty)=\pa i(\xi_{1,0})$. Since $d(\sigma(n), \sigma'(n)) \leq K_0$ for all $n$, $\sigma(\infty)=\pa i(\xi_{1,0})$.
\end{proof}

We finally establish a reduction lemma that shows that in order to study  multiple-point preimages under  Cannon--Thurston maps, it suffices 
  to deal with a ray of spaces rather than a general tree of hyperbolic spaces.
Let $\Pi: X \to T$ be a  tree of hyperbolic spaces as in Definition~\ref{defn-tree-of-sps}. Assume that $X$ is hyperbolic. Fix a vertex $u$ of $T$. Let $\partial i_{X_u,X}: \pa X_u \to \pa X$ denote the Cannon--Thurston map
(Theorem~\ref{thm-existence CT TMS}). For any $\eta \in \partial T$, let $X_{\eta}=\Pi^{-1}([u,\eta))$.

\begin{lemma}\label{lem-enough to count over ray} With notation as above, 
Let $\xi\in\pa i_{X_u,X}(\pa X_u)$ be such  that $\xi_1,\xi_2\in\pa i^{-1}_{X_u,X}(\xi)$ are distinct points, i.e.\ $\xi$ is a multiple value of $\pa i_{X_u,X}$. Then there exists a unique $\eta \in \partial T$ such that 
$$\pa i^{-1}_{X_u,X_{\eta}}(\pa i_{X_u,X_{\eta}}(\xi_1))=\pa i^{-1}_{X_u,X}(\xi).$$
\end{lemma}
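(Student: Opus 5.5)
We take $\eta\in\pa T$ to be the ideal endpoint of the unique ray $[u,\eta)$ furnished by Proposition~\ref{prop-flows over unique ray} applied to $\xi_1$; it exists because $\pa i^{-1}_{X_u,X}(\xi)$ has at least two points. Set $Z=\pa i^{-1}_{X_u,X}(\xi)$. By Theorem~\ref{thm-existence CT TMS}, $X_\eta=\Pi^{-1}([u,\eta))$ is hyperbolic and both $X_u\to X_\eta$ and $X_\eta\to X$ admit Cannon--Thurston maps. Lemma~\ref{lem-functoriality of CT map} then gives
$$\pa i_{X_u,X}=\pa i_{X_\eta,X}\circ\pa i_{X_u,X_\eta}.$$
From this the inclusion $\pa i^{-1}_{X_u,X_\eta}(\pa i_{X_u,X_\eta}(\xi_1))\subseteq Z$ is immediate.

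For the reverse inclusion, let $\xi_2\in Z$ with $\xi_2\neq\xi_1$. Proposition~\ref{prop-flows over unique ray} gives a contracting ladder $\L(\xi_1,\xi_2)$ over $[u,\eta)$, and this ladder lies inside $X_\eta$. Since $X_\eta$ is itself a tree of hyperbolic spaces over the ray $[u,\eta)$ with the same parameters and is hyperbolic, we can apply the ladder analysis of \cite[Propositions 8.56, 8.63]{ps-kap} inside $X_\eta$. The plan is to show directly that $\pa i_{X_u,X_\eta}(\xi_1)=\pa i_{X_u,X_\eta}(\xi_2)$, as follows.
\begin{enumerate}
\item Let $\al$ be the geodesic in $X_u$ joining $\xi_1$ and $\xi_2$.
\item By the contracting property, any points $\al(n),\al(m)$ lie on $K$-qi sections in the ladder which come uniformly close in some later vertex space.
\item Let $n\to+\infty$ and $m\to-\infty$. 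The qi sections through $\al(n)$ and $\al(m)$ can be joined by uniform quasigeodesics in $X_\eta$: run along the section through $\al(n)$ up to the vertex $v$, cross the bounded gap of size at most $C$ in $X_v$, then run back along the section through $\al(m)$.
\item Both sections begin far out along $\al$. Since ladders in a hyperbolic total space are quasiconvex (Mitra's argument), these paths stay far from a basepoint of $X_u$ in $X_\eta$.
\item Hence the two ends of $\al$ converge to the same point of $\pa X_\eta$.
\end{enumerate}
An alternative route: the final clause of Proposition~\ref{prop-flows over unique ray}, applied in $X_\eta$, says that a qi section $\sigma$ in the ladder has endpoint $\pa i_{X_u,X_\eta}(\xi_1)$ as soon as $\xi_2$ lies in that preimage. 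This is circular, so the argument in (1)--(5) is the one to carry out, with the contracting property as the key input. Since $\xi_2$ was arbitrary, $Z\subseteq\pa i^{-1}_{X_u,X_\eta}(\pa i_{X_u,X_\eta}(\xi_1))$, and equality holds.

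For uniqueness, suppose $\eta'\neq\eta$ also satisfies the equality. Then $\xi_1,\xi_2$ have the same image in $\pa X_{\eta'}$, so by Proposition~\ref{prop-flows over unique ray} applied in $X_{\eta'}$ there is a contracting ladder over $[u,\eta')$. Because $X_{\eta'}$ embeds in $X$ with the same structure, this ladder is also a contracting ladder in $X$. That contradicts the uniqueness of the ray in Proposition~\ref{prop-flows over unique ray} for $X$.

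The main obstacle is step (4): showing that the contracting ladder forces the endpoints of $\al$ to become identified in $\pa X_\eta$ rather than only in $\pa X$. This needs the quasiconvexity of ladders in $X_\eta$, which is hyperbolic with controlled constants by Theorem~\ref{thm-existence CT TMS}.
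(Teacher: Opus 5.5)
Your proposal is correct in substance and has the same skeleton as the paper's proof. As there, $\eta$ comes from Proposition~\ref{prop-flows over unique ray}, and the contracting ladder $\L(\xi_1,\xi_2)$ lies in $X_\eta$ and is contracting there. This gives $\partial i_{X_u,X_\eta}(\xi_1)=\partial i_{X_u,X_\eta}(\xi_2)$, and the other inclusion follows from Lemma~\ref{lem-functoriality of CT map}.

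The real difference is the middle step. The paper simply cites Proposition~\ref{prop-flows over unique ray} again. That is, it uses the implication ``contracting ladder $\Rightarrow$ identified by the Cannon--Thurston map'', which comes from the Kapovich--Sardar analysis but is the converse of the Proposition as quoted. You noticed this, and you prove the implication directly by a Gromov-product estimate in $X_\eta$, which makes the argument self-contained.

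Your uniqueness argument is also something the paper leaves implicit. A second $\eta'$ satisfying the equality would give contracting ladders over $[u,\eta')$ in $X_{\eta'}$, hence in $X$, for every $\xi_2$ in the fiber. That contradicts the uniqueness clause for $X$.

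Two steps of your direct argument need repair, though neither is serious.

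In (3), the concatenation of $\gamma_n$, a path of length at most $C$ in $X_v$, and $\gamma_m$ reversed need not be a quasigeodesic. Nothing in the definition of contracting stops $v$ from lying much farther out than necessary. You do not need that claim, however. Since $\gamma_n$ and $\gamma_m$ are uniform quasigeodesics, thin quadrilaterals and stability put the $X_\eta$-geodesic $[\alpha(n),\alpha(m)]$ in a uniform neighborhood of $\gamma_n\cup\gamma_m$ together with the short middle piece.

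In (4), which you flag as the main obstacle, quasiconvexity of the ladder is the wrong tool. $\L(\xi_1,\xi_2)$ contains all of $\alpha$, so it comes near any basepoint $x_0\in\alpha$. The claim is in fact elementary. Take $w\in[u,v]$ with $d_T(u,w)=k$. Then $d_X(\gamma_n(w),x_0)\ge k$ because $\Pi$ is $1$-Lipschitz. Also $d_X(\gamma_n(w),x_0)\ge d_X(\alpha(n),x_0)-Kk-K$ because $\gamma_n$ is a $K$-qi section with $\gamma_n(u)=\alpha(n)$. Hence
\[
d_{X_\eta}(x_0,\gamma_n)\ \ge\ d_X(x_0,\gamma_n)\ \ge\ \frac{d_X(\alpha(n),x_0)-K}{K+1}.
\]
This tends to $\infty$ by properness of $X_u\to X$, uniformly in $v$ and $m$. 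Combined with the previous point, it gives $\langle\alpha(n),\alpha(m)\rangle_{x_0}\to\infty$ in $X_\eta$ as $n\to+\infty$ and $m\to-\infty$. Continuity of the extension $X_u\cup\partial X_u\to X_\eta\cup\partial X_\eta$ then yields (5).
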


\begin{proof}
Proposition \ref{prop-flows over unique ray} furnishes a unique 
$\eta \in \partial T$, a ray $[u,\eta)\sse T$, and a contracting ladder $\L(\xi_1,\xi_2)$ over $[u,\eta)$.  Then $\L(\xi_1,\xi_2) \subset
X_{\eta} \subset X$ is a contracting ladder both in $X_{\eta}$ and in $X$. 
By Lemma \ref{lem-lift in the bi infinte ladder}, there exists $K \geq 1$ such that
the following holds. For any $p \in \L(\xi_1,\xi_2)$, there exists a $K-$qi-section $\sigma_p: [u,\eta)
\to \L(\xi_1,\xi_2)$ passing through $p$ such that $\sigma_p (\infty) =
\pa i_{X_u,X} (\xi_1) = \pa i_{X_u,X} (\xi_2)= \xi$.
Note that $\sigma_p ([u,\eta)) \subset \L(\xi_1,\xi_2) \subset X_{\eta} \subset X$.
Thus, $\sigma_p ([u,\eta))$ is a quasigeodesic ray in $X_{\eta}$ as well as in $X$.
Further, since $\L(\xi_1,\xi_2) \subset
X_{\eta} \subset X$, it is, in particular  a contracting ladder  in $X_{\eta}$.
Hence, by Proposition \ref{prop-flows over unique ray} 
$\pa i_{X_u,X_{\eta}}(\xi_1)=\pa i_{X_u,X_{\eta}}(\xi_2)$.

Next, by Theorem \ref{thm-existence CT TMS}, $X_{\eta}$ is hyperbolic, and
the inclusions $X_u\to X_{\eta}$ and $X_{\eta}\to X$ admit Cannon--Thurston maps.
Denote these by $\partial i_{X_u,X_\eta}, \partial i_{X_\eta,X}$ respectively, so that $\partial i_{X_u,X} = \partial i_{X_\eta,X} \circ\partial i_{X_u,X_\eta}$ by Lemma~\ref{lem-functoriality of CT map}.
Therefore, if $\pa i_{X_u,X_{\eta}}(\xi_1)=\pa i_{X_u,X_{\eta}}(\xi_2)$, then 
$\pa i_{X_u,X}(\xi_1)=\pa i_{X_u,X}(\xi_2)$.

Since $\xi_2$ was arbitrary, the conclusion follows.
\end{proof}

\subsection{Finite fibers of Cannon--Thurston maps for trees of spaces}
We are now in a position to prove the main theorem of the paper.

\begin{theorem}\label{thm-finite to one TMS ray}
	Given $\dl_0\ge0$, $L_0\ge0$, $D_0\ge0$ and a proper map $f:\N\to\N$ there is a constant $N_{\ref{thm-finite to one TMS ray}}=N_{\ref{thm-finite to one TMS ray}}(\dl_0,L_0,D_0,f)\in\N$ satisfying the following. 
	
	\noindent (I) Let $\Pi:X\to[0,\infty)$ be a tree of spaces
    such that the vertices in $[0,\infty)$ correspond to the non-negative integers, edges correspond to $[n,n+1]$.
    The vertex spaces $X_n$ and edge space $X_{n,n+1}$ further
    satisfy the following.
	
	\begin{enumerate}
		\item 
		Each vertex and edge space  is  a graph with valence bounded by 
		 $D_0$.	
		
		\item Vertex and edge spaces are $\dl_0$-hyperbolic.
		
		\item Edge space to vertex space maps are $L_0$-qi embeddings.
		
		\item The total space $X$ is $\dl_0$-hyperbolic.
	\end{enumerate}
	Let $i: X_0 \to X$ denote the inclusion map.
	Let $\pa i:\pa X_0\to\pa X$ denote the Cannon--Thurston map for the pair
	$(X_0,X)$ (Theorem~\ref{thm-existence CT TMS}). Then
 for all $\xi\in\pa X_0$, $$|\pa i^{-1}(\pa i(\xi))|\le N_{\ref{thm-finite to one TMS ray}}.$$
 
 	\noindent (II) More generally, let $\Pi:X\to T$ be a tree of spaces satisfying the
 	four conditions above. Then the same conclusion holds.
\end{theorem}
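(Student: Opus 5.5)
The plan follows the sketch in the introduction, in two stages: first the ray case (I), then reduce the tree case (II) to it using Lemma~\ref{lem-enough to count over ray}.

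\emph{Case (I).} Fix $\zeta\in\pa X$ and a finite subset $A\subseteq \pa i^{-1}(\zeta)$ with $|A|\ge 3$. It suffices to bound $|A|$ by a constant depending only on $(\dl_0,L_0,D_0,f)$. I would argue in four steps.

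\begin{enumerate}
\item \emph{Every point of $A$ flows to every level.} By Proposition~\ref{prop-flows over unique ray}, for distinct $\xi_1,\xi_2\in A$ there is a contracting ladder $\L(\xi_1,\xi_2)$ over the unique ray in $T$, which here is $[0,\infty)$ itself. In particular $\pa\Phi_{0n}(\xi)$ is defined for every $\xi\in A$ and every $n$.
\item \emph{Barycenter sections.} For each triple $\Xi=(\xi_1,\xi_2,\xi_3)$ of distinct points of $A$, Lemma~\ref{lem-barycenter flow TMS over ray} gives a $K$-qi section $\sigma_\Xi(n)=Bary_{X_n}(\pa\Phi_{0n}(\Xi))$ with $K=K_{\ref{lem-barycenter flow TMS over ray}}$. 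By Lemma~\ref{lem-barycenter flow coincide}, $\sigma_\Xi(\infty)=\zeta$.
\item \emph{Sections fellow-travel fiberwise.} Lemma~\ref{lem-lifts imp close in fiber} then gives $R=R_{\ref{lem-lifts imp close in fiber}}(K,\dl_0)$ such that any two triples $\Xi,\Xi'$ satisfy $d_{X_n}(\sigma_\Xi(n),\sigma_{\Xi'}(n))\le R$ for all $n\ge N(\Xi,\Xi')$. Since $A$ has finitely many triples, one $M$ works for all pairs simultaneously.
\item \emph{Counting.} Fix one triple $\Xi_0$ and let $u$ be a vertex near $\sigma_{\Xi_0}(M)$. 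Every triple of distinct points of $\pa\Phi_{0M}(A)$ has barycenter within $R+1$ of $u$ in $X_M$. The flow $\pa\Phi_{0M}$ is injective on $A$, being a composition of the injective maps $\pa f_{e,v}\circ(\pa f_{e,u})^{-1}$. Proposition~\ref{prop-bdd nm barycenters in bdd ball}, applied in $X_M$ (which is $\dl_0$-hyperbolic with valence at most $D_0$), therefore bounds $|A|$ by $D'(\dl_0,D_0,R+1)$.
\end{enumerate}

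Since the bound does not depend on the finite set $A$, the whole fiber is finite with the same bound.

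\emph{Case (II).} If $\pa i^{-1}(\zeta)$ is a singleton there is nothing to prove. Otherwise, Lemma~\ref{lem-enough to count over ray} gives $\eta\in\pa T$ such that the fiber of $\pa i_{X_u,X}$ equals the fiber of $\pa i_{X_u,X_\eta}$, where $X_\eta=\Pi^{-1}([u,\eta))$. This $X_\eta$ is a ray of spaces with the same parameters, except that its hyperbolicity constant is controlled only in terms of $\dl_0$ by Theorem~\ref{thm-existence CT TMS}. So we apply (I) with $\dl_0$ replaced by the maximum of $\dl_0$ and that constant. Choosing the base vertex $u$ arbitrary is harmless, since (I) is stated for the base vertex $0$ and the ray can be reindexed.

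\emph{Main obstacle.} The delicate point is making the flow and the barycenter-section machinery apply uniformly. That requires every point of $A$ to flow along the same ray for all $n$. In (I) this is immediate, because the tree is a ray. In (II) it relies on the uniqueness of $\eta$ in Proposition~\ref{prop-flows over unique ray}, which is exactly what Lemma~\ref{lem-enough to count over ray} packages. A second check is needed in step 4: the constant $R$ must depend only on the parameters and not on the triples, which holds because Lemmas~\ref{lem-barycenter flow TMS over ray} and \ref{lem-lifts imp close in fiber} give uniform constants.
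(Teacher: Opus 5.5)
Your proposal is correct and follows essentially the same route as the paper: flow of all points of $A$ to every level via Proposition~\ref{prop-flows over unique ray}, barycenter qi sections (Lemmas~\ref{lem-barycenter flow TMS over ray} and \ref{lem-barycenter flow coincide}), fiberwise fellow-travelling via Lemma~\ref{lem-lifts imp close in fiber}, counting at one level via Proposition~\ref{prop-bdd nm barycenters in bdd ball}, and reduction of (II) to (I) via Lemma~\ref{lem-enough to count over ray}. You also make explicit two points the paper leaves implicit: the injectivity of $\pa\Phi_{0M}$ on $A$ (needed to pass from $|A_{n_0}|$ to $|A|$), and the reindexing of the base vertex in (II).
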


\begin{proof} We first prove (I) and use it to prove (II)
(see Remark~\ref{rmk-clar} below where we explicate the reason behind this structure).
	
It suffices to prove the existence of an integer $N$ such that given any finite set $A\sse \pa i^{-1}(\pa i(\xi))$, $|A|\le N$. 
If $\pa i^{-1}(\pa i(\xi))$ consists of a single point or two points, there is nothing to prove. Suppose $A$ has at least three points. By Proposition \ref{prop-flows over unique ray}, all points in $A$ have a boundary flow to $\pa X_n$ for all vertices $n$.
Given $\xi_i \in A$, let $\pa \Phi_{0n} (\xi_i) \in \pa X_n$ denote its boundary flow.
Given any triple $\xi_1 \neq \xi_2 \neq \xi_3 \in A$,
define $\sigma (\xi_1,\xi_2,\xi_3,n) = Bary_{X_n}(\pa \Phi_{0n} (\xi_1),  \pa \Phi_{0n} (\xi_2), \pa \Phi_{0n} (\xi_3))$. 
Then each such sequence $\{\sigma (\xi_1,\xi_2,\xi_3,n)\}_n$
is a $K_{\ref{lem-barycenter flow TMS over ray}}$-qi section of $[0,\infty)$ into $X$ by Lemma \ref{lem-barycenter flow TMS over ray}. Let $\sigma (\xi_1,\xi_2,\xi_3,\infty)$
denote the unique accumulation point of $\{\sigma (\xi_1,\xi_2,\xi_3,n)\}_n$ in $\partial X$.
 By Lemma \ref{lem-barycenter flow coincide}, $\sigma (\xi_1,\xi_2,\xi_3,\infty)=
 \pa i(\xi_1)= \pa i(\xi_2)= \pa i(\xi_3)$.

By Lemma \ref{lem-lifts imp close in fiber} it follows that for all large $n\in\N$, 
there exists $R=R_{\ref{lem-lifts imp close in fiber}}(K_{\ref{lem-barycenter flow TMS over ray}})$ such that $$d_{X_n} (\sigma (\xi_1,\xi_2,\xi_3,n), \sigma (\xi'_1,\xi'_2,\xi'_3,n)) \leq R,$$
for any pair of distinct triples $\{\xi_1 \neq \xi_2 \neq \xi_3 \}$ and  $\{\xi_1' \neq \xi_2' \neq \xi_3' \}$ in $A$. Thus, there exists $x_n \in X_n$ such that 
any such $\sigma (\xi_1,\xi_2,\xi_3,n)$ lies in an $R-$ball about $x_n$ in $(X_n, d_{X_n})$.

	Fix one such large enough $n_0\in\N$. Let $A_{n_0}$ denote the set of  points
	in $\partial X_n$ obtained as a boundary flow of points of $A$ to $\pa X_{n_0}$. Then $A_{n_0}$ has the property that for all triple $\beta=(\beta_1,\beta_2,\beta_3)$ of distinct points in $A_{n_0}$, $Bary_{X_{n_0}}(\beta)\cap B^{X_{n_0}}(x_{n_0},R)\ne\emptyset$. Finally, Proposition \ref{prop-bdd nm barycenters in bdd ball} shows that the number of elements in $A_{n_0}$, and hence in $A$, is bounded by a constant $D'$ depending only on $\dl_0,D_0,R$. 
	Choosing $N=\max\{D',2\}$ completes the proof of (I).
	
	Next, we consider the case of a general tree of spaces. By Theorem~\ref{thm-existence CT TMS} we have the following.
	For any $\eta \in \partial T$ and base vertex $u \in T$, 
	$X_{\eta}:=\Pi^{-1}([u, \eta))$ is hyperbolic with respect to the induced path metric from $X$ (see Theorem \ref{thm-existence CT TMS}). Further, the hyperbolicity constant of $X_{\eta}$
    depends only on $\delta_0$. Without loss of generality, we assume that each $X_{\eta}$ is
	$\delta_0$ hyperbolic, so that 
	each $X_{\eta}$ satisfies condition (4) in the hypothesis of part (I). Since $X$ satisfies the first three conditions, so does each $X_{\eta}$. For each $X_{\eta}$, let $i_\eta: X_0 \to X_{\eta}$ denote the inclusion map and 
	$\partial i_\eta$ the associated Cannon--Thurston map (Theorem~\ref{thm-existence CT TMS}).
	Part (I) now gives a uniform bound $N$, independent of $\eta$ on the cardinality of
	$\pa i_\eta^{-1}(\pa i_\eta(\xi))$ for $\xi \in \partial X_0$.
	Finally,  Lemma \ref{lem-enough to count over ray} shows that for each
	$\xi \in \partial X_0$, there exists $\eta \in \partial T$ such that
$\pa i^{-1} (\pa i(\xi)) = \pa i_\eta^{-1}(\pa i_\eta(\xi))$ 
concluding the proof.
	\end{proof}
	
	\begin{rmk}\label{rmk-clar}
	    We have divided the statement of the above theorem into two parts so that the first part can be referenced directly in the proof of Theorem~\ref{thm-finite to one MB gen}.
	\end{rmk}

As an immediate consequence of Theorem \ref{thm-finite to one TMS ray}, we have the following group-theoretic result.

\begin{cor}\label{thm-gog vertex}
	Suppose $G$ is a hyperbolic group which can be decomposed as a finite graph of hyperbolic groups with the qi embedded condition and $H$ is a vertex group. Then the Cannon--Thurston map $\pa i:\pa H\to\pa G$ is uniformly finite-to-one.
\end{cor}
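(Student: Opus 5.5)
The plan is to realize $G$ geometrically as a tree of hyperbolic spaces over the Bass--Serre tree $T$ of the graph of groups decomposition, and then apply part (II) of Theorem~\ref{thm-finite to one TMS ray} to the vertex space corresponding to $H$.

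\textbf{Step 1: the tree of spaces.} Let $\mathcal{G}$ be the finite graph of groups with vertex groups $G_v$ and edge groups $G_e$, all hyperbolic, with $H=G_{v_0}$. Fix finite generating sets $S_v$ for each vertex group and $S_e$ for each edge group, and enlarge each $S_v$ so that it contains the images of $S_e$ for every edge $e$ incident to $v$. Build $X$ in the standard way. For every vertex $\tilde v\in T$ lying over $v$, take a copy of the Cayley graph $\Gamma(G_v,S_v)$, namely the coset graph on $gG_v$. For every edge $\tilde e$ of $T$, take $\Gamma(G_e,S_e)\times[0,1]$ and attach its ends to the adjacent vertex spaces via the edge monomorphisms, translated by the appropriate coset representatives.

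\textbf{Step 2: check the hypotheses.} The group $G$ acts on $X$ cocompactly and properly, so $X$ is quasi-isometric to $\Gamma_G$, and $X$ is hyperbolic with some constant $\dl_0$. There are only finitely many $G$-orbits of vertex and edge spaces, each a translate of one fixed Cayley graph. Hence the following hold uniformly:
\begin{itemize}
\item the valence bound $D_0$;
\item the hyperbolicity constant of the vertex and edge spaces;
\item the qi-embedding constant $L_0$, using the qi embedded condition on each of the finitely many edge monomorphisms;
\item the properness function $f$, since vertex and edge groups are finitely generated subgroups whose inclusion is uniformly proper by equivariance and finiteness.
\end{itemize}
Rescaling so that all constants agree gives parameters $(\dl_0,L_0,D_0,f)$ as required in Theorem~\ref{thm-finite to one TMS ray}.

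\textbf{Step 3: apply the theorem and transfer.} Part (II) of Theorem~\ref{thm-finite to one TMS ray} gives a uniform bound $N$ on the fibers of the Cannon--Thurston map $\pa X_{\tilde v_0}\to\pa X$. The orbit map $\Gamma_G\to X$ is a $G$-equivariant quasi-isometry. It restricts to a quasi-isometry $\Gamma_H\to X_{\tilde v_0}$ that is coarsely compatible with the inclusions. Quasi-isometries induce homeomorphisms of boundaries, and these commute with the Cannon--Thurston maps, since each such map is determined by limits of sequences. It follows that $\pa i:\pa H\to\pa G$ has the same fiber cardinalities and is uniformly finite-to-one, with bound $N$. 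The bound is also independent of the choice of generating sets, because changing generating sets induces equivariant boundary homeomorphisms.

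\textbf{Main obstacle.} The only delicate point is the compatibility in Step 3. One must check that the Cannon--Thurston map for Cayley graphs agrees, under these boundary homeomorphisms, with the one for the tree of spaces. I would verify this directly: a sequence $h_n\to\xi$ in $\Gamma_H$ maps to a sequence in $X_{\tilde v_0}$ at bounded distance from the images under the quasi-isometry, so the limits in $\pa X$ and $\pa G$ correspond.
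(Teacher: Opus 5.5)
Your proposal is correct and follows the paper's route. The paper presents this corollary as an immediate consequence of Theorem~\ref{thm-finite to one TMS ray}(II) applied to the Bass--Serre tree of spaces. Your Steps 1--3 supply the standard details the paper leaves implicit: uniform parameters from finitely many orbits, and transfer of fiber cardinalities through the equivariant quasi-isometry $\Gamma_G\to X$.
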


\section{Metric graph bundles}\label{sec-mgb}
Metric graph bundles were introduced in \cite{pranab-mahan} as a coarse-geometric generalization of  fiber bundles. Examples include trees of metric spaces where
each edge-to-vertex space map is a uniform quasi-isometry (and not just a qi embedding).

\subsection{Preliminaries on Metric graph bundles}\label{sec-pmgb}
In this subsection, we recall the material from 
\cite{pranab-mahan} that we will need subsequently.

\begin{defn}\textup{(\cite[Definition 1.5]{pranab-mahan})}\label{defn-metric graph bundle}
	Let $X$ and $B$ be  graphs equipped with their natural metrics $d_X, d_B$. Let $f:\N\map\N$ be a proper map, i.e. $f(n)\map\infty$ as $n\map\infty$. We say that $X$ is an $f$-metric graph bundle over $B$ if there is a simplicial, surjective (and $1$-Lipschitz) map $p:X\map B$ such that the following hold.
	\begin{enumerate}
		\item For all $b\in V(B)$, $F_b:=p^{-1}(b)$, called the fiber over $b$, is a connected subgraph of $X$. The intrinsic path metric on $F_b$ is denoted as
		$d_b$. The inclusion maps $(F_b,d_b)\map X$ are $f$-proper embedding.
		
		\item Let $b_1,b_2\in V(B)$ be adjacent vertices, and let $x\in V(F_{b_1})$. Then  $x$ is connected by an edge in $X$ to a point in $V(F_{b_2})$.
	\end{enumerate}
\end{defn}

We shall often simply say that $p:X\map B$ is an $f$-metric graph bundle to summarize the content of Definition~\ref{defn-metric graph bundle}.

\begin{eg}\label{eg-mgb} Examples include
\begin{enumerate}
\item finitely generated normal subgroups $H$ of finitely generated groups $G$.
Choosing a finite generating set of $H$, extending to one for $G$ and inducing a
finite generating set of $Q=G/H$, we have that $p: \Gamma_G \to \Gamma_Q$ is a metric graph bundle with fibers $G$-translates of $\Gamma_H$ (see \cite[Example $1.8$]{pranab-mahan}).
\item and more generally, finitely generated commensurated subgroups $H$ of finitely generated groups $G$. Here $\Gamma_Q$ is replaced by the  the graph whose vertices are the cosets $gH$ and edges join pairs $\{gH,gsH\}$, where $s$ ranges over the finite generating set of $G$ used in constructing $\Gamma_G$.
(See \cite[Proposition 3.14]{margolis-almostnormal} and \cite[Proposition 5.12]{NirMaMj-commen}. Note that this graph differs from the Schreier graph whose edges join pairs $\{gH,sgH\}$. This graph is in fact quasi-isometric to the graph obtained from $\Gamma_G$ by electrifying the left cosets of $\Gamma_H$.)
\end{enumerate}
\end{eg}

The following lemma guarantees that adjacent fibers are quasi-isometric via a natural map.
	\begin{lemma}\textup{(\cite[Proposition 1.7]{pranab-mahan})}\label{lem-natural map QI}
Let $b_1,b_2\in V(B)$ such that $d_B(b_1,b_2)=1$. Let $\phi:F_{b_1}\map F_{b_2}$ be any map that sends $x\in V(F_{b_1})$ to $\phi(x)\in V(F_{b_2})$ such that $x$ and $\phi(x)$ are joined by an edge in $X$. Then $\phi$ is a $k_{\ref{lem-natural map QI}}(f)$-quasiisometry where $k_{\ref{lem-natural map QI}}(f)\ge1$ is a constant depending only on the function $f$.
\end{lemma}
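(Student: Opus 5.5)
The plan is to show that $\phi$ is coarsely Lipschitz, build a coarse inverse $\psi$ in the same way, and combine the two. Every distance comparison will come from the $f$-properness of the fibers in Definition~\ref{defn-metric graph bundle}(1). Concretely, we use it in the form: for $x,y\in F_b$, $d_X(x,y)\le n$ implies $d_b(x,y)\le f(n)$. Throughout we work with vertices. A point in the interior of an edge of $F_{b_1}$ is within $1/2$ of a vertex, so it changes the constants by at most an additive amount, which is absorbed at the end.

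\textbf{Step 1 (Lipschitz bound).} Let $x,y\in V(F_{b_1})$ be adjacent in $F_{b_1}$. Then the path $\phi(x),x,y,\phi(y)$ gives $d_X(\phi(x),\phi(y))\le 3$, so $d_{b_2}(\phi(x),\phi(y))\le f(3)$. Now take arbitrary vertices $x,y$ and a geodesic edge-path between them in $F_{b_1}$. Applying the adjacent case along the path and using the triangle inequality gives $d_{b_2}(\phi(x),\phi(y))\le f(3)\,d_{b_1}(x,y)$.

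\textbf{Step 2 (coarse inverse).} By Definition~\ref{defn-metric graph bundle}(2) with the roles of $b_1,b_2$ swapped, we can choose $\psi:V(F_{b_2})\to V(F_{b_1})$ with each $z$ joined by an edge to $\psi(z)$. By Step 1 applied to $\psi$, it is $f(3)$-Lipschitz. For $x\in V(F_{b_1})$ we have $d_X(x,\psi\phi(x))\le 2$, hence $d_{b_1}(x,\psi\phi(x))\le f(2)$. Symmetrically, $d_{b_2}(z,\phi\psi(z))\le f(2)$ for $z\in V(F_{b_2})$.

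\textbf{Step 3 (conclusion).} For the lower bound, estimate
\[
d_{b_1}(x,y)\le d_{b_1}(\psi\phi x,\psi\phi y)+2f(2)\le f(3)\,d_{b_2}(\phi x,\phi y)+2f(2).
\]
Coarse surjectivity holds because every $z\in F_{b_2}$ is within $f(2)+1/2$ of $\phi(\psi(z))$, which lies in the image of $\phi$. Hence $\phi$ is a $k$-quasi-isometry with, say, $k=\max\{f(3),\,2f(2)+2\}$, which depends only on $f$.

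There is no serious obstacle here. The only care needed is bookkeeping: the intrinsic metrics $d_{b_i}$ are controlled only through $d_X$ via $f$, so each comparison must be routed through a short path in $X$ before $f$ is applied.
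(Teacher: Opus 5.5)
Your argument is correct. The paper gives no proof of its own and simply cites \cite[Proposition 1.7]{pranab-mahan}, where the argument is essentially the one you give: adjacent vertices of $F_{b_1}$ go to points at $X$-distance at most $3$, hence at $d_{b_2}$-distance at most $f(3)$, and the reverse choice $\psi$ is a coarse inverse because $\psi\circ\phi$ and $\phi\circ\psi$ move vertices by at most $f(2)$ in the fiber metric.
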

We now introduce hyperbolicity.
\begin{defn}\label{defn-controlled fibers}
	We say that an $f$-metric graph bundle $p:X\to B$ has controlled hyperbolic fibers if for all $b\in V(B)$, $F_b$ is a $\dl_0$-hyperbolic graph  and the barycenter map $\pa^3F_b\to F_b$ is $L'$-coarsely surjective for some fixed $\dl_0\ge0$ and $L'\ge0$.
\end{defn}

\noindent{\bf Convention.} We assume henceforth in this section that $p:X\to B$ is an $f$-metric graph bundle with controlled hyperbolic fibers with the parameters $\dl_0$, $L'$ as in Definition \ref{defn-controlled fibers}. Moreover, $X$ is $\dl_0$-hyperbolic and by \textup{(\cite[Proposition $2.10$]{pranab-mahan})}, $B$ is hyperbolic. We also assume that $B$ is $\dl_0$-hyperbolic. We denote the fiber of the metric bundle over a vertex $v$ in $B$ by $F_v$.\smallskip

In \cite{pranab-mahan}, it was shown that the total space $X$ of an $f$-metric graph bundle 
$p:X \to B$ with controlled hyperbolic fibers is  hyperbolic if and only if the $p:X \to B$ satisfies a Bestvina--Feighn  flaring condition and $B$ is hyperbolic.
We shall need the following consequence (see \cite[Theorem 4.3, Proposition 5.8 and Remark 4.4]{pranab-mahan}).

\begin{theorem}\label{thm-hyp MB}
Let $p:X\to B$ be the metric graph bundle with parameters $\dl_0$, $L'$. Then, given $\dl_0\geq 0, K\geq 1$,
there exists $\dl_1 \geq 0$ such that the following holds.
If $X$ is  $\dl_0$ hyperbolic, then for any $K-$qi embedded subspace $A\sse B$, $p^{-1}(A)$ is $\dl_1$-hyperbolic with its intrinsic path metric.
\end{theorem}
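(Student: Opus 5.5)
The plan is to realize $p^{-1}(A)$ as a metric graph bundle over (a graph model of) $A$ with controlled hyperbolic fibers, and then apply the Mj--Sardar combination theorem \cite[Theorem 4.3]{pranab-mahan}. That theorem says such a bundle is hyperbolic as soon as the base is hyperbolic and the bundle satisfies a uniform flaring condition, with the hyperbolicity constant depending only on the parameters. We therefore have to check three things, uniformly in $A$: the restricted object is an $f'$-metric graph bundle with controlled hyperbolic fibers, its base is uniformly hyperbolic, and it satisfies uniform flaring.

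First I replace $A$ by the full subgraph $A'$ of $B$ spanned by the vertices in a uniform neighborhood of $A$, so that $A'$ is connected and the inclusion $A'\to B$ is still a $K'$-qi embedding with $K'=K'(K)$. This changes $p^{-1}(A)$ only up to uniform quasi-isometry, and hyperbolicity is a quasi-isometry invariant among geodesic spaces, so there is no loss. The restriction $p|:p^{-1}(A')\to A'$ is simplicial and surjective, and condition (2) of Definition~\ref{defn-metric graph bundle} is inherited verbatim. The fibers $F_b$ are the same graphs as before, so they remain $\dl_0$-hyperbolic with $L'$-coarsely surjective barycenter maps. For properness, note that the intrinsic metric on $p^{-1}(A')$ dominates $d_X$, so $d_{p^{-1}(A')}(x,y)\le n$ implies $d_X(x,y)\le n$ and hence $d_b(x,y)\le f(n)$; thus we have an $f$-metric graph bundle. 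Since $A'$ is $K'$-qi embedded in the $\dl_0$-hyperbolic graph $B$, stability of quasigeodesics shows that $A'$ is $\dl'(\dl_0,K)$-hyperbolic.

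The remaining and main step is uniform flaring over $A'$. The relevant statement is \cite[Proposition 5.8]{pranab-mahan}: hyperbolicity of $X$ implies flaring for pairs of qi sections over geodesics of $B$. A geodesic of $A'$, however, is only a $K'$-quasigeodesic of $B$, and a qi section over it into $p^{-1}(A')$ is a qi section into $X$ with worse constants. So I need flaring for qi sections over uniform quasigeodesics of $B$, with constants depending on $K'$. I would first try to rerun the necessity argument of \cite{pranab-mahan} directly in $X$. Take two $k$-qi sections $\al,\bt$ over a $K'$-quasigeodesic segment $\gm\subset B$ whose fiberwise distances $d_{\gm(t)}(\al(t),\bt(t))$ fail to flare. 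Together with short fiber geodesics joining their endpoints and midpoint, they bound a quasigeodesic quadrilateral in $X$. Thinness of this quadrilateral, combined with $f$-properness of the fibers and the fact that $\gm$ is quasigeodesic in $B$ (which controls how far base projections of points can backtrack), produces a contradiction exactly as in the geodesic case; the constants now depend also on $K'$.

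If this direct route proves awkward, my fallback is to pass to a geodesic in $B$. Replace $\gm$ by a geodesic $\gm'$ of $B$ at uniform Hausdorff distance, and transport the qi sections $\al,\bt$ fiberwise to qi sections over $\gm'$ using the natural quasi-isometries of Lemma~\ref{lem-natural map QI} along bounded-length paths in $B$. This changes fiber distances only by a uniform multiplicative and additive amount, so flaring over $\gm'$ transfers back to $\gm$. With hyperbolicity of the base and flaring both established, \cite[Theorem 4.3]{pranab-mahan} (together with \cite[Remark 4.4]{pranab-mahan} for the dependence of constants) gives the uniform constant $\dl_1$.
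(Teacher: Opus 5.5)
Your plan is correct and follows the same route the paper takes by citing Mj--Sardar (Theorem 4.3, Proposition 5.8 and Remark 4.4): the restriction over (a thickening of) $A$ is again a metric graph bundle with controlled hyperbolic fibers over a uniformly hyperbolic base, flaring is inherited from hyperbolicity of $X$, and the combination theorem with its uniform constants gives $\delta_1$.

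On the one delicate point, flaring over geodesics of $A$ (which are only $K'$-quasigeodesics in $B$), your first route is the right one, because the necessity-of-flaring argument uses only that the lifts are uniform quasigeodesics in $X$ and that base distances along $\gamma$ are coarsely $|s-t|$. Your fallback would not work as written without an extra amplification step: a fixed multiplicative distortion of fiber distances can swamp a flaring factor $\lambda_k$ close to $1$.
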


The following theorem ensures the existence of Cannon--Thurston maps in the context of metric graph bundles.

\begin{theorem}\label{thm-existence CT MB}
Suppose $p:X\to B$ is an $f$-metric graph bundle with controlled hyperbolic fibers. Assume that $X$ is hyperbolic.

\begin{enumerate}
\item \textup{(\cite[Theorem $5.3$]{pranab-mahan})}For any vertex $b$ of $B$, the inclusion $i_b: F_b\to X$ admits a Cannon--Thurston map.

\item \textup{(\cite[Theorem $5.2$]{ps-krishna})} For any qi embedded subgraph $A$ in $B$, the inclusion $p^{-1}(A)\to X$ admits a Cannon--Thurston map.
\end{enumerate}
\end{theorem}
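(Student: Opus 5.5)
The plan is to prove both parts with Mitra's criterion for the existence of Cannon--Thurston maps \cite{mitra-ct,mitra-trees}. For an inclusion $i:Y\to X$ of hyperbolic graphs with base point $y_0\in Y$, the criterion asks for a proper function $M:\N\to\N$ with the following property: whenever a geodesic segment $\lambda\subset Y$ lies outside the $n$-ball about $y_0$ in $Y$, every geodesic in $X$ joining the endpoints of $\lambda$ lies outside the $M(n)$-ball about $i(y_0)$ in $X$. I would verify this criterion by building a ``ladder'' $\L_\lambda\subset X$ containing $\lambda$ and establishing two facts about it: $\L_\lambda$ is uniformly quasiconvex in $X$, and $\L_\lambda$ stays far from $y_0$ when $\lambda$ does.

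For (1), fix a vertex $b$ and a geodesic $\lambda=[x,y]\subset F_b$. For each vertex $v\in B$, choose a geodesic $[b,v]$ in $B$ and compose the natural quasi-isometries of Lemma~\ref{lem-natural map QI} along it to get a map $\phi_{bv}:F_b\to F_v$. Let $\lambda_v$ be a geodesic in $F_v$ joining $\phi_{bv}(x)$ and $\phi_{bv}(y)$, and set $\L_\lambda=\bigcup_v\lambda_v$. Let $\Pi_v:F_v\to\lambda_v$ be nearest-point projection in $(F_v,d_v)$, and define $\Pi_\lambda:X\to\L_\lambda$ fibrewise by these projections. The first key step is to show that $\Pi_\lambda$ is coarsely Lipschitz. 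Along an edge inside a fibre this is the standard fact that nearest-point projection onto a geodesic in a $\dl_0$-hyperbolic space is coarsely Lipschitz. Across an edge joining adjacent fibres $F_v,F_w$, nearest-point projections coarsely commute with a uniform quasi-isometry, and $\phi_{vw}(\lambda_v)$ is uniformly Hausdorff-close to $\lambda_w$ by stability of quasigeodesics. The remaining issue is that different choices of geodesics $[b,v]$ in $B$ must give uniformly close flows. I would handle this with the controlled-hyperbolic-fibres hypothesis: coarse surjectivity of barycenter maps lets one track points via barycenters, and Lemma~\ref{lem-barycenter close} then makes the flowed geodesics well defined up to uniform error. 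Once $\Pi_\lambda$ is coarsely Lipschitz, $\L_\lambda$ is a coarse retract of $X$ and hence uniformly quasiconvex, so any $X$-geodesic from $x$ to $y$ lies in a uniform neighbourhood of $\L_\lambda$.

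The second key step is to show that $d_X(y_0,\L_\lambda)$ is large when $d_b(y_0,\lambda)$ is. Take $z\in\lambda_v$ close to $y_0$ in $X$. As in Lemma~\ref{lem-lift in the bi infinte ladder}, there is a uniform qi section of $[b,v]$ lying in $\L_\lambda$ and joining $z$ back to some $z'\in\lambda$. The issue is to bound $d_b(y_0,z')$ in terms of $d_X(y_0,z)$, and I expect this to be the main obstacle, since qi sections may a priori travel far in $B$ and come back. I would use hyperbolicity of $X$ in the form of the flaring condition from Theorem~\ref{thm-hyp MB}, \cite{pranab-mahan}. First compare the section through $z'$ with a qi section through $y_0$: since $\Pi_\lambda$ is coarsely Lipschitz, the projection of $y_0$'s section to the ladder is uniformly close to the ladder's section through $\Pi_b(y_0)$. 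The two sections are then uniformly close at the fibre over $v$, so flaring bounds their distance in $F_b$ by a function of $d_X(y_0,z)$. Finally, $f$-properness of fibres converts this into the required proper function $M$.

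For (2), let $A\subset B$ be $K$-qi embedded. By Theorem~\ref{thm-hyp MB}, $Y=p^{-1}(A)$ is $\dl_1$-hyperbolic and is itself a metric graph bundle over $A$. A geodesic $\lambda$ in $Y$ is no longer contained in a single fibre, so I would cut it into boundedly many fibre-segments joined by short horizontal pieces, using the fact that its projection to $A$ is a quasigeodesic. I would then take $\L_\lambda$ to be the union of the ladders of these fibre-segments, flowed over all of $B$, with retraction defined as before. Quasiconvexity follows as in (1). Properness follows because each fibre-segment far from the base point has a far ladder by (1), while the horizontal pieces far from the base point are far in $X$ because $A$ is qi embedded. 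This gives Mitra's criterion for $Y\to X$.
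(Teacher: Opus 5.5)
The paper gives no argument for this statement; it just quotes the two cited theorems. Your framework for (1) is the standard one (Mitra's criterion, a ladder over all of $B$, a fibrewise nearest-point retraction). There is, however, a genuine gap in (1), an unnecessary and incorrect detour in the properness step, and a more serious gap in (2).

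\textbf{The ladder in (1).} As you define it, the ladder does not work, and the issue you flag cannot be resolved the way you state it.
\begin{itemize}
\item If $\lambda_v$ joins $\phi_{bv}(x),\phi_{bv}(y)$, where $\phi_{bv}$ is composed along a chosen geodesic $[b,v]$, then for adjacent $v,w$ the maps $\phi_{vw}\circ\phi_{bv}$ and $\phi_{bw}$ differ by the monodromy of a loop of length about $2d_B(b,v)$. That monodromy is only within $f(2d_B(b,v)+2)$ of the identity, which is not uniform.
\item Concretely, take a hyperbolic $G=H\rtimes_\varphi\Z$ with generators $S_H\cup\{t,t^2h_0\}$ and $\varphi(h_0)\neq h_0$. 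The base geodesics $0,1,3,\dots,2m+1$ and $0,2,\dots,2m,2m+1$ give the composites $x\mapsto xt(t^2h_0)^m$ and $x\mapsto x(t^2h_0)^mt$. These differ by right multiplication by elements of $H$ of unbounded length. So the claim that different choices of $[b,v]$ give uniformly close flows is false.
\item Lemma~\ref{lem-barycenter close} cannot repair this. Applied to $\phi_{bv}$, its constant depends on the qi constants of $\phi_{bv}$, which blow up with $d_B(b,v)$.
\item Barycenters must instead be used to \emph{redefine} the rungs. Loop monodromies are at bounded distance from the identity, so the boundary flow $\partial F_b\to\partial F_v$ is path-independent. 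Pick a triple $\Xi_x$ whose barycenter is $L'$-close to $x$. Then $\Sigma_x(v)=Bary_{F_v}(\partial\phi_{bv}\Xi_x)$ is a uniform qi section of all of $B$, by Lemma~\ref{lem-barycenter close} applied one edge at a time. Set $\lambda_v=[\Sigma_x(v),\Sigma_y(v)]_{F_v}$. With this definition your coarse-Lipschitz retraction argument goes through.
\end{itemize}

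\textbf{Properness in (1).} The step you call the main obstacle is easy, and your flaring argument for it does not work:
\begin{itemize}
\item flaring gives lower bounds on divergence, not an upper bound in $F_b$ from closeness in $F_v$;
\item the two sections are not uniformly close at $F_v$;
\item controlling $d_b(\Pi_b(y_0),z')$ does not control $d_b(y_0,z')$, because $d_b(y_0,\Pi_b(y_0))=d_b(y_0,\lambda)$ is exactly the large quantity.
\end{itemize}
The direct argument: the section from $z$ to $z'$ lies over $[b,v]$, and $d_B(b,v)\le d_X(y_0,z)$ since $p$ is $1$-Lipschitz. Hence $d_X(y_0,z')\le (K+1)d_X(y_0,z)+K$, and $f$-properness of $F_b$ bounds $d_b(y_0,\lambda)$ in terms of $d_X(y_0,z)$.

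\textbf{Part (2).} Here the gap is more serious.
\begin{itemize}
\item A geodesic of $Y=p^{-1}(A)$ cannot in general be cut into boundedly many fibre segments joined by short horizontal pieces, and its projection to $A$ need not be a quasigeodesic.
\item Fibres are exponentially distorted, so a $Y$-geodesic between far-apart points of one fibre makes a long excursion in the base and comes back, crossing unboundedly many fibres. This already happens in the universal cover of a closed hyperbolic $3$-manifold fibring over the circle.
\item With unboundedly many pieces, neither quasiconvexity of a union of ladders nor a thin-polygon comparison in $X$ has uniform constants, so Mitra's criterion does not follow.
\end{itemize}
You need an additional structural input that replaces a $Y$-geodesic by boundedly many controlled pieces, together with properness of $Y\to X$. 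For instance, the pieces could be qi-section segments over geodesics of $A$ (these are $X$-quasigeodesics since $A$ is qi embedded) and fibre geodesics, each shown to stay far from the base point. This is genuine extra work beyond (1), which is why the paper cites a separate result for it.
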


\begin{defn}[Bi-infinite Ladders]\label{def-ladder-mgb} For 
 Let $b\in V(B)$, $\xi_1,\xi_2\in\pa F_b$. Let $\al$ is a geodesic ray joining $b$ and $\eta\in\pa B$. Let $b=b_0, \cdots, b_n, \cdots$ be the sequence of vertices on $\alpha$. Let $\phi_n$ denote the quasi-isometry obtained by composing the quasi-isometries between $F_{b_i} , F_{b_{i+1}}$  (cf.\ Lemma \ref{lem-natural map QI}) in order for $i=0, \cdots, n-1$. Let 
 $l_n$ denote a bi-infinite geodesic  joining $\pa\phi_n(\xi_1)$ and $\pa\phi_n(\xi_2)$ in $F_{b_n}$.
 Then $$\L_\eta(\xi_1,\xi_2):=\bigcup_{n\in\{0\}\cup\N}l_n$$
 is called the \emph{bi-infinite ladder} corresponding to $l_0$ along $\alpha$.
 \end{defn}

 \begin{rmk} We remark here that the notion of a bi-infinite ladder above and the  structure of the subsequent exposition below is quite similar to that in Section~\ref{sec-treebdls}. Note that in Section~\ref{sec-treebdls}, the vertex and edge spaces are not required to be quasi-isometric to each other, whereas the base space is a tree.
 On the other hand, in a metric graph bundle, 
 the vertex and edge spaces are necessarily uniformly quasi-isometric to each other, whereas the base is an arbitrary graph of bounded valence. This difference necessitates a bit of repetition in the exposition below.
 \end{rmk}

 It is easy to see that if $\alpha, \alpha'$ are two geodesics (or even quasi geodesics) in $(B,d_B)$ joining $b, \eta$, then the bi-infinite ladders corresponding to $l_0$ along $\alpha, \alpha'$ are at a bounded Hausdorff distance from each other.
 Thus, $\L_\eta(\xi_1,\xi_2)$ is coarsely well-defined once $\eta, \xi_1, \xi_2$ are given.

The notion of a contracting ladder (Definition~\ref{def-contractingladder}) now goes through without any change and is defined as in the context of metric graph bundles.
We shall need the following analog of Proposition \ref{prop-flows over unique ray}.

\begin{prop}\cite[Lemma $6.17$]{ps-krishna}\label{prop-unique ray in base}
Let $p:X\to B$ be an $f$-metric graph bundle with controlled hyperbolic fibers. Assume that $X$ is hyperbolic.

Let $b\in V(B)$. Let $\pa i_{F_b,X}:\pa F_b\map\pa X$ be the Cannon--Thurston map
from Theorem~\ref{thm-existence CT MB}. Let $z\in\pa X$ be such that $|\pa i_{F_b,X}^{-1}(z)|\ge2$. Then there is a point $\eta\in\pa B$ with the following property. Fix a geodesic ray $\al$ joining $b$ and $\eta$. For any $\xi_1,\xi_2\in\pa i^{-1}_{F_b,X}(z)$, the ladder $\L(\xi_1,\xi_2)$ over $\al$ is contracting. Moreover, for any qi section, $\widetilde{\al}$  of $\al$ contained in $\L(\xi_1,\xi_2)$, we have $\widetilde{\al}(\infty)=z$. In particular, $\eta$ is determined by $z$.
\end{prop}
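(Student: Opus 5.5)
The plan is to mimic the tree case, Proposition~\ref{prop-flows over unique ray}, replacing the tree by the hyperbolic base $B$. It has three parts. First, produce a candidate direction $\eta\in\pa B$ from a single pair $\xi_1\ne\xi_2$ in $\pa i_{F_b,X}^{-1}(z)$. Next, show that the ladder over a ray $[b,\eta)$ is contracting and that its qi sections converge to $z$. Finally, show that $\eta$ does not depend on the choice of $\xi_2$, and is in fact determined by $z$ alone.

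\emph{Step 1: finding $\eta$.} Fix a geodesic $l_0\subset F_b$ joining $\xi_1,\xi_2$. Since $\pa i_{F_b,X}(\xi_1)=\pa i_{F_b,X}(\xi_2)=z$, continuity of the Cannon--Thurston map has a consequence for $X$-geodesics $[l_0(-n),l_0(n)]_X$: they must leave every ball about $l_0(0)$, and both ends accumulate on $z$.
\begin{enumerate}
\item Mj--Sardar provide uniform $k$-qi sections of $B$ through every point of $X$. Taking these through $l_0(\pm n)$ and restricting to the ladder $\bigcup_{b'} l_{b'}$ (the flow of $l_0$ to all fibers) gives two qi sections.
\item The flaring condition (equivalent to hyperbolicity of $X$ by Theorem~\ref{thm-hyp MB} and \cite{pranab-mahan}) gives a ``neck'': a coarsely well-defined set of $b'\in B$ where the fiber distance between the two sections is bounded by a uniform $C$.
\item The $X$-geodesic $[l_0(-n),l_0(n)]_X$ is then uniformly close to a concatenation: follow one section to the neck, cross a fiber segment of length at most $C$, and return along the other section.
\item Because these geodesics escape every compact set, the neck points $b'_n$ must go to infinity in $B$.
\item Thinness of triangles in the $\dl_0$-hyperbolic base $B$, together with nesting of the necks for larger $n$, forces $b'_n$ to converge to a single point $\eta\in\pa B$.
\end{enumerate}

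\emph{Step 2: contraction and limit point.} Fix a geodesic ray $\al=[b,\eta)$. The necks lie coarsely along $\al$. Hence for any $n,m$, the qi sections of a segment of $\al$ through $l_0(n)$ and $l_0(m)$, chosen inside $\L(\xi_1,\xi_2)$ as in Lemma~\ref{lem-lift in the bi infinte ladder}, become $C$-close in the fiber over the far end of that segment. This is exactly the $(K,C)$-contracting condition of Definition~\ref{def-contractingladder}.

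Any qi section $\widetilde\al\subset\L(\xi_1,\xi_2)$ of $\al$ is a uniform quasigeodesic ray in $X$. It passes within bounded distance of the $X$-geodesics $[l_0(-n),l_0(n)]_X$ at points that tend to infinity, and those geodesics accumulate only on $z$. Hence $\widetilde\al(\infty)=z$.

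\emph{Step 3: independence of $\xi_2$ and of $\xi_1$.} Suppose $\xi_2,\xi_3$ (and possibly a different $\xi_1$) yield directions $\eta,\eta'$.
\begin{enumerate}
\item By Step~2, the corresponding qi sections $\widetilde\al$ and $\widetilde\al'$ are both uniform quasigeodesic rays in $X$ converging to $z$.
\item Asymptotic quasigeodesic rays in $X$ eventually lie in a bounded neighborhood of each other, so their tails are at bounded Hausdorff distance.
\item Since $p$ is $1$-Lipschitz and $p\circ\widetilde\al=\al$, $p\circ\widetilde\al'=\al'$, the tails of $\al$ and $\al'$ are also at bounded Hausdorff distance in $B$. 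Therefore $\eta=\eta'$.
\end{enumerate}
This shows that $\eta$ depends only on $z$, and the statement follows for all pairs in $\pa i_{F_b,X}^{-1}(z)$.

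The main obstacle is Step~1: converting the bare topological fact that $\xi_1,\xi_2$ are identified into a coherent direction $\eta$ in $B$. This needs a careful flaring/neck analysis showing that the necks are coarsely nested along a single quasigeodesic ray, rather than wandering in the non-tree base $B$. I would first try to use hyperbolicity of $p^{-1}(\gm)$ over geodesics $\gm\subset B$ (Theorem~\ref{thm-hyp MB}) to run the one-dimensional argument over each geodesic from $b$. I would then use thin triangles in $B$ to glue these arguments together.
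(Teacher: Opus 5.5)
The paper gives no proof of this statement; it imports it from Krishna--Sardar (Lemma 6.17 of the cited work), so your outline has to carry the whole argument on its own. The later parts of the outline are sound. Suppose that, for one pair $\xi_1\neq\xi_2$, you have a ray $[b,\eta)$ over which $\mathcal{L}(\xi_1,\xi_2)$ contracts and whose sections converge to $z$. Then your Step 3 correctly shows that $\eta$ depends only on $z$: uniform quasigeodesic rays with the same endpoint have Hausdorff-close tails, and $p$ is $1$-Lipschitz. Step 2 is also essentially fine once you use the coarse ordering of qi sections in the ladder: two sections can only swap order in the lines $l_{b'}$ where they are uniformly close. This ordering is what forces a section through $l_0(t)$, $|t|<N$, past the short segment of the pair $l_0(\pm N)$. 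It is also what gives the nesting you use in Step 1(5).

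The genuine gap is Step 1(2): flaring does not produce a neck. Flaring says that once $d_{b'}(\Sigma_{-n}(b'),\Sigma_n(b'))$ exceeds a threshold, it grows exponentially in at least one direction along every geodesic of $B$. This gives quasiconvexity of $U_C=\{b' : d_{b'}(\Sigma_{-n}(b'),\Sigma_n(b'))\le C\}$ when $U_C\neq\emptyset$, but says nothing about nonemptiness. For example, in the universal cover of a hyperbolic pseudo-Anosov mapping torus, take a long fiber segment whose stable and unstable measures are both large. The qi sections through its endpoints diverge in both directions of $B=\mathbb{R}$ and are never $C$-close, even though $X$ is hyperbolic.

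So the existence of the points $b'_n$ is exactly where $\partial i(\xi_1)=\partial i(\xi_2)$ must enter. Your outline uses that hypothesis only in (4), to push necks that are assumed to exist off to infinity. Item (3), that $[l_0(-n),l_0(n)]_X$ fellow-travels the three-piece path, is likewise only asserted. What is missing is an argument that converts the escape of $[l_0(-n),l_0(n)]_X$ from compact sets into uniformly short flowed copies of $l_0([-n,n])$ over points of $B$ tending to infinity. One natural start is to use quasiconvexity of the ladder to place these geodesics near it, and then pull nearby ladder points back to $F_b$ along qi sections. This is the bundle analogue of Mitra's theorem that Cannon--Thurston identifications are leaves of ending laminations, and it is the real content of the cited lemma.

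Your fallback in the last paragraph does not supply this step either. Running the one-dimensional argument in $p^{-1}(\gamma)$ for rays $\gamma$ from $b$ presupposes a ray along which $\xi_1,\xi_2$ are already identified in $p^{-1}(\gamma)$, and producing such a ray is essentially the conclusion you are after.
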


We collect together a few observations that we will need below. These are analogs of the corresponding statements in Section~\ref{sec-bflow}.
\begin{lemma}\label{lem-omn} We continue with the setup and notation of Definition~\ref{def-ladder-mgb}. 
\begin{enumerate}
\item Given any point $w$ in a bi-infinite ladder $\L(\xi_1,\xi_2)$ over $\al$, there is a qi section $\widetilde{\al}$ of $\al$ through $w$ such that $\widetilde{\al} \subset \L(\xi_1,\xi_2)$ (see Lemma \ref{lem-lift in the bi infinte ladder} for instance, the argument is unchanged in the present context).
\item $X_{\al}=p^{-1}(\al)$ is hyperbolic (Theorem \ref{thm-hyp MB}).
\item Let $\L(\xi_1,\xi_2)$ be as in Proposition~\ref{prop-unique ray in base}.
Then $\L(\xi_1,\xi_2)\subset X_{\al} \subset X$ is also a contracting ladder in $ X_{\al}$. 
\item The inclusions $F_b\to X_{\al}$, and $X_{\al}\to X$ admit Cannon--Thurston maps by Theorem \ref{thm-existence CT MB}.
\end{enumerate}
\end{lemma}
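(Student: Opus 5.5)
The statement is a collection of four observations. I would verify each by reducing it to an earlier result, using two standing facts. First, each fiber $F_{b_n}$ is $\dl_0$-hyperbolic. Second, adjacent fibers are related by the natural $k_{\ref{lem-natural map QI}}(f)$-quasi-isometries $\phi:F_{b_n}\to F_{b_{n+1}}$ of Lemma~\ref{lem-natural map QI}, in which each $x$ is joined to $\phi(x)$ by an edge of $X$.

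For (1), let $w\in l_n$. The image $\phi(l_n)$ is a uniform quasigeodesic in $F_{b_{n+1}}$ with the same endpoints as $l_{n+1}$. By stability of quasigeodesics in $F_{b_{n+1}}$, the point $\phi(w)$ is within a uniform fiber distance $C$ of some $w'\in l_{n+1}$.
\begin{itemize}
\item Setting $\til\al(b_{n+1})=w'$ gives $d_X(w,w')\le 1+C$, and the path from $w$ to $w'$ uses one edge of $X$ plus a path inside $F_{b_{n+1}}$.
\item Going backwards to $b_{n-1}$ works the same way, using the natural map $F_{b_n}\to F_{b_{n-1}}$, which is a coarse inverse of $\phi$ and induces the inverse boundary map.
\end{itemize}
Inducting in both directions defines $\til\al$ on all of $\al$. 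Since $p$ is $1$-Lipschitz, $d_X(\til\al(b_m),\til\al(b_n))\ge|m-n|$, while the upper bound is $(1+C)|m-n|$. Hence $\til\al$ is a uniform qi section contained in $\L(\xi_1,\xi_2)$, exactly as in Lemma~\ref{lem-lift in the bi infinte ladder}.

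For (2), a geodesic ray $\al$ is a $1$-qi embedded subgraph of $B$, so Theorem~\ref{thm-hyp MB} gives that $X_\al$ is $\dl_1$-hyperbolic.

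For (4), the second inclusion $X_\al\to X$ is Theorem~\ref{thm-existence CT MB}(2) applied with $A=\al$. For $F_b\to X_\al$, I would check that $p|_{X_\al}:X_\al\to\al$ is itself a metric graph bundle with controlled hyperbolic fibers.
\begin{itemize}
\item The fibers and their intrinsic metrics are unchanged, and condition (2) of Definition~\ref{defn-metric graph bundle} is inherited.
\item Properness holds with the same $f$, because $d_X\le d_{X_\al}$.
\item $X_\al$ is hyperbolic by (2).
\end{itemize}
Then Theorem~\ref{thm-existence CT MB}(1) applies to this bundle.

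For (3), the contracting condition of Definition~\ref{def-contractingladder} has two parts: the existence of $K$-qi sections inside the ladder, and a bound on the fiber distance $d_{F_v}$. The fiber-distance bound is intrinsic to the fibers, so it is unaffected by passing from $X$ to $X_\al$. The main issue is that a $K$-qi section in $X$ need not obviously be a qi section in $X_\al$, since $d_{X_\al}$ may be larger than $d_X$. I would handle this as follows. Take consecutive points $\gm(b_i),\gm(b_{i+1})$ of a witnessing section. They satisfy $d_X\le 2K$. Replace $\gm(b_i)$ by its edge-neighbour $\phi(\gm(b_i))\in F_{b_{i+1}}$; this is still within $X$-distance $2K+1$ of $\gm(b_{i+1})$. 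By $f$-properness of $F_{b_{i+1}}\to X$, the two points are within fiber distance $f(2K+1)$. This gives a path of length at most $1+f(2K+1)$ inside $X_\al$. The lower bound again comes from $p$ being $1$-Lipschitz on $X_\al$. So every witnessing section is a uniform qi section of $\al$ in $X_\al$, and the ladder remains contracting in $X_\al$.
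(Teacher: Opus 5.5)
Your proposal is correct and follows the paper's route: each item reduces to the same earlier results, namely the argument of Lemma~\ref{lem-lift in the bi infinte ladder}, Theorem~\ref{thm-hyp MB}, and Theorem~\ref{thm-existence CT MB} applied both to $X_\al\to X$ and to the restricted bundle $X_\al\to\al$. Your explicit check in (3) supplies a detail the paper leaves implicit: you show that a $K$-qi section in $X$ remains a uniform qi section for the intrinsic metric on $X_\al$, by passing to the edge-neighbour in the next fiber and using $f$-properness.
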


\begin{rmk}\label{rmk-mgbray=tree} We note here that
	$p: X_{\al} \to \al$ is quasi-isometric to a tree of metric spaces as follows. The vertices are given by the vertices $b_i$ along $\alpha$ and vertex spaces are $F_{b_i}$. The edge space corresponding to the edge $[b_i,b_{i+1}]$
	is also $F_{b_i}$, so that $F_{b_i} \times \{b_i\}$ is glued to $F_{b_i}$ by identity on the first factor, and $F_{b_i} \times \{b_{i+1}\}$ is glued to $F_{b_{i+1}}$ by the uniform quasi-isometry from Lemma~\ref{lem-natural map QI}.
\end{rmk}

 The following is an analog of Lemma \ref{lem-enough to count over ray}. With Lemma~\ref{lem-omn} in place, the proof is a replica and we omit it.

\begin{cor}\label{cor-preimages are same}
We continue with the hypotheses and notation of Proposition \ref{prop-unique ray in base}. Let
$z\in \partial X$ be such that $\pa i^{-1}_{F_b,X}(z)$
has cardinality more than one. Then there exists a geodesic ray 
$\alpha:[0,\infty)\map B$  starting at $b$, with $\al(\infty)$ equal to some $\eta\in\pa B$, such that the following holds. Let $X_{\al}=\Pi^{-1}(\al)$.  Let $\xi \in \partial F_b$ be such that $\pa i_{F_b,X} (\xi) = z$. 
Then $$\pa i^{-1}_{F_b,X_{\al}}(\pa i_{F_b,X_{\al}}(\xi))=\pa i^{-1}_{F_b,X}(z).$$
\end{cor}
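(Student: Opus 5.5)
We will follow the proof of Lemma~\ref{lem-enough to count over ray} step by step, using Lemma~\ref{lem-omn} in place of the tree-specific inputs. Let $\eta\in\pa B$ be the point furnished by Proposition~\ref{prop-unique ray in base} for $z$, and fix a geodesic ray $\al$ from $b$ to $\eta$. We set $X_\al=p^{-1}(\al)$. By Lemma~\ref{lem-omn}(2),(4), $X_\al$ is hyperbolic and the inclusions $F_b\to X_\al$ and $X_\al\to X$ admit Cannon--Thurston maps. By Lemma~\ref{lem-functoriality of CT map},
$$\pa i_{F_b,X}=\pa i_{X_\al,X}\circ\pa i_{F_b,X_\al}.$$

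\emph{Inclusion $\subseteq$.} Take $\xi'\in\pa F_b$ with $\pa i_{F_b,X_\al}(\xi')=\pa i_{F_b,X_\al}(\xi)$. Applying $\pa i_{X_\al,X}$ to both sides gives $\pa i_{F_b,X}(\xi')=\pa i_{F_b,X}(\xi)=z$. Hence $\xi'\in\pa i^{-1}_{F_b,X}(z)$.

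\emph{Inclusion $\supseteq$.} Take $\xi'\in\pa i^{-1}_{F_b,X}(z)$ with $\xi'\neq\xi$.
\begin{enumerate}
\item By Proposition~\ref{prop-unique ray in base}, the ladder $\L(\xi,\xi')$ over $\al$ is contracting in $X$.
\item By Lemma~\ref{lem-omn}(3), $\L(\xi,\xi')\subset X_\al$ is also contracting in $X_\al$.
\item Apply the tree analogue, Proposition~\ref{prop-flows over unique ray}, to $X_\al\to\al$. By Remark~\ref{rmk-mgbray=tree}, $X_\al\to\al$ is quasi-isometric to a tree of spaces over a ray, with edge-to-vertex maps uniform quasi-isometries. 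Under this model the ladder is carried to a contracting ladder, and every qi section lying in it converges in $\pa X_\al$. Contraction forces all such sections to be asymptotic, so they share one endpoint.
\item Pick $w\in l_0\subset\L(\xi,\xi')$ and use Lemma~\ref{lem-omn}(1) to get a qi section $\widetilde{\al}$ through $w$ inside the ladder. Its endpoint in $\pa X_\al$ equals both $\pa i_{F_b,X_\al}(\xi)$ and $\pa i_{F_b,X_\al}(\xi')$.
\end{enumerate}
We justify the last point as in the Cannon--Thurston existence arguments: the geodesic $l_0$ from $\xi$ to $\xi'$ has points deep toward $\xi$ whose qi sections converge to $\pa i_{F_b,X_\al}(\xi)$. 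Contraction makes these sections eventually uniformly close to $\widetilde\al$, so all of them share the same endpoint. Therefore $\pa i_{F_b,X_\al}(\xi')=\pa i_{F_b,X_\al}(\xi)$, which gives the reverse inclusion.

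The main obstacle is item 3 and the last step: checking that the conclusion of Proposition~\ref{prop-flows over unique ray}, that the common endpoint of qi sections in a contracting ladder equals the Cannon--Thurston image, holds inside $X_\al$. Here $X_\al$ is only quasi-isometric to a tree of spaces, so the transfer must be done carefully. My first attempt would be to pass through the quasi-isometry of Remark~\ref{rmk-mgbray=tree}. This quasi-isometry preserves fibers and boundary flows, so ladders, qi sections and Cannon--Thurston maps transfer with uniformly changed constants. Alternatively, one can argue directly: Proposition~\ref{prop-unique ray in base} applied to the bundle $X_\al\to\al$ over the hyperbolic base $\al$, with $B$ replaced by $\al$, gives the same conclusion immediately.
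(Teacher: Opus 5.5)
Your proposal is correct and is essentially the paper's argument. The paper omits this proof as a replica of Lemma~\ref{lem-enough to count over ray}: take $\eta$ and $\al$ from Proposition~\ref{prop-unique ray in base}, use Lemma~\ref{lem-omn}(3) to see that $\L(\xi,\xi')$ is contracting in $X_\al$ and hence that $\xi,\xi'$ are identified in $\pa X_\al$, and get the other inclusion from Lemma~\ref{lem-functoriality of CT map}.

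The one delicate step, ``contracting in $X_\al$ implies identified in $\pa X_\al$'', is the converse of Proposition~\ref{prop-unique ray in base} as literally stated. The paper's replica uses it implicitly, and your ``apply it to $X_\al\to\al$'' alternative also needs that converse rather than the proposition itself. Your direct route closes this most cleanly without going through common endpoints: for $p\to\xi$ and $q\to\xi'$ along $l_0$, the two contracting qi sections through $p,q$ and the $C$-short fiber segment joining their ends form a quasigeodesic quadrilateral. So a point of $[p,q]_{X_\al}$ near $w\in l_0$ would lie near a section at bounded height, which bounds $d(p,w)$ or $d(q,w)$. Hence $\langle p,q\rangle_w\to\infty$.
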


\subsection{Cannon--Thurston fibers For metric graph bundles}\label{sec-ctfibermgb}
We are now in a position to prove the analog of Theorem~\ref{thm-finite to one TMS ray} for metric graph bundles.

\begin{theorem}\label{thm-finite to one MB gen}
	Given $\dl_0\ge0$, $L'\ge0$, $D_0\ge0$ and a proper map $f:\N\to\N$ there is a constant $N=N(\dl_0,L',D_0,f)$ satisfying the following. Let $p:X\to B$ be an $f$-metric graph bundle with controlled fibers satisfying the following.
	
	\begin{enumerate}
		\item  {Each $F_b$ is a graph where all vertices have valence bounded by $D_0$}.
		
		\item The total space $X$ is $\dl_0$-hyperbolic.
	\end{enumerate}

	Then for any $b\in V(B)$, the Cannon--Thurston map $\pa i:\pa F_b\to\pa X$ is uniformly finite-to-one, i.e.\  for all $\xi\in \pa X$, $$ |\pa i^{-1}(\xi)|\le N.$$
\end{theorem}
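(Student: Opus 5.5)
Fix $b\in V(B)$ and $z\in\pa X$ with $|\pa i^{-1}(z)|\ge 2$; otherwise there is nothing to prove. The plan is to reduce to a ray of spaces and then apply Theorem~\ref{thm-finite to one TMS ray}(I).

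\textbf{Reduction to a ray.} Corollary~\ref{cor-preimages are same} gives a geodesic ray $\al$ in $B$ starting at $b$ such that
\[
\pa i^{-1}_{F_b,X}(z)=\pa i^{-1}_{F_b,X_\al}(\pa i_{F_b,X_\al}(\xi))
\]
for any $\xi$ in the fiber. So it suffices to bound the fibers of the Cannon--Thurston map $\pa i_{F_b,X_\al}$ uniformly in $b$ and $\al$.

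\textbf{Checking the hypotheses of Theorem~\ref{thm-finite to one TMS ray}(I).} Since $\al$ is a geodesic, hence a $1$-qi embedded subgraph of $B$, Theorem~\ref{thm-hyp MB} shows that $X_\al$ is $\dl_1$-hyperbolic, with $\dl_1$ depending only on $\dl_0$ and $f$. By Remark~\ref{rmk-mgbray=tree}, $X_\al$ is quasi-isometric to a tree of spaces $Y_\al\to[0,\infty)$. Its vertex and edge spaces are the fibers $F_{b_i}$, which have valence at most $D_0$ and are $\dl_0$-hyperbolic. Its edge-to-vertex maps are the identity and the $k_{\ref{lem-natural map QI}}(f)$-quasi-isometries of Lemma~\ref{lem-natural map QI}, so they are uniform qi embeddings. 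The map $X_\al\to Y_\al$ restricts to the identity on $F_b$ and is a uniform quasi-isometry, with constants depending only on $f$. Consequently:
\begin{itemize}
\item $Y_\al$ is uniformly hyperbolic;
\item the quasi-isometry induces a homeomorphism $\pa X_\al\to\pa Y_\al$ that intertwines the two Cannon--Thurston maps from $\pa F_b$;
\item the fibers of $\pa i_{F_b,X_\al}$ and $\pa i_{F_b,Y_\al}$ therefore have the same cardinality.
\end{itemize}

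\textbf{Conclusion.} Theorem~\ref{thm-finite to one TMS ray}(I), applied with constants $\max(\dl_0,\dl_1')$, $k_{\ref{lem-natural map QI}}(f)$, $D_0$ and the properness function of $Y_\al$, gives a bound $N$ depending only on $\dl_0,L',D_0,f$. Here $\dl_1'$ is the hyperbolicity constant of $Y_\al$. The properness function of $Y_\al$ is derived from $f$ via the uniform quasi-isometry. The constant $L'$ enters through Theorem~\ref{thm-hyp MB} and the existence results.

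\textbf{Main obstacle.} The delicate step is verifying the tree-of-spaces axioms for $Y_\al$ uniformly, in particular the properness condition (2) of Definition~\ref{defn-tree-of-sps}, and transferring hyperbolicity from $X_\al$. For properness I would use that $F_{b_i}\to X_\al$ is $f$-proper together with the uniform quasi-isometry $Y_\al\to X_\al$.

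Alternatively, one can avoid the transfer and rerun the proof of part (I) directly in $X_\al$:
\begin{itemize}
\item define the barycenter sections $n\mapsto Bary_{F_{b_n}}(\pa\phi_n(\xi_1),\pa\phi_n(\xi_2),\pa\phi_n(\xi_3))$;
\item these are uniform qi sections by Lemma~\ref{lem-barycenter close} applied to $\phi_n$;
\item they converge to $z$ via the contracting ladders of Proposition~\ref{prop-unique ray in base} and Lemma~\ref{lem-omn};
\item they are eventually $R$-close in the fibers by the argument of Lemma~\ref{lem-lifts imp close in fiber};
\item Proposition~\ref{prop-bdd nm barycenters in bdd ball} then bounds $|A|$ for every finite $A$ in the fiber.
\end{itemize}
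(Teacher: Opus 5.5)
Your proposal is correct and follows essentially the paper's route. You reduce to the bundle over a geodesic ray via Corollary~\ref{cor-preimages are same} and Theorem~\ref{thm-hyp MB}, pass to the tree-of-spaces model of Remark~\ref{rmk-mgbray=tree}, and invoke Theorem~\ref{thm-finite to one TMS ray}(I); the paper treats the ray case first and then reduces to it, which is only a reordering. Your explicit transfer of the tree-of-spaces axioms, hyperbolicity and Cannon--Thurston maps across the quasi-isometry fills in what the paper calls ``immediate''. One small correction: $\dl_1$ should depend on $L'$ as well as $\dl_0$ and $f$, which your final paragraph already acknowledges.
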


\begin{proof} 
For the upper bound, we first deal with the case that $B$ is a geodesic ray $[0,\infty)$ where $[0,\infty)$ has the standard graph structure with vertices at the integer points.
 By Remark~\ref{rmk-mgbray=tree}, $p:X\to[0,\infty)$  is quasi-isometric to a tree of spaces, whose vertex spaces are exactly those of $p:X\to[0,\infty)$.
 This special case is then an immediate consequence of Theorem \ref{thm-finite to one TMS ray}(I).
 
 We now deal with the upper bound in the general case. The argument is essentially the same as that for Theorem \ref{thm-finite to one TMS ray}(II). We include a sketch for completeness.

Assume that $\pa i^{-1}(\xi)$ contains more than two elements. Also, let $\xi_1 \in \pa F_b$ be such that $\pa i(\xi_1) = \xi$.
 Let $\eta \in \partial B$ be the boundary point of $B$ determined by $\xi$ as per the last statement of Proposition~\ref{prop-unique ray in base}.
By Corollary \ref{cor-preimages are same}, it suffices to consider a metric bundle $p: X_\eta \to \al$ over a geodesic ray $\al \subset B$, where $\alpha (\infty) =\eta$.  By Theorem \ref{thm-hyp MB}, $X_{\al}:=p^{-1}(\al)$ is $\dl_1$-hyperbolic for some $\dl_1$ depending only on $\dl_0$, $L'$ and $f$. Thus the restricted metric graph bundle $p:X_{\al}\to\al\sse B$ over the geodesic ray $\al$ satisfies all the conditions of the present theorem  with constants $\dl_1$, $D_0$ and $f$. 
By the special case dealt with at the beginning of the proof, the cardinality of the set $\pa i^{-1}_{F_b,X_{\al}}(\pa i_{F_b,X_{\al}} (\xi_1))$ is bounded by an integer $N = N(\dl_1,D_0,f)$.
Since $\dl_1$ depends only on $\dl_0, L',f$ the theorem follows.
\end{proof}

\begin{rmk} Since the Cannon--Thurston map $\pa i$ in the above theorem is surjective by \cite{halder-surj-CT}, it follows that
    $1 \leq  |\pa i^{-1}(\xi)|\le N.$
\end{rmk}

As a consequence of Theorem \ref{thm-finite to one MB gen} we immediately conclude the following
(see Example~\ref{eg-mgb}).

\begin{cor}\label{cor-normal}
Let $H$ be a nonelementary hyperbolic normal or commensurated subgroup of infinite index in a hyperbolic group $G$. Then the Cannon--Thurston map $\pa i:\pa H\to\pa G$ is uniformly finite-to-one.
\end{cor}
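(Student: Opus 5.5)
The plan is to realize the pair $(H,G)$ as a fiber inside an $f$-metric graph bundle with controlled hyperbolic fibers, and then quote Theorem~\ref{thm-finite to one MB gen}.

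\emph{Setting up the bundle.} Choose a finite generating set $S_H$ of $H$ and extend it to a finite generating set $S$ of $G$, so that $\Gamma_H\subset\Gamma_G$. In the normal case, let $B=\Gamma_Q$ with $Q=G/H$ and the induced generating set. In the commensurated case, let $B$ be the coset graph of Example~\ref{eg-mgb}(2), with vertices $gH$ and edges $\{gH,gsH\}$ for $s\in S$. By Example~\ref{eg-mgb} (citing \cite[Example 1.8]{pranab-mahan}, \cite[Proposition 3.14]{margolis-almostnormal} and \cite[Proposition 5.12]{NirMaMj-commen}), the natural map $p:\Gamma_G\to B$ is an $f$-metric graph bundle. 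Its fibers are the translates $g\Gamma_H$, or graphs uniformly quasi-isometric to these, possibly after a harmless modification of the graph structure.

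In the commensurated case the fiber over $gH$ with its intrinsic metric may not literally be $g\Gamma_H$. I would arrange the generating sets so that fibers are the subgraphs spanned by the cosets $gH$. This is the step where I expect most care is needed. What makes it work is that the $G$-action on $\Gamma_G$ permutes fibers isometrically, so every constant is uniform.

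\emph{Verifying the hypotheses of Theorem~\ref{thm-finite to one MB gen}.} All fibers are isometric via left multiplication by $G$. Hence they are uniformly $\dl_0$-hyperbolic, where $\dl_0$ is the hyperbolicity constant of $\Gamma_H$, and they have valence at most $D_0=2|S|$. The inclusions into $\Gamma_G$ are uniformly proper, with a single function $f$, because $H$ is finitely generated and hence $\Gamma_H\to\Gamma_G$ is proper, and translation by $g$ preserves both metrics.

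For controlled hyperbolic fibers, note that $H$ is nonelementary hyperbolic, so $\pa^3 H\neq\emptyset$ and $H$ acts on $\pa^3\Gamma_H$. Fix a barycenter $x_0$ of one triple. Then $hx_0$ is a coarse barycenter of the translated triple, so the barycenter map is $L'$-coarsely surjective with $L'$ roughly $d(x_0,1)$ plus the constant $R_{\ref{lem-barycenter}}$. Finally, $\Gamma_G$ is hyperbolic, and infinite index guarantees that $B$ is unbounded, though unboundedness is not actually needed.

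\emph{Conclusion.} Theorem~\ref{thm-finite to one MB gen} applies to the fiber over the identity coset, which is $\Gamma_H$. It gives a uniform bound $N$ on $|\pa i^{-1}(\xi)|$, where $\pa i$ is the Cannon--Thurston map of Theorem~\ref{thm-existence CT MB}(1). Since Definition~\ref{CT-map} defines the Cannon--Thurston map of $H<G$ using exactly these Cayley graphs, this proves the corollary.
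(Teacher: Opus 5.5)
Your proposal is correct and is essentially the paper's argument: the paper obtains Corollary~\ref{cor-normal} directly from Theorem~\ref{thm-finite to one MB gen} using the metric graph bundle $\Gamma_G\to B$ of Example~\ref{eg-mgb}, and you have written out the routine checks (uniform hyperbolicity, bounded valence, uniform properness, and coarse surjectivity of barycenter maps via the $G$-action and nonelementarity of $H$). One correction of emphasis: in the commensurated case the step that actually needs care is condition~(2) of Definition~\ref{defn-metric graph bundle}, not the identification of the fibers; it is arranged by enlarging $S$ by representatives of the finitely many cosets $hsH$ ($h\in H$, $s\in S$, with finiteness coming from commensuration), and this is what the results cited in Example~\ref{eg-mgb} supply.
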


\section{Applications and generalizations}\label{sec-further extension}
\subsection{New proofs of old theorems}\label{sec-newproofs}

We give a new and short proof of \cite[Theorem B]{NirMaMj-commen}.

\begin{theorem}\label{thm-lmm}
Let $p: X \to [0, \infty)$ be a metric graph bundle with controlled hyperbolic fiber $F$, where $F$ is quasi-isometric to a one-ended hyperbolic group $H$.
Then $\partial F$  admits a local cut-point. Hence,   $H$  virtually splits over a 2-ended subgroup.
\end{theorem}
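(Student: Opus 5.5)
We take $F=F_0$, the fiber over $0$, write $i:F\to X$ for the inclusion, and use the Cannon--Thurston map $\pa i:\pa F\to\pa X$ given by Theorem~\ref{thm-existence CT MB}. The plan has four steps: show $\pa i$ is surjective and not injective; show it is uniformly finite-to-one; deduce a local cut point in $\pa F$ by a connectivity argument; and then invoke Bowditch's JSJ theorem.

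\textbf{Step 1: surjective and not injective.} Surjectivity of $\pa i$ is \cite{halder-surj-CT}. For non-injectivity, note that $X$ is a metric graph bundle over a ray, so $\pa X$ has a distinguished point $\zeta$, the endpoint of qi sections over $[0,\infty)$ (by flaring these are all asymptotic). Suppose $\pa i$ were injective. Then, since $\pa F$ is compact, $\pa i$ would be a homeomorphism onto $\pa X$. Standard arguments (Mitra) then show that $F\to X$ is a qi embedding. This is impossible, because $F$ has infinite Hausdorff-codistance in $X$ and the base is unbounded. More concretely, the barycenters of a fixed triple flowed along the ray give a qi section converging to $\zeta$ (as in Lemma~\ref{lem-barycenter flow TMS over ray}), and $\zeta$ has to be the image of some ladder pair. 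Hence some $z\in\pa X$ satisfies $|\pa i^{-1}(z)|\ge2$.

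\textbf{Step 2: finite fibers.} The fiber $F$ is quasi-isometric to $H$, so after replacing $X$ by a quasi-isometric bundle we may assume $F$ has bounded valence. Theorem~\ref{thm-finite to one MB gen}, applied over $B=[0,\infty)$, then gives a uniform bound $2\le|\pa i^{-1}(z)|\le N$ for the multiple values $z$.

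\textbf{Step 3: a local cut point, the main obstacle.} Choose $z$ as in Step 1 and let $P=\pa i^{-1}(z)=\{\xi_1,\dots,\xi_k\}$ with $k\ge2$. Since $H$ is one-ended, $\pa F$ is connected and locally connected with no global cut points. The space $\pa X$ is the quotient of $\pa F$ by the closed equivalence relation whose classes are the fibers of $\pa i$. I would argue as follows. Take a small connected neighborhood $U$ of $\xi_1$, disjoint from the other $\xi_j$. If $\xi_1$ were not a local cut point, then for every such $U$ the set $U\setminus\{\xi_1\}$ would be connected near $\xi_1$. The aim is to show that this forces the identification of $\xi_1$ with $\xi_2$ to create a non-simply-connected phenomenon, or a cut pair, in $\pa X$, in the following sense. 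The images of arcs from $\xi_1$ to $\xi_2$ become loops based at $z$. Finiteness of the fibers (Step 2) guarantees that only finitely many points are glued at $z$, so the quotient is locally a wedge of finitely many neighborhoods. This should be compared with the fact that $\pa X$ minus $z$ is, by the structure over the ray, something like $\pa F\times\R$-like, which I would expect to be contractible.

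A cleaner route, which I would try first, uses the ladder picture instead. By Proposition~\ref{prop-unique ray in base}, each geodesic $(\xi_1,\xi_2)$ in $F$ gets contracted along the ray. The flowed geodesics $l_n$ become short relative to their flow lines, and the ladder $\L(\xi_1,\xi_2)$ exhibits the pair $\{\xi_1,\xi_2\}$ as a ``quasi-line'' that separates in the sense of Bowditch. Concretely, pushing forward to the fiber $F_n$ and pulling back by the quasi-isometry $\phi_n$, one obtains a pair of points of $\pa F$ close to $\{\xi_1,\xi_2\}$. The complement of this pair should be disconnected, since complementary regions flow to different sides of $l_n$ and the contraction prevents a connected set from crossing. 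Thus $\{\xi_1,\xi_2\}$ is a cut pair, and in particular $\xi_1$ is a local cut point. Making the disconnection statement rigorous, using the uniform bound $N$, is the step I expect to be hardest.

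\textbf{Step 4: splitting.} Since $\pa H\cong\pa F$ is connected, has a local cut point, and is not a circle (a circle would make $H$ Fuchsian, a case that is handled directly), Bowditch's JSJ theorem implies that $H$ virtually splits over a 2-ended subgroup.
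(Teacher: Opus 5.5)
There is a genuine gap in Step 3, which is the step that carries the content of the theorem. The missing idea is that $\partial X$ is a dendrite (the paper cites Bowditch), so it has cut points. You should take $z$ to be a \emph{cut point of $\partial X$}, not an arbitrary multiple value of $\partial i$. After that the argument is formal. If $\partial X\setminus\{z\}=U\sqcup V$ with $U,V$ open, disjoint and nonempty, then $\partial F\setminus\partial i^{-1}(z)=\partial i^{-1}(U)\sqcup\partial i^{-1}(V)$, and both pieces are nonempty because $\partial i$ is surjective. Hence $\partial i^{-1}(z)$ separates $\partial F$. It is finite by Theorem~\ref{thm-finite to one MB gen}. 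A finite set separating the connected, locally connected continuum $\partial F$ contains a local cut point: pass to a minimal separating subset and take a small connected neighbourhood of one of its points that misses the others. This is the paper's proof. It also makes your Step 1 unnecessary, since $\partial F$ has no global cut point and so a separating fiber automatically has at least two points.

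Your proposed substitutes do not close the gap.
\begin{itemize}
\item The ladder route needs $l_n$ to separate $F_n$, with the two sides persisting in $\partial F$. A bi-infinite geodesic does not separate a general one-ended hyperbolic space; that is special to the planar/surface case, where every pair in $\partial F\cong S^1$ is a cut pair anyway. Contraction of the ladder does not prevent a connected subset of $\partial F\setminus\{\xi_1,\xi_2\}$ from going around the pair.
\item More fundamentally, for an arbitrary multiple value $z$ there is no reason for $\partial i^{-1}(z)$ to separate. If $z$ is an endpoint of the dendrite $\partial X$, then $\partial X\setminus\{z\}$ is connected and the pull-back argument gives nothing.
\item Your first heuristic, that $\partial X\setminus\{z\}$ should be contractible and $\partial F\times\mathbb{R}$-like, points the wrong way. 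It is exactly the disconnection of $\partial X$ at a cut point that produces separation in $\partial F$.
\item The claim in Step 1 that all qi sections over $[0,\infty)$ converge to one distinguished point $\zeta$ is false in general; flaring is a divergence property. In equivariant examples, such as half of the universal cover of a fibered hyperbolic 3-manifold, such a $\zeta$ would be fixed by the fiber group. Then $\partial i^{-1}(\zeta)$ would be a nonempty closed invariant proper subset of $\partial F$, contradicting minimality of the action on $\partial F$.
\end{itemize}

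Steps 2 and 4 are fine and agree with the paper; your remark about arranging bounded valence is a reasonable addition.
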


\begin{proof}
	The boundary $\partial X$ is a dendrite \cite{bowditch-cutpts}. Let $i: F \to X$ denote the inclusion map, and
$\partial i: \partial F \to \partial X$ denote the associated Cannon--Thurston map. Since $H$ is one-ended, $\partial F$ is a compact, connected, locally connected set. Hence, so is
$\partial i (\partial F)$. In fact, $\partial i (\partial F)$ is
equal to $\pa X$ \cite{halder-surj-CT}.
 For any cut-point $\xi$ of 
$\pa X = \partial i (\partial F)$, $\partial i^{-1} (\xi)$  disconnects $\partial F$. By Theorem~\ref{thm-finite to one MB gen}, $\partial i^{-1} (\xi)$ is finite.
Hence 
$\partial F$  admits a local cut-point.  The last statement now follows from Bowditch's theorem on JSJ decomposition of hyperbolic groups \cite{bowditch-cutpts}.
\end{proof}

The notion of
3-manifolds that slither over the circle \cite{thurston-slither1,thurston-slither2,calegari-dunfield,calegari-zippers}
can be translated 
to the context of metric graph bundles. We use this to recover Thurston's theorem that the associated Cannon-Thurston map is finite-to-one. 

We refer the reader to \cite[Section 4]{calegari-zippers} for details on uniform quasimorphisms and 
\cite[Example 4.12]{calegari-zippers} for an elucidation  of the fact that 3-manifolds that slither in the sense of 
\cite{thurston-slither1,thurston-slither2} give examples of
 uniform quasimorphisms.  In fact the explanation in \cite[Example 4.12]{calegari-zippers} shows a bit more.
 Let $M$ be a closed hyperbolic 3-manifold slithering over the circle, and let $G=\pi_1(M)$. Let $\phi: G \to \Z$ denote the associated uniform quasimorphism.
 Let $\Gamma$ denote a Cayley graph of $G$ with respect to a finite generating set.
 Then there exists a projection $\Pi: \Gamma \to \Z$
 such that (after possibly adding some edges to $\Gamma$ while retaining its quasi-isometry type) $\Pi^{-1}(n)$ is a connected graph. Further,
 by Candel's theorem \cite{candel} $\Pi^{-1}(n)$ is uniformly quasi-isometric to the hyperbolic plane. Thus, 
 $\Pi: \Gamma \to \Z$ gives rise to a metric graph bundle
 in the sense of Definition~\ref{defn-metric graph bundle} with controlled hyperbolic fibers.
 In fact the projection map $\Pi$ in the metric graph bundle is quasi-equivariant with respect to the 
 action by $G$ on the total space and the quasi-action by $G$ on the base $\Z$
 via the quasimorphism $\phi$.

Let $\mathcal L$ denote any leaf of the pulled back foliation on $\til M$ and we assume by Candel's theorem \cite{candel}
that $\mathcal L$ is isometric to the hyperbolic plane.
 It now follows from Theorem~\ref{thm-existence CT MB} that 
 $i: \mathcal L \to \til{M}$ admits a Cannon--Thurston map. Further,
 by Theorem~\ref{thm-finite to one MB gen} this map is finite-to-one.

 We also note that Theorem~\ref{thm-lmm} shows that there cannot exist  analogs of Thurston's slithering examples in higher dimensions in the following sense. Let $M$ be a closed manifold of pinched negative curvature
 of dimension greater than 3, and let $\mathcal{F}$ be a codimension one uniform
  foliation in the sense of Thurston, i.e.\ $\Pi: \til M \to \R$ is a metric
   bundle in the sense of \cite{pranab-mahan}.
    Let $\til{\mathcal{F}}$ denote the
    pulled back foliation to
 $\til M$.
 Then no (simply connected)  leaf $\mathcal L$ of $\til{\mathcal{F}}$ can itself be 
  of pinched negative curvature 
 with respect to the metric induced from $\til M$. Else, the boundary $\pa{\mathcal{L}}$ is a sphere  of dimension greater than one, and hence cannot have local cut-points, violating Theorem~\ref{thm-lmm}.
This generalizes the well-known fact that a closed negatively curved manifold of dimension greater than 3 cannot fiber over  the circle with fibers of pinched negative curvature.

\subsection{Subtrees of spaces}\label{subsec-subtree of spaces}
Let $\Pi:X\to T$ be a tree of hyperbolic metric spaces satisfying the qi embedded condition. Further, assume that $X$ is hyperbolic. Let $S\sse T$ be a connected subtree of $T$ and $Y=\Pi^{-1}(S)$. Then  $Y$ is hyperbolic  and the inclusion $Y\to X$ admits a Cannon--Thurston map $\pa i_{Y,X}:\pa Y\to\pa X$ (Theorem~\ref{thm-existence CT TMS}).
In this subsection we generalize Theorem \ref{thm-finite to one TMS ray} as  follows.

\begin{theorem}\label{thm-subtree of spaces}
The Cannon--Thurston map $\pa i_{Y,X}:\pa Y\to\pa X$ is uniformly finite-to-one. In fact, for all $\xi\in\pa i_{Y,X}(\pa Y)$, $$|\pa i^{-1}_{Y,X}(\xi)|\le N$$ where $N$ is the constant in Theorem \ref{thm-finite to one TMS ray}.
\end{theorem}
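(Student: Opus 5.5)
We will reduce Theorem~\ref{thm-subtree of spaces} to Theorem~\ref{thm-finite to one TMS ray} by passing through a single vertex space. Fix a vertex $u\in S$ and write $i_{u,Y}:X_u\to Y$, $i_{u,X}:X_u\to X$. By Lemma~\ref{lem-functoriality of CT map}, $\pa i_{u,X}=\pa i_{Y,X}\circ\pa i_{u,Y}$. The goal is to show that every fiber $\pa i^{-1}_{Y,X}(\xi)$ injects into a fiber of $\pa i_{v,X}$ for a suitable vertex $v\in S$. Theorem~\ref{thm-finite to one TMS ray}(II) then gives the bound $N$.

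First, I would describe $\pa Y$ in the standard way for trees of hyperbolic spaces. Each point of $\pa Y$ is either the image under $\pa i_{v,Y}$ of a point of $\pa X_v$ for some vertex $v\in S$, or it is a point coming from an end of $S$, namely the limit of a qi section over a ray $[u,\eta)\sse S$ with $\eta\in\pa S$. This description is in \cite{ps-kap}.

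Now take a finite set $A\sse\pa i^{-1}_{Y,X}(\xi)$ with $|A|\ge 3$. The key step is to find a single vertex $v\in S$ with $A\sse\pa i_{v,Y}(\pa X_v)$. Once this holds, choose preimages $\tilde a\in\pa X_v$ of the points $a\in A$. These are distinct and all map to $\xi$ under $\pa i_{v,X}=\pa i_{Y,X}\circ\pa i_{v,Y}$. Hence $|A|\le|\pa i_{v,X}^{-1}(\xi)|\le N$.

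To get a common $v$, I would argue as follows. Suppose two distinct points $a,b\in A$ are identified in $\pa X$.
\begin{itemize}
\item If $a=\pa i_{v,Y}(\tilde a)$ and $b=\pa i_{w,Y}(\tilde b)$ come from vertex spaces, then Proposition~\ref{prop-flows over unique ray} applied in $X$ (after flowing $\tilde b$ to $X_v$ along $[w,v]$) gives a contracting ladder over a ray $[v,\eta)$. Points of $A$ arising from vertex spaces must then flow to a common vertex space.
\item If this ray leaves $S$ at the first vertex $v'$ outside $S$, the boundary flows still exist at the last vertex of $S$ on the ray. So one can replace $v$ by that last vertex, which lies in $S$.
\end{itemize}

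The main obstacle is the case where some point of $A$ is an end-type point from $\pa S$, or where the ray $[v,\eta)$ stays inside $S$. In that case there may be no single vertex space of $S$ whose boundary maps onto all of $A$. I would handle it by showing that at most one ray direction is involved, via the uniqueness of $\eta$ in Proposition~\ref{prop-flows over unique ray}. Then I would rerun the barycenter argument of Theorem~\ref{thm-finite to one TMS ray}(I) directly for triples in $A$ flowed far out along that ray, with end-type points treated as limits of qi sections. If end-type points cannot be absorbed this way, I expect that a point $\eta\in\pa S$ with $\eta\in\pa T$ gives a single point of $\pa X$ which is injective on those ends. That would show each fiber contains at most one end-type point, costing at worst $N+1$. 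Matching the constant $N$ exactly would require showing that end points never share a fiber with flowed vertex points. I would try to prove this using the contracting ladder, whose qi sections converge to the boundary of $X_\eta$ rather than to an end of the tree.
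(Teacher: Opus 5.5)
Your reduction matches the paper's: find a single vertex $v\in S$ with $A\subseteq \partial i_{X_v,Y}(\partial X_v)$, then combine $\partial i_{X_v,X}=\partial i_{Y,X}\circ\partial i_{X_v,Y}$ with Theorem~\ref{thm-finite to one TMS ray}. But the existence of such a $v$, which is the whole content, is not proved, and the idea that makes it work is missing.

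That idea is Proposition~\ref{prop-geo line in a single vertex} (Kapovich--Sardar). If $\eta_1\neq\eta_2\in\partial Y$ have the same image in $\partial X$, then a bi-infinite geodesic in $Y$ joining them lies uniformly close to a bi-infinite geodesic in a single vertex space $X_u$ with $u\in S$. Hence both $\eta_1,\eta_2$ lie in $\partial i_{X_u,Y}(\partial X_u)$. This removes your ``end-type'' case entirely: every point of a fiber of cardinality at least two comes from a vertex space of $S$.

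Without this, your treatment of ends is only a hope. Rerunning the barycenter argument with ends ``treated as limits of qi sections'' has no meaning, since barycenters are defined only for triples in some $\partial X_n$. The fallback bound $N+1$ rests on an unproved injectivity claim. So neither the constant $N$ nor even finiteness is established.

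The vertex-type case also has a gap. Your first bullet assumes that $\tilde b\in\partial X_w$ flows all the way to $\partial X_v$, and nothing guarantees this. Proposition~\ref{prop-flows over unique ray} only applies once both points already sit in one vertex boundary. The ray $[v,\eta)$ it produces, and whether that ray leaves $S$, are irrelevant here.

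The correct tool is Proposition~\ref{prop-bdry flow in beween}. Two points of $\partial X_u$ and $\partial X_v$ with the same image in $\partial X$ both flow to some vertex $w\in[u,v]$, and $w$ lies in $S$ because $S$ is a subtree.

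Passing from pairs to an arbitrary finite $A$ then needs an induction on $|A|$ (Lemma~\ref{lem-pull in a single vertex}). Suppose $A\setminus\{\xi_{n+1}\}$ is carried by $\partial X_v$ and $A\setminus\{\xi_1\}$ by $\partial X_u$. Then the common points flow along all of $[u,v]$. Proposition~\ref{prop-bdry flow in beween} applied to $\xi_1,\xi_{n+1}$ gives a vertex of $[u,v]$ carrying all of $A$. Your sentence ``points of $A$ arising from vertex spaces must then flow to a common vertex space'' is exactly this lemma, asserted without proof.
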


We start with the following proposition due to Kapovich--Sardar. It allows us a key reduction to the case already dealt with in Theorem \ref{thm-finite to one TMS ray}.

\begin{prop}\textup{(\cite[Proposition $8.49$]{ps-kap})}\label{prop-geo line in a single vertex}
	There exists $R\ge0$ such that the following holds.
	Let  $\eta_1 \neq \eta_2\in \pa Y$ be such that $\pa i_{Y,X} (\eta_1)=\pa i_{Y,X} (\eta_2)$.
	Let
	$\al$ be a bi-infinite geodesic in $Y$ joining $\eta_1,\eta_2$. Then there exists a vertex $u$ of $S$ and a bi-infinite geodesic $\bt$ in $X_u$ such that $\al,\bt$ lie within an $R-$neighborhood of each other.
\end{prop}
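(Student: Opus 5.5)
The plan is to show first that $\Pi(\alpha)$ is coarsely bounded in $S$, and then to push both ends of $\alpha$ into a single vertex space by boundary flow. Throughout, $\alpha$ is a bi-infinite geodesic in $Y$ joining $\eta_1\neq\eta_2$ with $\pa i_{Y,X}(\eta_1)=\pa i_{Y,X}(\eta_2)=:\zeta$, and $\alpha(n)\to\zeta$ in $X\cup\pa X$ as $n\to\pm\infty$.

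\textbf{Step 1: coarse structure of $\alpha$.} I would use the Mitra/Kapovich--Sardar description of geodesics in a tree of spaces. Up to uniform Hausdorff distance, $\alpha$ is a concatenation of geodesic pieces lying in vertex spaces $X_v$, $v\in S$. Consecutive pieces are joined through edge spaces along uniform qi sections over segments of $S$, built from ladders as in Lemma~\ref{lem-lift in the bi infinte ladder}. This reduces the problem to two things: showing that $\Pi(\alpha)$ essentially concentrates at one vertex, and showing that each end of $\alpha$ lies, up to bounded distance, in the same vertex space.

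\textbf{Step 2: $\Pi(\alpha)$ does not cross edges essentially.} Suppose $\alpha$ crosses an edge $e=[v,w]$ of $S$ in an essential way, meaning that one end of $\alpha$ lies over each of the two components of $T\setminus\{m_e\}$. Since $\Pi$ is $1$-Lipschitz and $T$ is a tree, $\Pi^{-1}(m_e)=X_e$ coarsely separates $X$. Hence every $X$-geodesic $[\alpha(-n),\alpha(n)]_X$ passes within bounded distance of $X_e$. Because $\alpha(\pm n)\to\zeta$, these $X$-geodesics leave every compact set, so their intersection points with $X_e$ also go to infinity. Using hyperbolicity of $\Pi^{-1}$ of the appropriate subtrees (Theorem~\ref{thm-existence CT TMS}) together with the qi embeddedness of $X_e$, this forces $\zeta$ to be the image of a point of $\pa X_e$, and both ends of $\alpha$ to be boundary flows of points of $\pa X_e$ into the two sides. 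That would let us replace $\alpha$ by a uniformly close geodesic that does not cross $e$, contradicting the assumption that the crossing was essential. Iterating over edges, each end of $\alpha$ determines a vertex $v_i$ of $S$ with $\eta_i$ flowing from $\pa X_{v_i}$, and the tree $[v_1,v_2]$ is collapsed to a single vertex.

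\textbf{Step 3: conclusion.} With $u$ the common vertex, let $\bt$ be the geodesic in $X_u$ joining the flowed endpoints. Stability of quasigeodesics in $Y$ gives $d_{Haus}(\alpha,\bt)\le R$, since $X_u\to Y$ is uniformly proper on ladders by Lemma~\ref{lem-barycenter close}-type arguments.

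The main obstacle is Step 2, specifically obtaining a \emph{uniform} $R$. The separation argument must give an honest quantitative bound, namely that geodesics crossing an edge essentially have $X$-geodesics between their far points staying a bounded distance from a fixed point. To get this I would try first the Kapovich--Sardar flow-space/ladder retraction, which is Lipschitz onto a hyperbolic subspace.
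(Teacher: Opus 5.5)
The paper gives no proof of this statement; it quotes \cite[Proposition 8.49]{ps-kap}. Measured against what that result needs, your outline has two genuine gaps. The first is in Step 2, whose key inference does not follow. Your separation argument does correctly show the following: if the two ends of $\alpha$ eventually lie over different sides of $m_e$, then the $X$-geodesics $[\alpha(-n),\alpha(n)]_X$ meet $X_e$ in points tending to $\zeta$, so $\zeta$ is in the image of $\partial X_e\to\partial X$. But Cannon--Thurston maps are not injective. Knowing that $\zeta$ has \emph{some} preimage in $\partial X_e$ says nothing about the particular points $\eta_1,\eta_2\in\partial Y$. It does not make them images of points of $\partial X_e$, let alone boundary flows of such points. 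So nothing supports replacing $\alpha$ by a uniformly close geodesic avoiding $e$, and iterating over possibly infinitely many edges gives no uniform constant. Note also that the conclusion forces $\Pi(\alpha)\subseteq B_T(u,R)$, since $\Pi$ is $1$-Lipschitz. A uniform bound on $\Pi(\alpha)$ is therefore part of what must be proved. That includes ruling out ends of $\alpha$ escaping to $\partial S$ and long excursions over a bounded subtree, and your notion of ``essential crossing'' does not address either.

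The second gap is that Step 3 is false as justified. Vertex spaces are only uniformly properly embedded in $Y$, and are typically exponentially distorted. So a geodesic of $X_u$ is not a quasigeodesic of $Y$, and stability of quasigeodesics does not apply; neither properness nor Lemma~\ref{lem-barycenter close} supplies this. Worse, the conclusion depends on which vertex is chosen, and your procedure does not determine it. Here is an example. Take $X=\widetilde M$ for a closed hyperbolic $3$-manifold fibering over the circle, with $T=\mathbb{R}$ and $S=[0,\infty)$. Let $\ell\subset X_0$ be a leaf of the invariant lamination that is stretched by the forward flow. Its endpoints give points $\eta_1\neq\eta_2$ of $\partial Y$, distinct because the ladder over $[0,\infty)$ is not contracting. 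These points are identified in $\partial X$, and both lie in the image of $\partial i_{X_n,Y}$ for every $n\geq 0$. Since $\Pi$ is $1$-Lipschitz, $\alpha$ cannot be $R$-Hausdorff close to both flowed geodesics $\ell_m\subset X_m$ and $\ell_n\subset X_n$ once $|m-n|>2R$. So any argument that works for an arbitrary ``common vertex'', as yours does, must fail. What is missing is choosing $u$ correctly and then proving that the geodesic there is uniformly quasiconvex in $Y$. Roughly, the identification in $\partial X$ should come from a ladder contracting along a unique ray (Proposition~\ref{prop-flows over unique ray}). Since $\eta_1\neq\eta_2$ in $\partial Y$, that ray must leave $S$, and $u$ should be its last vertex in $S$. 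One must then show that $\beta\subset X_u$ is uniformly quasiconvex in $Y$. This requires the Kapovich--Sardar ladder and flow-space analysis, not properness.
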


{Proposition \ref{prop-geo line in a single vertex} roughly says that multiple points of the map
$\pa i_{Y,X}$ are witnessed in vertex spaces}. We will use induction on 
the cardinality of $\pa i_{Y,X}^{-1} (\xi)$ to obtain a version of Proposition \ref{prop-geo line in a single vertex} for finitely many $\eta_i$'s instead of 2. Towards this, we need the following.

\begin{prop}\textup{(\cite[Proposition 2.3]{ps-conical-c})}\label{prop-bdry flow in beween}
	Let $\al$ and $\bt$ be geodesic rays in $X_u$ and $X_v$ respectively such that $\pa i_{X_u,X}(\al(\infty))=\pa i_{X_v,X}(\bt(\infty))$. Then there exists a vertex $w\in[u,v]$ such that both $\al(\infty)$ and $\bt(\infty)$ admit boundary flow to $\pa X_w$.
	(Here, $[u,v] \subset T$ is the geodesic joining $u, v$.)
\end{prop}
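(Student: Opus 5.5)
The plan is to compare the maximal subtrees of $T$ over which the two boundary points flow, and to argue by contradiction that these subtrees must meet on $[u,v]$. For $\xi=\al(\infty)\in\pa X_u$, let $T_\xi\subseteq T$ be the set of vertices $w$ such that $\xi$ admits a boundary flow to $\pa X_w$; define $T_{\xi'}$ similarly for $\xi'=\bt(\infty)\in\pa X_v$. By Definition~\ref{defn-bflow} these sets are subtrees, with $u\in T_\xi$ and $v\in T_{\xi'}$. The statement is exactly that $T_\xi\cap T_{\xi'}\cap[u,v]\neq\emptyset$. So assume the contrary. Because both sets are subtrees, there is then an edge $e=[w_1,w_2]\subset[u,v]$ such that $T_\xi\cap[u,v]$ lies on the $u$-side of $e$, $T_{\xi'}\cap[u,v]$ lies on the $v$-side, and at least one of $\xi,\xi'$ fails to flow across $e$.

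First I will locate the common image $\zeta:=\pa i_{X_u,X}(\xi)=\pa i_{X_v,X}(\xi')$. Removing the open edge strip over $e$ cuts $T$ into two subtrees $S_1\ni u$ and $S_2\ni v$. Set $Y_j=\Pi^{-1}(S_j)$. By Theorem~\ref{thm-existence CT TMS}, each $Y_j$ is hyperbolic and admits a Cannon--Thurston map into $X$. By Lemma~\ref{lem-functoriality of CT map}, $\zeta$ lies in $\pa i_{Y_1,X}(\pa Y_1)\cap\pa i_{Y_2,X}(\pa Y_2)$. The key geometric claim is that this intersection is contained in the image of $\pa X_e$, or more precisely of $\pa X_{w_1}$ (respectively $\pa X_{w_2}$) along $\pa f_{e,w_1}(\pa X_e)$ (respectively $\pa f_{e,w_2}(\pa X_e)$). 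I would prove this by taking sequences $y^1_k\in Y_1$ and $y^2_k\in Y_2$ converging to $\zeta$. A geodesic in $X$ from a basepoint to $y^2_k$ must cross $\Pi^{-1}(m_e)$. Since $\langle y^1_k,y^2_k\rangle\to\infty$, the crossing points escape to infinity in $X$ and accumulate at $\zeta$. The edge space is $L_0$-qi embedded in the vertex spaces, and the embeddings are $f$-proper, so these crossing points give a sequence in $X_e$. I will then use the standard Mitra-type fact that boundary flow is exactly what is needed to carry a boundary point across an edge.

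Second, I will transport this back to $\xi$. By the ladder construction of Mitra underlying Theorem~\ref{thm-existence CT TMS}, $\zeta=\pa i_{X_u,X}(\xi)$ is the endpoint of a quasigeodesic in $X$ built from the ladder of $\al$ over $T_\xi$. This ladder lies in $\Pi^{-1}(T_\xi)$, and its projection to the edge spaces leaving $T_\xi$ is coarsely bounded in each of them. Concretely, $\xi$ either flows along a ray of $T_\xi$, giving a qi section ending at $\zeta$ (Lemma~\ref{lem-lift in the bi infinte ladder}), or its flow terminates at a last vertex space where $\zeta$ is represented by a geodesic ray not asymptotic to any incident edge space. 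In both cases, the quasigeodesic representing $\zeta$ eventually stays in a region that is separated from $S_2$ by the strip over $e$, at distance tending to infinity from $\Pi^{-1}(m_e)$. This is because $\xi$ does not flow across $e$, so the ladder meets $f_{e,w_1}(X_e)$ only in a bounded set. The same statement for $\xi'$, with the roles of $S_1$ and $S_2$ reversed, then forces a geodesic in $X$ between far-out points of the two representatives to cross a bounded region of $\Pi^{-1}(m_e)$. That contradicts the fact that both representatives converge to the same $\zeta$.

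The main obstacle is the second step. The vertex spaces are not qi embedded in $X$, so $\al$ and $\bt$ are not quasigeodesics in $X$, and one cannot simply say that they are eventually close. The whole argument depends on replacing $\al$ and $\bt$ by Mitra's ladder quasigeodesics and controlling how those meet the edge strip over $e$. The case that needs the most care is when the flow of $\xi$ stops at a vertex strictly between $u$ and $e$. There I would first try the quasiconvexity of $f_{e,w_1}(X_e)$ in $X_{w_1}$, together with the assumption that $\xi$ is not in the limit set of this subspace, to get a uniform bound on the diameter of the nearest-point projection of the ladder to $X_e$.
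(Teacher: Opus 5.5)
The paper does not prove this statement; it quotes it from \cite[Proposition 2.3]{ps-conical-c}. Your argument therefore has to stand on its own, and it has a genuine gap at its central step.

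\textbf{What works.} $T_\xi$ and $T_{\xi'}$ are subtrees. If they missed each other on $[u,v]$, you could take $e=[w_1,w_2]$ with $w_1$ the last vertex of $T_\xi$ on $[u,v]$; this forces $T_\xi\subseteq S_1$ and $T_{\xi'}\subseteq S_2$. You would also get a contradiction from two quasigeodesic rays of $X$ converging to $\zeta$, one eventually in $Y_1$ and one eventually in $Y_2$, each leaving every bounded neighbourhood of $\Pi^{-1}(m_e)$. The reason is that asymptotic quasigeodesic rays are eventually uniformly close, while every path from $Y_1$ to $Y_2$ meets $\Pi^{-1}(m_e)$.

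\textbf{The gap.} You need such a ray for $\xi$: a geodesic ray of $X$ from $X_u$ to $\partial i_{X_u,X}(\xi)$ that is eventually in $Y_1$ and far from the strip over $e$, whenever $\xi$ does not flow across $e$. That separation statement is essentially the whole content of the proposition, and your second step asserts it rather than proves it. Step~1 does not help. Knowing $\zeta\in\Lambda_X(X_e)$ contradicts nothing by itself, and the ``standard Mitra-type fact'' you then invoke is the statement being proved.

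\textbf{Why the justifications in Step~2 fail.}
\begin{enumerate}
\item The only ladder known to be quasiconvex in $X$, and hence to carry a quasigeodesic to $\zeta$, is Mitra's ladder $B_\alpha$. It does \emph{not} lie in $\Pi^{-1}(T_\xi)$. Whenever some $\alpha_w$ has a bounded but large projection onto an edge space leaving $T_\xi$ (including $f_{e,w_1}(X_e)$), that projection is flowed across. The resulting finite segments can generate infinite sub-ladders deep inside $S_2$. Conversely, $\bigcup_{w\in T_\xi}\alpha_w$ is not known to be quasiconvex.
\item Lemma~\ref{lem-lift in the bi infinte ladder} produces qi sections inside a ladder but gives no control on their endpoints. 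The equality $\sigma(\infty)=\partial i(\xi)$ in Proposition~\ref{prop-flows over unique ray} is special to contracting ladders of two distinct points in one fiber of $\partial i$. For a single point flowing along a ray there is no reason any section converges to $\partial i(\xi)$; in a bundle over $[0,\infty)$, for instance, every point flows along the whole base.
\item $T_\xi$ can branch, so there is no ``last vertex space''. Also, a geodesic ray in a vertex space is not a quasigeodesic of $X$, so its moving away from the strip yields no contradiction.
\end{enumerate}

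\textbf{What would close it.} You would need to prove the separation statement directly. One route is to show that for every $R$ the set $B_\alpha\cap N_R(\Pi^{-1}(m_e))$ is bounded. This would use that the projection of $\alpha_{w_1}$ to $f_{e,w_1}(X_e)$ is bounded, since $\partial\Phi_{uw_1}(\xi)\notin\partial f_{e,w_1}(\partial X_e)$, and that points of $B_\alpha$ lie on uniform qi sections inside $B_\alpha$. Then geodesics tracking $B_\alpha$ can pass between $Y_1$ and $Y_2$ only through a fixed bounded ``neck'', so a geodesic ray to $\zeta$ is eventually on one side and far from the strip. You would still have to rule out its ending up in $Y_2$, for instance because the geodesics $[x_0,\alpha(t)]$ would then cross the neck twice with a long excursion in between. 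Your final paragraph points in this direction but leaves exactly this step undone.
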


We are now in a position to prove the following generalization of Proposition \ref{prop-geo line in a single vertex}.

\begin{lemma}\label{lem-pull in a single vertex}
Suppose $A$ is a finite subset of $\pa i^{-1}_{Y,X}(\xi)$ for some $\xi\in\pa i_{Y,X}(\pa Y)$. Then there is a vertex $w$ of $S$ such that  $A$ is contained in the image of $\partial i_{X_w,Y}$.
\end{lemma}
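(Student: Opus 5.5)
We argue by induction on $|A|$, as the paper indicates right before Proposition~\ref{prop-bdry flow in beween}. If $|A|\le 1$ the claim is essentially the statement that every boundary point of $Y$ lies in some $\partial i_{X_w,Y}(\partial X_w)$. We will handle $|A|=1$ together with $|A|=2$ by taking the second point arbitrarily in the fibre. If the fibre is a singleton, we instead use that a point of $\partial Y$ which is not in the image of any vertex boundary corresponds to an end of $S$; such a point then has singleton fibre and is excluded from the situation of interest. For $|A|= 2$, say $A=\{\eta_1,\eta_2\}$, Proposition~\ref{prop-geo line in a single vertex} gives a vertex $u\in S$ and a bi-infinite geodesic $\beta\subset X_u$ at Hausdorff distance at most $R$ from the geodesic $\alpha\subset Y$ joining $\eta_1,\eta_2$. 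Hence the endpoints of $\beta$ in $\partial X_u$ map under $\partial i_{X_u,Y}$ to $\eta_1,\eta_2$. Here we use that the qi embedding and CT map $X_u\to Y$ carry endpoints of $\beta$ to the endpoints of any geodesic fellow-travelling it.

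For the inductive step, write $A=A'\cup\{\eta\}$ with $|A'|\ge 2$. By induction there is a vertex $u\in S$ with $A'\subset \partial i_{X_u,Y}(\partial X_u)$. By the case $|A|=2$ applied to $\{\eta,\eta'\}$ for some $\eta'\in A'$, there is a vertex $v\in S$ with $\eta\in\partial i_{X_v,Y}(\partial X_v)$. Pick preimages $\zeta\in\partial X_u$ of some point of $A'$ and $\zeta'\in\partial X_v$ of $\eta$. These have the same image under $\partial i_{X_u,X}$ and $\partial i_{X_v,X}$, by Lemma~\ref{lem-functoriality of CT map} and the fact that everything maps to $\xi$. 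Apply Proposition~\ref{prop-bdry flow in beween} to rays representing them. This gives a vertex $w\in[u,v]\subset S$, using convexity of the subtree $S$, to which both $\zeta$ and $\zeta'$ flow.

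The remaining issue is that we need \emph{all} of $A'$, not just one point, to flow to $X_w$. We will argue that all preimages in $\partial X_u$ of points of $A'$ flow along $[u,w]$ simultaneously. The key input is Proposition~\ref{prop-flows over unique ray}, applied to the tree of spaces $Y\to S$ (hyperbolic by Theorem~\ref{thm-existence CT TMS}) and to the fibre of $\partial i_{X_u,Y}$ over the point of $A'$. More precisely, we use the fact that multiple points of the Cannon--Thurston map from a vertex space flow along a unique ray determined by the image point.

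Concretely, we intend to apply Proposition~\ref{prop-bdry flow in beween} pairwise: for each $a\in A'$ with preimage $\zeta_a\in\partial X_u$ and for $\zeta'$, there is a first vertex $w_a\in[u,v]$ to which both flow. Since flowing to a vertex implies flowing to every vertex between it and the start (the flow is defined edge by edge), $\zeta'$ flows from $v$ towards $u$ down to some furthest vertex $w^*$, and every $w_a$ lies in $[w^*,v]$. Symmetrically, each $\zeta_a$ flows from $u$ at least as far as $w_a$, hence to $w^*$. Thus $w:=w^*$ works: all of $A$ lies in $\partial i_{X_w,Y}(\partial X_w)$, using that flowed points have the same image under the CT map to $Y$ (the edge-space inclusions commute up to bounded distance).

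The main obstacle is this last simultaneity step. It requires justifying that the boundary flow of $\zeta'$ toward $u$ is an initial segment of $[v,u]$. It also requires that Proposition~\ref{prop-bdry flow in beween} yields a $w$ to which both points flow \emph{from their own vertices} along $[u,v]$, rather than some weaker statement. If that reading fails, we will fall back on the ladder description of Proposition~\ref{prop-flows over unique ray} in $Y$ to control where the flows of the $\zeta_a$ terminate.
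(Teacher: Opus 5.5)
Your proof is correct. It shares the paper's skeleton: induction on $|A|$, the base case from Proposition~\ref{prop-geo line in a single vertex}, and Proposition~\ref{prop-bdry flow in beween} in the inductive step. Where it differs is in how you get one vertex that works for every point at once.

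\emph{The paper's route.} It applies the induction hypothesis twice, to $A\setminus\{\xi_{n+1}\}$ and to $A\setminus\{\xi_1\}$, obtaining vertices $v$ and $u$. The shared points $\xi_2,\dots,\xi_n$ then lie in the image of $\partial X_w$ for every $w\in[u,v]$. A single application of Proposition~\ref{prop-bdry flow in beween} to the leftover pair $\xi_1,\xi_{n+1}$ then produces the required $w$.

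\emph{Your route.} You use the hypothesis once together with the two-point case, apply Proposition~\ref{prop-bdry flow in beween} to every pair $(\zeta_a,\zeta')$, and exploit the order on $[u,v]$. This works exactly as written; equivalently, you could take the $w_a$ nearest to $u$.

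The two worries in your last paragraph are not real obstacles. That the vertices to which $\zeta'$ flows form an initial segment of $[v,u]$ is built into Definition~\ref{defn-bflow}. Proposition~\ref{prop-bdry flow in beween} does assert flows from $u$ and from $v$ respectively, since boundary flow is only defined along geodesics of $T$. So the detour through Proposition~\ref{prop-flows over unique ray} and ladders is unnecessary.

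\emph{What each buys.} Your version makes explicit a convexity fact that the paper's phrase ``this means that for all vertices $w$ in $[u,v]$\dots'' uses tacitly, and that fact is proved by precisely your initial-segment argument. The paper's version is shorter. Both are really the Helly property for subtrees. For each $\eta\in A$, the vertices $w$ with $\eta\in\partial i_{X_w,Y}(\partial X_w)$ form a subtree of $S$. These subtrees pairwise intersect by Proposition~\ref{prop-bdry flow in beween} applied in $X$, so they share a common vertex.

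One small correction: $X_u\to Y$ is only a proper embedding, not a qi embedding. Your endpoint identification in the case $|A|=2$ still holds, because $\beta$ leaves every compact set of $Y$ while staying $R$-close to $\alpha$.
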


\begin{proof}
We  induct on $|A|$, the cardinality of $A$. If $|A| = 2$, then we are done by Proposition \ref{prop-geo line in a single vertex}. 
Suppose that the statement is true when $|A| \leq n$. Let $|A| = n+1$. Let $\{\xi_1, \xi_2 \cdots \xi_{n+1}\}=A$. 

By the induction hypothesis, there exist $u,v\in V(S)$ such that $\xi_1, \xi_2, \cdots, \xi_n \in \partial X_v$ and $\xi_2, \xi_3 \cdots \xi_{n+1} \in \partial X_u$. This means that for all vertices $w$ in $[u,v]$, all the points $\xi_2,\xi_3,\cdots,\xi_n$ admit 
a boundary flow to $\pa X_w$. Further, by Proposition \ref{prop-bdry flow in beween}, we can find a vertex $w$ in $[u,v]$ such that both $\xi_1, \xi_{n+1}$ 
admit 
a boundary flow to $\pa X_w$. Hence all points $\xi_1,\xi_2,\cdots,\xi_{n+1}$ admit 
a boundary flow to $\pa X_w$.  This completes the proof.
\end{proof}

\begin{proof}[Proof of Theorem \ref{thm-subtree of spaces}]
Let $A\subseteq \partial i_{Y,X}^{-1}(\xi)$ be a finite set, then by Lemma \ref{lem-pull in a single vertex} there exists a vertex $u$ of $S$ such that $A$ is contained in the image of $\partial i_{X_u,Y}$. Hence
$$|A|\le |\partial i_{X_u,Y}^{-1}(A)|\le | \partial i_{X_u,Y}^{-1}( \partial i_{Y,X}^{-1}(\xi))|=|\partial i_{X_u,X}^{-1}(\xi)|\le N_{\ref{thm-finite to one TMS ray}}(\dl_0,L_0,D_0,f)$$
where the equality follows from Lemma \ref{lem-functoriality of CT map} and the last inequality follows from Theorem \ref{thm-finite to one TMS ray}.
\end{proof}

The following immediate group-theoretic consequence of Theorem~\ref{thm-subtree of spaces} generalizes Corollary \ref{thm-gog vertex}.

\begin{cor}\label{cor-gog-subtree}
	Suppose $(\G,\Y)$ is a finite graph of hyperbolic groups satisfying the qi embedded condition. Assume that the fundamental group $G=\Pi_1(\G,\Y)$ is hyperbolic. Let $\Y'$ be a connected subgraph of $\Y$, and $(\G,\Y')$ be the restricted graph of groups. Let $G'=\Pi_1(\G,\Y')$. {Then $G'$ is hyperbolic and the Cannon--Thurston map $\pa i:\pa G'\to\pa G$ is uniformly finite-to-one}.
\end{cor}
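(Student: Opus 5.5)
To prove Corollary~\ref{cor-gog-subtree}, I would pass to the geometric model and apply Theorem~\ref{thm-subtree of spaces}.

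First, build the tree of spaces from the Bass--Serre data of $(\G,\Y)$. Choose finite generating sets for all vertex and edge groups, and enlarge them to a finite generating set of $G$ containing all of these together with the stable letters. Then $G$ acts on the Bass--Serre tree $T$, and the Cayley graph of $G$ is quasi-isometric to a tree of spaces $\Pi:X\to T$. In this model the vertex spaces are cosets $gG_v$, with the Cayley graphs of the vertex groups, and the edge spaces are cosets of edge groups. Because there are only finitely many orbits of vertices and edges, the parameters of Definition~\ref{defn-tree-of-sps} are uniform:
\begin{itemize}
\item the vertex and edge spaces are uniformly $\dl_0$-hyperbolic, being finitely many isometry types;
\item the edge-to-vertex maps are uniform $L_0$-qi embeddings, by the qi embedded condition;
\item the inclusions are uniformly proper, since finitely generated subgroups are properly embedded;
\item all the graphs have valence bounded by some $D_0$;
\item $X$ is hyperbolic, because $G$ is.
\end{itemize}

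Next, take $S\subseteq T$ to be the subtree spanned by the lift of $\Y'$ through a base vertex, namely the Bass--Serre tree of $(\G,\Y')$ sitting inside $T$ as the $G'$-invariant component. Set $Y=\Pi^{-1}(S)$. The group $G'$ acts on $Y$ cocompactly and properly, so $Y$ is quasi-isometric to a Cayley graph of $G'$. By Theorem~\ref{thm-existence CT TMS}, $Y$ is hyperbolic, hence $G'$ is hyperbolic, and the inclusion $Y\to X$ admits a Cannon--Thurston map.

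The main point needing care is the identification of this Cannon--Thurston map with the group-theoretic one $\pa i:\pa G'\to\pa G$. The orbit maps $\Gamma_{G'}\to Y$ and $\Gamma_G\to X$ are quasi-isometries. They commute, up to bounded distance, with the inclusion $\Gamma_{G'}\to\Gamma_G$ and the inclusion $Y\to X$. The induced boundary homeomorphisms therefore intertwine the two Cannon--Thurston maps, so the two maps have fibers of the same cardinality. Theorem~\ref{thm-subtree of spaces} then bounds every fiber of $\pa i_{Y,X}$ by $N_{\ref{thm-finite to one TMS ray}}(\dl_0,L_0,D_0,f)$. This gives the uniform bound for $\pa i$ and completes the proof.
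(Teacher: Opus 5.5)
Your proposal is correct and follows the same route as the paper, which treats this corollary as an immediate consequence of Theorem~\ref{thm-subtree of spaces} applied to the Bass--Serre tree-of-spaces model, with $S$ the subtree corresponding to $\Y'$. Your identification of $\pa i_{Y,X}$ with the group-theoretic Cannon--Thurston map via the orbit quasi-isometries, together with your check of the uniform parameters, fills in the steps the paper leaves implicit.
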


\begin{remark}
Suppose $p:X\to B$ is an $f$-metric graph bundle with  controlled hyperbolic fibers and parameters $\dl_0$ and $L'$. We also assume that each fiber has vertices with valence bounded uniformly by $D_0$. Let $B'$ be a qi embedded subspace of $B$, and let $Y=p^{-1}(B')$. Note that $Y$ is uniformly hyperbolic with the induced path metric from $X$ (Theorem \ref{thm-hyp MB}) and we have the Cannon--Thurston map $\pa i_{Y,X}:\pa Y\to \pa X$ (see Theorem \ref{thm-existence CT MB} $(2)$). A similar argument to the proof of Theorem \ref{thm-finite to one MB gen} shows that the Cannon--Thurston map $\pa i_{Y,X}$ is uniformly finite-to-one. We sketch this argument below.

Let $A$ be a finite subset of $\pa i^{-1}_{Y,X}(\xi)$ and $|A|\ge2$ for some $\xi\in\pa X$. 
Now \cite[Theorem 6.25]{ps-krishna} says that given any two distinct points $\eta_1,\eta_2$ in $A$, there is a fiber $F_b$,  and points $\xi_1,\xi_2\in\pa F_b$ such that $\pa i_{F_b,Y}(\xi_i)=\eta_i$. Since adjacent fibers are (uniformly) quasi-isometric to each other (Lemma \ref{lem-natural map QI}),  all such  points $\xi_1,\xi_2$ admit boundary flows to the boundary of any fiber of $Y$. Since $\pa i_{F_b,X}=\pa i_{Y,X}\circ\pa i_{F_b,Y}$ (Lemma \ref{lem-functoriality of CT map}), the cardinality of $A$ has a uniform bound depending only on $\dl_0$, $L'$, $D_0$ and $f$ by Theorem \ref{thm-finite to one MB gen}.
\end{remark}

\subsection*{Subtrees of subspaces}
In \cite{HS-CT}, the authors study a generalization of Theorem \ref{thm-existence CT TMS}. 
We will see below that the Cannon--Thurston maps studied there are uniformly finite-to-one  as well. 
As a representative case, we present Theorem \ref{thm-CT subtree of subspaces} below, where the existence of a Cannon--Thurston map is given by \cite[Theorem C]{HS-CT}. 
Similar statements also hold for \cite[Theorems A, B and D]{HS-CT}. We refer the reader to \cite[Example 1.2]{HS-CT} for a new example to which Theorem~\ref{thm-CT subtree of subspaces} applies.

\begin{theorem}\label{thm-CT subtree of subspaces}
	Let $\Pi:X\to T$ be a tree of uniformly hyperbolic spaces satisfying the qi embedded condition such that $X$ is hyperbolic. Let $Y\sse X$. 
    Suppose that the restriction $\Pi_Y:Y\to \Pi(Y)=S\sse T$ is also a tree of uniformly hyperbolic spaces satisfying the qi embedded condition. We further assume that  for any edge $e$ of $S$ incident to a vertex $u$ of $S$, the following hold:

	\begin{enumerate}
	\item A Cannon--Thurston map $\pa i_{Y_u,X_u}:\partial Y_u\to\partial X_u$ exists (here $X_u=\Pi^{-1}(u)$, $Y_u=\Pi_Y^{-1}(u)$). Further, the Cannon--Thurston maps $\pa i_{Y_u,X_u}$  are uniformly finite-to-one, i.e.\ there exists
    $N'$ such that for all $u$, and $\xi_u \in \partial X_u$,
    $\pa i_{Y_u,X_u}^{-1} (\xi_u)$ has cardinality bounded by $N'$. 
	
	\item The inclusion of edge spaces $Y_e \to X_e$ are
    uniform qi embeddings.
	
	\item There exists $R_0 > 0$ such that the following holds.
    Let $X_{eu}:=f_{e}(X_e)$ denote the image of $X_e$ in $X_u$. Similarly, let $Y_{eu}:=f_{e}(Y_e)$.
    Let $P^{X_u}_{X_{eu}}$ (resp.\ $P^{Y_u}_{Y_{eu}}$)
	denotes a nearest-point projection of $X_u$ 
    (resp.\ ${Y_u}$)
    onto $X_{eu}$ (resp.\ ${Y_{eu}}$) in $X_u$
    (resp.\ ${Y_u}$). Then
	$$
	d_{X_u}\big(P^{X_u}_{X_{eu}}(y), P^{Y_u}_{Y_{eu}}(y)\big) \le R_0
	\text{ for any } y \in Y_u.
	$$ 
\end{enumerate}
Then $Y$ is hyperbolic and the Cannon--Thurston map $\pa i_{Y,X}:\pa Y\to\pa X$ (furnished by \cite[Theorem C]{HS-CT}) is uniformly finite-to-one.
\end{theorem}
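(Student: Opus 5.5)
The plan is to factor the inclusion $Y\to X$ through the subtree of spaces $Z:=\Pi^{-1}(S)\subseteq X$. Hyperbolicity of $Y$ and existence of $\pa i_{Y,X}$ come from \cite[Theorem C]{HS-CT}. By Theorem~\ref{thm-existence CT TMS}, $Z$ is hyperbolic and $\pa i_{Z,X}$ exists. We will also want a Cannon--Thurston map $\pa i_{Y,Z}$ with $\pa i_{Y,X}=\pa i_{Z,X}\circ\pa i_{Y,Z}$. The intention is to read this off from the proof of \cite[Theorem C]{HS-CT}: their argument is really carried out inside $\Pi^{-1}(S)$, since ladders built from $Y$ only ever use vertex spaces over $S$. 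Functoriality is then Lemma~\ref{lem-functoriality of CT map}.

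Granting this factorization, Theorem~\ref{thm-subtree of spaces} says $\pa i_{Z,X}$ has fibers of size at most $N_{\ref{thm-finite to one TMS ray}}$. It therefore suffices to show that $\pa i_{Y,Z}$ is uniformly finite-to-one, with a bound $N''$ depending on $N'$ and the parameters. The final bound will be $|\pa i^{-1}_{Y,X}(\xi)|\le N''\cdot N_{\ref{thm-finite to one TMS ray}}$.

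For $\pa i_{Y,Z}$, I plan to reduce to vertex spaces. Let $A\subseteq \pa i^{-1}_{Y,Z}(\zeta)$ be finite, and fix a vertex $u$ of $S$. Points of $\pa Y$ either come from some $\pa Y_u$ or are ends along rays of $S$. The ends of $S$-rays map injectively to $\pa Z$, or else coincide with images of a vertex-space point by the Mitra/Kapovich--Sardar description. So the main case is $A$ coming from vertex spaces.

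\begin{itemize}
\item First, I will run the induction of Lemma~\ref{lem-pull in a single vertex} for $Y$, using the analogs of Propositions~\ref{prop-geo line in a single vertex} and \ref{prop-bdry flow in beween} for the tree of spaces $Y$. This gives a vertex $w$ with $A\subseteq \pa i_{Y_w,Y}(\pa Y_w)$.
\item Next, I will lift $A$ to $A_w\subseteq\pa Y_w$. Then $\pa i_{Y_w,Y}$ maps $A_w$ into the fiber, and $\pa i_{Y_w,Z}=\pa i_{X_w,Z}\circ\pa i_{Y_w,X_w}$. Here I use that the square $Y_w\to X_w\to Z$ and $Y_w\to Y\to Z$ commutes, which holds because both are inclusions.
\item Then $|A|\le|A_w|\le |\pa i^{-1}_{Y_w,X_w}(\pa i^{-1}_{X_w,Z}(\zeta))|\le N'\cdot N_{\ref{thm-finite to one TMS ray}}$, using hypothesis (1) and Theorem~\ref{thm-finite to one TMS ray}.
\end{itemize}

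This gives a uniform bound for $\pa i_{Y,Z}$ and hence for $\pa i_{Y,X}$.

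The main obstacle is the first step: witnessing the multiple points of $\pa i_{Y,Z}$ (or directly $\pa i_{Y,X}$) in a single vertex space $Y_w$. This is the analogue of \cite[Proposition 8.49]{ps-kap} for a map between trees of spaces that is not a sub-tree-of-spaces inclusion. Hypothesis (3) is designed exactly for this: it makes $Y$-ladders uniformly close to $X$-ladders, since nearest-point projections to edge spaces are compatible. Hypothesis (2) guarantees that boundary flows in $Y$ push forward to boundary flows in $X$.

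My first attempt will be to show that a geodesic in $Y$ joining $\eta_1\ne\eta_2$ with equal image tracks a ladder of $Y$ that is contracting in $X$. I will then rerun the Kapovich--Sardar argument, with flaring in $X$ replacing flaring in $Y$, to locate the vertex $w$. If that fails, I would instead bound $|A|$ directly. The plan there is to flow boundary points along the ray $[w,\eta)$ given by Proposition~\ref{prop-flows over unique ray}, and apply the barycenter argument of Theorem~\ref{thm-finite to one TMS ray}(I) to the sets $\pa i_{Y_n,X_n}(A_n)$, losing a factor $N'$.
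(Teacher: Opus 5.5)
Your endgame matches the paper's. Once every point of the fiber is witnessed in a single $\pa Y_w$, the count $|A|\le|\pa i^{-1}_{Y_w,X_w}(\pa i^{-1}_{X_w,X}(\xi))|\le N'\cdot N_{\ref{thm-finite to one TMS ray}}$ is immediate. The detour through $Z=\Pi^{-1}(S)$ is harmless, since $\pa i_{Y,Z}$ exists by applying \cite[Theorem C]{HS-CT} to $Y\sse Z$, but it buys nothing and only worsens the constant.

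The gap is the step you yourself flag: producing that single vertex. You propose to rerun Lemma~\ref{lem-pull in a single vertex} inside $Y$ with ``analogs'' of Propositions~\ref{prop-geo line in a single vertex} and~\ref{prop-bdry flow in beween}, but no such analogs are available. Both propositions concern points whose images agree in the boundary of a tree of spaces into which a vertex space or subtree of spaces includes. You instead need statements about points of $\pa Y_u,\pa Y_v$ that are distinct in $\pa Y$ but whose images agree in $\pa Z$ or $\pa X$. Re-proving the Kapovich--Sardar contracting-ladder/flaring theory for the subtree-of-subspaces inclusion $Y\sse X$ is a substantial project that you leave as a ``first attempt.'' Your fallback (barycenters along a ray for the sets $\pa i_{Y_n,X_n}(A_n)$) presupposes that $A$ already lives in some $\pa Y_w$ and flows inside $Y$ along the ray, which is the same missing step. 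Likewise, your dismissal of non-vertex ends of $\pa Y$ is only asserted, and your own wording allows them to share an image with vertex-space points. The paper gets this from a corollary of \cite[Theorem F]{HS-CT}.

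The missing idea is to do the combinatorics on the $X$ side, where the tools already exist, and then lift back to $Y$. The paper proceeds in three steps:
\begin{enumerate}
\item It writes each $\eta_i\in A$ \emph{individually} as $\pa i_{Y_{u_i},Y}(\eta_{i,u_i})$ with $u_i\in V(S)$.
\item It sets $\xi_{i,u_i}=\pa i_{Y_{u_i},X_{u_i}}(\eta_{i,u_i})$. These all map to $\xi$ in $\pa X$, so Proposition~\ref{prop-bdry flow in beween} in $X$, combined with the induction of Lemma~\ref{lem-pull in a single vertex}, gives one $v\in V(S)$ to which all $\xi_{i,u_i}$ flow in $X$. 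In effect this is the Helly property for the subtrees of $S$ to which each $\xi_{i,u_i}$ flows.
\item It invokes \cite[Lemma 2.50, Lemma 3.33(3)]{HS-CT} to conclude that each $\eta_{i,u_i}$ then flows in $Y$ to some $\eta_{i,v}\in\pa Y_v$. Since flowed rays are at finite Hausdorff distance in $Y$, this gives $\eta_i=\pa i_{Y_v,Y}(\eta_{i,v})$.
\end{enumerate}
Step (3) is where hypothesis (3) on compatible projections enters. It runs in the opposite direction to the one you attribute to hypothesis (2). What matters is not that $Y$-flows push forward to $X$-flows, but that an $X$-flow of the image of a point of $\pa Y_u$ pulls back to a $Y$-flow of that point.
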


    \begin{proof}
    We assume that both the trees of spaces $\Pi:X\to T$ and $\Pi_Y:Y\to S$ have parameters $\dl_0$, $L_0$, $f$ and $D_0$. 
    The Cannon--Thurston map $\pa i_{Y,X}$ exists by \cite{HS-CT}[Theorem C].
    For a vertex $u\in V(S)$, we have the following commutative diagram of Cannon--Thurston maps
    (cf.\ Lemma~\ref{lem-functoriality of CT map}):
\[
\begin{tikzcd}
\partial Y_u \arrow{r}\arrow{d} & \partial Y \arrow{d} \\
\partial X_u \arrow{r} & \partial X 
\end{tikzcd}
\]

Let $\xi\in\pa i_{Y,X}(\pa Y)$, and let $A=\{\eta_1,\eta_2,\dots,\eta_n\}\sse \pa i^{-1}_{Y,X}(\xi)$, $n\ge 2$. 
        As a corollary of \cite{HS-CT}[Theorem F], for each $\eta_i$ there exists a vertex $u_i\in V(S)$ and $\eta_{i,u_i}\in \pa Y_{u_i}$ such that $\eta_i =\pa i_{Y_{u_i},Y}(\eta_{i,u_i})$. 
        Let $\xi_{i,u_i} =\pa i_{Y_{u_i},X_{u_i}}(\eta_{i,u_i})$.
        As in the proof of Lemma~\ref{lem-pull in a single vertex}, there exists a single vertex $v\in V(S)$ such that $\xi_{i,u_i}\in \pa X_{u_i}$ admits a flow to $\xi_{i,v}\in \pa X_v$.
        By repeated application of \cite{HS-CT}[Lemma 2.50] in the vertex spaces and by \cite{HS-CT}[Lemma 3.33 (3)], we conclude that $\eta_{i,u_i}\in \pa Y_{u_i}$ admits a flow to $\eta_{i,v}\in \pa Y_v$. If $\al_i$ and $\bt_i$ denote geodesic rays in $Y_{u_i}$ and $Y_v$ respectively representing $\eta_{i,u_i}$ and $\eta_{i,v}$, then $Hd_Y(\al_i,\bt_i)<\infty$ (\cite{ps-conical}[Lemma 4.4]).
        Hence $\eta_i =\pa i_{Y_{u_i},Y}(\eta_{i,u_i}) = \pa i_{Y_v,Y}(\eta_{i,v})$ by \cite{ps-krishna}[Lemma 2.55].
        In particular, $A\subseteq \pa i_{Y_v,Y}(\pa i^{-1}_{Y_v,X_v}(\pa i^{-1}_{X_v,X}(\xi)))$.
        It follows that $$|A|\le |\pa i^{-1}_{Y_v,X_v}(\pa i^{-1}_{X_v,X}(\xi))|\le N'\cdot N_{\ref{thm-finite to one TMS ray}}.$$
    \end{proof}

As an application of Theorem \ref{thm-CT subtree of subspaces}, we obtain the natural and obvious analog of Corollaries~\ref{thm-gog vertex} and
\ref{cor-gog-subtree} in the present setup.

The relatively hyperbolic version of Theorem~\ref{thm-main} is still open. We conclude with the following general question.

\begin{question}
    For a hyperbolic group $G$ acting on a proper hyperbolic metric space $X$ without parabolics, assume that the Cannon--Thurston map $\partial i_{G,X}
    : \partial G \to \partial X$ exists.
    Is $\partial i_{G,X}$ always finite-to-one?

    More generally, let $G$ be a relatively hyperbolic group acting on $X$ such that the action is type-preserving. Suppose that the Cannon--Thurston map $\partial i_{G,X}$ exists from the Bowditch boundary of $G$ to $\partial X$.
    Is $\partial i_{G,X}$  finite-to-one away from parabolics?
\end{question}

\newcommand{\etalchar}[1]{$^{#1}$}
\providecommand{\bysame}{\leavevmode\hbox to3em{\hrulefill}\thinspace}
\providecommand{\MR}{\relax\ifhmode\unskip\space\fi MR }
\providecommand{\MRhref}[2]{%
	\href{http://www.ams.org/mathscinet-getitem?mr=#1}{#2}
}
\providecommand{\href}[2]{#2}

\end{document}